\documentclass[12pt]{iopart}
%Uncomment next line if AMS fonts required
\usepackage{ulem}
\usepackage{xcolor}
\usepackage{iopams}  
\usepackage{setstack}
\usepackage{graphicx}
%\eqnobysec
%\input{mathmacros.tex}
\newcommand{\ind}{\,\mbox{d}}
\usepackage{bbm}
\usepackage{fullpage}
\newtheorem{lemma}{Lemma}
\newtheorem{corollary}{Corollary}
\newtheorem{prop}{Proposition}
\newtheorem{remark}{Remark}
\newtheorem{theorem}{Theorem}

\newcommand{\proofend}{\hfill $\Box$ \vspace{2mm}}

\begin{document}

\title[Shape-parameter identification of linear sampling method]{Shape and parameter identification by the linear sampling method for a restricted Fourier integral operator}

\author{Lorenzo Audibert$^{1,2}$~and~Shixu Meng$^3$}

\address{$^1$ PRISME, EDF R\&D, 6 Quai Watier, 78400 Chatou, France.}
\address{$^2$ UMA, Inria, ENSTA Paris, Institut Polytechnique de Paris, 91120 Palaiseau, France. }
\address{$^3$ Department of Mathematics, Virginia Tech, 24061
Blacksburg,  USA.}
\eads{\mailto{lorenzo.audibert@edf.fr}, \mailto{sgl22@vt.edu}}

%\vspace{10pt}
%\begin{indented}
%\item[] September 12, 2023
%\end{indented}

\begin{abstract}
In this paper we provide a new linear sampling method based on the same data but a different definition of the data operator for two inverse problems: the multi-frequency inverse source problem for a fixed observation direction and the Born inverse scattering problems. We show that the associated regularized linear sampling indicator converges  to the average of the unknown in  a small neighborhood as the regularization parameter approaches to zero.
We develop both a shape identification theory and a parameter identification theory  which are stimulated, analyzed, and implemented with the help of the  prolate spheroidal wave functions and their generalizations.  We further propose a prolate-based implementation of the linear sampling method and provide numerical experiments to demonstrate how this linear sampling method is capable of reconstructing both the shape  and the parameter.
\end{abstract}

%
% Uncomment for keywords
\vspace{2pc}
\noindent{\it Keywords}: linear sampling method, shape  identification, parameter identification, inverse source and scattering problems,  prolate spheroidal wave functions.
%
% Uncomment for Submitted to journal title message

%\submitto{\IP}
%
% Uncomment if a separate title page is required
%\maketitle
% 
% For two-column output uncomment the next line and choose [10pt] rather than [12pt] in the \documentclass declaration
%\ioptwocol
%

%%%%%%%%%%%%%%%%%%%%%%%%%%%%%%%%%%
\section{Introduction}  \label{section intro}
Inverse scattering and inverse source problems play important roles in geophysical exploration, non-destructive testing, medical diagnosis, and numerous problems associated with shape and parameter identification. The \textit{linear sampling method}, first proposed in \cite{ColtonKirsch96}, is a non-iterative imaging method for shape identification. It requires little a priori information (such as boundary conditions  and the number of connected components) about the object, provides a direct computational implementation, and  is robust to noise. 
The \textit{factorization method} proposed later in  \cite{Kirsch98} gives a complete theory for shape identification and also provides a direct implementation. With the help of the factorization method, a complete theoretical justification of the linear sampling method was given by \cite{arens04,arenslechleiter09} and was further developed in   \cite{arenslechleiter15} (since they called their formulation as an alternative formulation of the linear sampling method so we simply refer to their method as the \textit{alternative linear sampling method} for convenience in this paper).  The \textit{generalized linear sampling method} proposed in \cite{audiberthaddar15} modifies the regularizer and also provides a complete theoretical justification of the linear sampling method with the minimal restriction.  The linear sampling and the factorization methods play   important roles in inverse problems associated with shape identification such as inverse scattering problem and electrical impedance tomography. We refer to the monographs \cite{cakoni2016qualitative,cakoni2016inverse,colton2012inverse,kirsch2008factorization} for a more comprehensive discussion.

In this paper we provide a linear sampling type method for shape identification based on a different definition of the data operator  and show that the indicator represents an average of the unknown which leads to  parameter identification/estimation. The demonstration in this paper is in the context of the multi-frequency inverse source problem for a fixed observation direction and the Born inverse scattering problems. Part of the motivations are due to the numerical examples performed by \cite{meng22} on a multi-frequency inverse source problem in waveguides and the data-driven basis based on  prolate spheroidal wave functions (\textit{PSWFs}) and their generalizations  in \cite{meng23data}.
In this paper we use the PSWFs to analyze and implement the linear sampling method. 
In a broader context, it is noted that there are many existing results on parameter identification for the multi-frequency inverse source problem  and the Born inverse scattering problem, such as \cite{Kirsch17,meng23data,moskowSchotland08,natterer2001book}  and diffraction tomography \cite{Kirisitsetal2021,quellmalzetal2023} and the numerous references therein.  In the recent paper \cite{novikov22}, it was shown that the convergence for parameter identification for the inverse source problem on the ball is of  H\"{o}lder-logarithmic type where their analysis was based on the PSWFs in one dimension and the Radon transform.

\emph{Shape identification}. Our formulation of the linear sampling method is based on solving the usual data equation for a slightly different data operator  by a general regularization scheme  which gives an indicator function that allows to characterize the shape.
This indicator function is similar to an alternative linear sampling method proposed in \cite{arenslechleiter15} and is closely related to the generalized linear sampling method in \cite{audiberthaddar15}, and as is noted in \cite{arenslechleiter15}, this formulation can be dated back to the first paper of linear sampling method \cite{ColtonKirsch96} where they suggested an alternative indicator function. Our investigation is in the context of the multi-frequency inverse source problem for a fixed observation direction and the Born inverse scattering problems. It is worth noting that the multi-frequency factorization method for the inverse source problem was initially studied  in \cite{GriesmaierSchmiedecke-source} that motives our present study. In this paper, we first obtain the shape identification result, based on the assumption that, roughly speaking, the factorization method theory applies. Several  remarks: (1) We   obtain  shape identification theories based on the alternative linear sampling method and the generalized linear sampling method. Essentially the factorization method, the alternative linear sampling method, and the generalized linear sampling method are all capable of shape and parameter identification in our case. (2) The regularized solution $g_{z,\alpha}$ can be obtained using any general regularization scheme (which can be the standard Tikhonov or the singular value cut off regularization) so that it is a general shape identification theory, as is similar to \cite{arenslechleiter15}.

\emph{Parameter identification}.   The parameter identification is based on  the same  indicator function and we  show that one can reconstruct the average of $1/q$ over a small user-defined region (where $q$ is the unknown parameter). This result is quite general and relates in the classical setting of the LSM to the convergence of solution to the usual data equation to a specific function depending on $q$. We provide a proof in this setting. Additionally we utilize the PSWFs to prove again the parameter identification theory. This result is stimulated by our efforts to use PSWFs as a tool for both the analysis and the implementation and demonstrate the relevance of PSWFs as it is striking, at least to us, that one could use a basis independant of $q$ to obtains such a result. Again the parameter identification theory is a general theory, since the regularized solution $g_{z,\alpha}$ can be obtained using any general regularization scheme.

Prolate spheroidal wave functions (PSWFs) and their generalizations. The PSWFs and their generalizations in the context of the restricted Fourier integral operator were studied in \cite{Slepian61,Slepian64,Slepian78} and play important roles in Fourier analysis, uncertainty quantification, and signal processing. Their remarkable property is due to that the PSWFs are eigenfunctions of a restricted Fourier integral operator (which is one of the factorized operator associated with the data operator)  and of a Sturm-Liouville differential operator at the same time. The generalizations of PSWFs in two dimension are referred to as the disk PSWFs. 
For a more complete picture on the theory and computation of the (disk) PSWFs   we refer to \cite{boyd05code,wang10,ZLWZ20} and the numerous references therein. For extension to domain that are not disk, however with less theoretical results, we refer to \cite{Simons1} and references therein. Recently, (disk) PSWFs were applied to the inverse source problem in \cite{novikov22} and to the Born inverse scattering problems in \cite{meng23data}.

Implementation of the linear sampling method. In addition to  our general theory on  shape and parameter identification of the linear sampling and factorization methods, we propose a prolate-based formulation of the linear sampling method.   The key observation is that one of the factorized operator has (disk) PSWFs as its eigenfunctions. In this way we obtain a reduced indicator function in a high dimensional subspace. For sake of rigor and completeness, we give the full details on the computation of the PSWFs and their corresponding prolate eigenvalues that are needed in our prolate-based linear sampling method.

The remaining of the paper is as follows. We first introduce in Section \ref{Section Model} an inverse source problem for a fixed observation direction and the Born inverse scattering problem, and summarize these two inverse problems into an inverse problem associated with a restricted Fourier integral operator in Section \ref{Model all cases}. For later purposes, we also introduce the (disk) PSWFs  that stimulate our analysis and computation. Section \ref{Section operator} introduces the data operator, analyzes its factorizations and shows a range identity. In Section \ref{Section FM and a LSM}, we   obtain the shape identification theories based on the alternative linear sampling method and the generalized linear sampling method with the help of the factorization method theory.  Section \ref{Section parameter LSM} is devoted to the general parameter identification theory. We prove  that our indicator function has capability in reconstructing the   average of $1/q$ over a small user-defined region (where $q$ is the unknown parameter).  In Section \ref{Section numerical preliminaries} we study an explicit example to discuss the nature of the inverse problem followed by several preliminaries on the computation of PSWFs and the Legendre-Gauss-Lobatto quadrature. We also propose the prolate-based formulation of the linear sampling method. Finally, Section \ref{Section numerical examples} is devoted to numerical experiments that illustrate the shape and parameter identification theory.
 
%%%%%%%%%%%%%%%%%%%%
\section{The Mathematical Model for Inverse Source and Born Inverse Scattering Problems} \label{Section Model}

In this section, we first introduce an inverse source problem for a fixed observation direction and the Born inverse scattering problem. We then summarize these two inverse problems in Section \ref{Model all cases}.
%%%%%%%%%%%%%%%%%%
\subsection{Multi-frequency inverse source problem  for a fixed observation direction} \label{Model inverse source fixed observation}
In this section we introduce the  inverse  source problem with  multi-frequency data measured at sparse observation directions.
Note that  the notations in this section are only for the purpose of introducing the  inverse source model which leads to a model given later by \eref{Section Model general}, thus remain relevant only in this section.   
When considering the acoustic wave propagation due to a source $f$ in an homogeneous isotropic medium in $\mathbb{R}^d$ ($d=2$), 
one has the nonhomogeneous Helmholtz equation
\begin{equation*}\label{Helmholtzequation}
\Delta u^s +k^2 u^s = -f \quad \mbox{in}\,\,\mathbb{R}^2,
\end{equation*}
where the  wave number is denoted by $k$,  the support of the unknown source is denote by $\Omega$ which is a bounded Lipschitz domain in $\mathbb{R}^2$ with connected complement $\mathbb{R}^2\backslash \overline{\Omega}$. We suppose that the support of $f$ is a subset of $B(0,L)$. 
The scattered field $u^s$ is required to satisfy the Sommerfeld radiation condition
\begin{equation*} 
\lim_{r=|x|\rightarrow \infty} \sqrt{r} \Big(\frac{\partial u^s}{\partial r} - iku^s \Big) = 0
\end{equation*}
uniformly for all directions $x/|x|$. It is known that
\begin{equation*} 
u^{s}(x,k)
=\frac{e^{i\frac{\pi}{4}}}{\sqrt{8k\pi}} \frac{e^{ikr}}{\sqrt{r}}\left\{u^{\infty}(\hat{x},k)+\mathcal{O}\left(\frac{1}{r}\right)\right\}\quad\mbox{as }\,r=|x|\rightarrow\infty,
\end{equation*}
and that 
(see, for instance, \cite{cakoni2016qualitative})
\begin{equation}\label{Section Model inverse source data1}
u^{\infty}(\hat{x},k)=\int_{\Omega}e^{-ik\hat{x}\cdot\,y}f(y) \ind y,
\end{equation}
where $\hat{x}$ is an observation direction belonging to $\mathbb{S}$ which denotes the unit circle, and the wavenumber $k$ belongs to the interval $(0,K)$ with $K>0$. Decomposing $s:=y\cdot \hat{x}$ and 
$y^\perp_{\hat{x}}:=y - s\hat{x}$, and identify $\hat{x}^\perp$ as the line orthogonal to $\hat{x}$, we proceed to 
\begin{equation*} 
u^{\infty}(\hat{x},k)=\int_{\mathbb{R}} e^{-ik s } \left(\int_{\hat{x}^\perp}f(s \hat{x} + y^\perp_{\hat{x}}) \ind y^\perp_{\hat{x}} \right) \ind s = \int_{\mathbb{R}} e^{-ik s } \mathcal{R} f (\hat{x},s) \ind s,
\end{equation*}
where $\mathcal{R} f (\hat{x},s)=\int_{\hat{x}^\perp}f(s \hat{x} + y^\perp_{\hat{x}}) \ind y^\perp_{\hat{x}}$ is the Radon transform (see, for instance,  \cite{natterer2001book}).

Note
  that
$$
u^{\infty}(\hat{x}, -k) = u^{\infty}(-\hat{x}, k), \quad k\in (0,K),
$$
then the knowledge of $\{u^{\infty}(\hat{x}, k): k\in (0,K)\} \cup \{u^{\infty}(-\hat{x}, k): k\in (0,K)\}$ amounts to the knowledge of $\{u^{\infty}(\hat{x}, k): k\in (-K,K)\}$ where
\begin{equation} \label{Section Model inverse source data2}
u^{\infty}(\hat{x},k)=\int_{\mathbb{R}}e^{-ik s } \mathcal{R} f (\hat{x},s) \ind s,\quad k \in (-K,K).
\end{equation}

By appropriate scaling, one will be led to the problem \eref{Section Model general} in one dimension. In particular set $q(s):=  \mathcal{R} f (\hat{x},s)$, equation \eref{Section Model inverse source data2} yields 
\begin{eqnarray*}  
u^{\infty}(\hat{x},-Kt)&=&\int_{\mathbb{R}}e^{-i( - Kt) s }  \mathcal{R} f (\hat{x},s) \ind s=\int_{\mathbb{R}} e^{ i K ts} q(s) \ind s,\quad t\in (-1,1).
\end{eqnarray*}
Note that we have assumed that the support of $f$ is a subset of $B(0,L)$ so that  the support of $q$ is a subset of $(-L,L)$, then one is led to
\begin{eqnarray} \label{Section Model inverse source data2-1}
u(t) = \int_{-1}^1 e^{ i 2c ts} q(Ls) \ind s,\quad t\in (-1,1),
\end{eqnarray}
where $c=KL/2$ and  and $u(t):=u^{\infty}(\hat{x},-2ct)$ for $t\in (-1,1)$. The inverse problem is to determine certain information (which will be made precise later) about $q$ from the knowledge of $\{u(t): t \in (-1,1)\}$. In this way one formulate the inverse problems in the form of \eref{Section Model inverse source data2-1} which is the one dimensional case of \eref{Section Model general}.

\begin{remark}
The above inverse problem associated with \eref{Section Model inverse source data2} and \eref{Section Model inverse source data2-1}  is the multi-frequency inverse problem for a fixed observation direction.
When considering the multi-frequency inverse problem of determining $\mathcal{R}f$ (or its support) from $\{u^{\infty}(\hat{x},k): k\in (-K,K),\, \hat{x} \in \mathbb{S} \}$ with  all possible observation directions $\hat{x}$ and all the $k$ in $(0,K)$ where $u^\infty$ is given by \eref{Section Model inverse source data1}, one is led to the  problem  \eref{Section Model general} in two dimensions (after appropriate scalling) with some corresponding parameter $c$.
\end{remark}
\begin{remark}
If one is concerned with recovering $f$ or its support, results in that direction could be found in \cite{GriesmaierSchmiedecke-source}. 
\end{remark}
%%%%%%%%%%%%%%%%%%%%%%%%%%%%%
\subsection{Born inverse scattering problem} \label{Model Born inverse scattering}
In this section we introduce the Born inverse scattering problem.
Note again that the notations in this section are only for the purpose of introducing the Born inverse scattering model which leads to \eref{Section Model general}, thus remain relevant only in this section.   Let $k>0$ be the  wave number. A plane wave  takes the following form 
\begin{equation*} %\label{def plane wave}
e^{i k x \cdot \hat{\theta}}, \quad \hat{\theta} \in \mathbb{S} :=\{ x \in \mathbb{R}^2: |x|=1\},
\end{equation*}
where $\hat{\theta}$ is the direction of propagation.
Let $\Omega \subset \mathbb{R}^2$ be an open and bounded set with
Lipschitz boundary $\partial \Omega$ such that $\mathbb{R}^2 \backslash \overline{\Omega}$  is connected. The set $\Omega$ is referred to as the medium. Let the real-valued function $q(x) \in L^\infty(\Omega)$ be the contrast of the medium  and $q \ge 0$ on $\Omega$. The medium scattering due to a plane wave $e^{i k x \cdot \hat{\theta} }$ is to find total wave filed $e^{i k x \cdot \hat{\theta} } + u^s(x;\hat{\theta};k)$ belonging to $H^1_{loc}(\mathbb{R}^2)$  such that
\begin{eqnarray*}
\Delta_x \big( u^s(x;\hat{\theta};k) + e^{ikx\cdot \hat{\theta}} \big) + k^2 \left(1+q(x)\right) \big( u^s(x;\hat{\theta};k) + e^{ikx\cdot \hat{\theta}} \big) =  0 \quad &\mbox{in}& \quad \mathbb{R}^2, \quad  \\%\label{medium us eqn1}\\
\lim_{r:=|x|\to \infty} \sqrt{r}  \big( \frac{\partial u^s(x;\hat{\theta};k)}{\partial r} -ik u^s(x;\hat{\theta};k)\big) =0,  %\label{medium us eqn2}
\end{eqnarray*}
where the last equation, i.e., the Sommerfeld radiation condition, holds  uniformly for all directions (and a solution is called   radiating   if it satisfies this radiation condition). The scattered wave field is $u^s(\cdot;\hat{\theta};k)$.    This scattering problem is well-posed and there exists a unique radiating solution; see, for instance,  \cite{colton2012inverse,kirsch2008factorization}. This model is referred to as the full model.
 
 Born approximation is a widely used  method to treat inverse problems; see, for instance,  \cite{colton2012inverse,moskowSchotland08}.
 In the Born approximation region, one can approximate the solution $u^s(\cdot;\hat{\theta};k)$ by its Born approximation $u^s_b(\cdot;\hat{\theta};k)$, which is the unique radiating solution to
\begin{eqnarray*}
\Delta_x u^s_b(x;\hat{\theta};k) + k^2  u^s_b(x;\hat{\theta};k) = -k^2 q(x) e^{ikx \cdot \hat{\theta}} \quad &\mbox{in}& \quad \mathbb{R}^2. \quad  \label{medium us born eqn1}
\end{eqnarray*}
Note that (cf. \cite{cakoni2016qualitative})
\begin{equation*} 
u_b^{s}(x;\hat{\theta};k)
=\frac{e^{i\frac{\pi}{4}}}{\sqrt{8k\pi}} \frac{e^{ikr}}{\sqrt{r}}\left\{u_b^{\infty}(\hat{x};\hat{\theta};k)+\mathcal{O}\left(\frac{1}{r}\right)\right\}\quad\mbox{as }\,r=|x|\rightarrow\infty,
\end{equation*}
uniformly with respect to all directions $\hat{x}:=x/|x|\in\mathbb{S}$, we arrive at $u_b^{\infty}(\hat{x};\hat{\theta};k)$ which is known as the scattering amplitude or  far field pattern with $\hat{x}\in\mathbb{S}$ denoting the observation direction. It directly follows from \cite{cakoni2016qualitative} that
\begin{eqnarray} \label{born uinfty integral representation}
u_b^{\infty}(\hat{x};\hat{\theta};k) = k^2 \int_\Omega  e^{-ik \hat{x} \cdot p'} q(p') e^{ik p' \cdot \hat{\theta}} \ind p',
\end{eqnarray}
therefore the knowledge of $\{u_b^{\infty}(\hat{x};\hat{\theta};k) : \hat{x},\hat{\theta} \in \mathbb{S} \}$ amounts to   the knowledge of $\{u_{b,filter}^{\infty}(p;k): p \in B(0,2)\}$ 
where $u_{b,filter}^{\infty}(p;k)$ is  a truncated Fourier transform of $q$ given by 
\begin{eqnarray} \label{FM near field case 2layer filtered data def 1 born volume eqv}
u_{b,filter}^{\infty}(p;k)  := \int_\Omega e^{i k p\cdot p'} q(p')   \ind p',
\end{eqnarray}
and $B(0,2)$ is a disk centered at origin with radius $2$.
This equivalent formulation is due to \eref{born uinfty integral representation} and that $B(0,2)$ is the interior of  $\{ \hat{\theta}-\hat{x}:\hat{x}, \hat{\theta} \in \mathbb{S}\}$. 

Similar to Section \ref{Model inverse source fixed observation} by introducing $L$ such that $\Omega\subset B(0,L)$, one can reformulate the above problem \eref{FM near field case 2layer filtered data def 1 born volume eqv} to  \eref{Section Model general} with some corresponding parameter $c = Lk$ after some scaling. We omit the details. 
%%%%%%%%%%%%%%%%%%%%%%%%%%%%%%%%%%%%
\subsection{A model that summarizes the inverse source and Born inverse scattering problem}  \label{Model all cases}

The inverse source problem for a fixed observation direction in Section \ref{Model inverse source fixed observation} and the Born inverse scattering problem in Section \ref{Model Born inverse scattering} can be summarized as follows: for an unknown function $q$, we consider determination of $q$ and its support   given the available data $u$ (and their perturbations which are called the noisy data) where
\begin{equation} \label{Section Model general}
u (x) = \int_{B} e^{i 2c x\cdot y} q(y) \ind y, \quad x \in B \subset \mathbb{R}^d,
\end{equation}
with $d=1,2$ and $B:=B(0,1)$ denoting the unit interval/disk in $\mathbb{R}^d$. This corresponds to the knowledge of a restricted Fourier transform. Here the unknown function $q \in L^\infty(\mathbb{R}^d)$ has compact support $\Omega \subset B$, and $\Omega$ denotes an open and bounded set with Lipschitz boundary $\partial \Omega$ such that $\mathbb{R}^d \backslash \overline{\Omega}$  is connected. The parameter $c$ is a positive constant that is given by the model (cf. Section \ref{Model inverse source fixed observation}).

In this paper we consider  two classical inverse problems using the linear sampling method for a new data operator based on $u$ instead of $u^\infty_b$: determination of the support of $q$ and  determination of the function $q$. The inverse problem of determining   the support of $q$ is referred to as shape identification and the one of determining the function $q$ is referred to as parameter identification.
%%%%%%%%%%%%%%%%%%%%%%%%%%%%%%%%%%
\subsection{PSWFs and their generalization}
For later purposes we introduce the PSWFs and their generalizations which stimulate our analysis and computation. In one dimension $d=1$, the  PSWFs  \cite{Slepian61} are $\psi_n(x;c)$ that are   eigenfunctions of $\mathcal{F}^c_d$ where
\begin{equation} \label{Section PSWF multi-d psi_n def}
 \mathcal{F}^c_d \psi_n(\cdot;c)  =  \lambda_n(c) \psi_n(\cdot;c), \quad n \in \mathbbm{N}_d:=\{0,1,\cdots\}, \quad d=1,
\end{equation}
and (we choose to normalize the eigenfunctions so that)
\begin{equation*} %\label{Section PSWF multi-d psi_m psi_n orthogonality}
\int_B \psi_m(x;c) \psi_n(x;c) \ind x = \delta_{mn}, \quad  m,n \in \mathbbm{N}_d,
\end{equation*}
here $\mathcal{F}^c_d$ denotes the operator $L^2(B) \to L^2(B)$ given by
\begin{equation} \label{Section PSWFs Fcd def}
\mathcal{F}^c_d(g) (x) := \int_{B} e^{i  c x\cdot y} g(y) \ind y , \quad x \in B \subset \mathbb{R}^d, \quad \forall g \in L^2(B).
\end{equation}
In two dimensions, the corresponding normalized eigenfunctions are related to the {\it generalized PSWFs} (specifically the radial part of $\psi_n$ is called the generalized PSWFs according to \cite{Slepian64}; in this paper we simply refer  to $\psi_n$ in two dimensions as the disk PSWFs for convenience). As such, $\psi_n(\cdot;c)$ is referred to as the (disk) PSWFs in dimension $d=1,2$. Note that in two dimensions the indexes in \eref{Section PSWF multi-d psi_n def}-\eref{Section PSWFs Fcd def} are multiple indexes given by \eref{Section PSWF multi-d Nd def} where 
\begin{equation} \label{Section PSWF multi-d Nd def}
\hspace{-5em}\mathbbm{N}_d:=\big\{(m,n,\ell): m,n \in \{0,1,2,\cdots\}, \ell \in \mathbbm{I}(m)\big\}, \, \mathbbm{I}(m) :=
\left\{
\begin{array}{cc}
\{1\}  & m=0     \\
\{1,2\}  &     m\ge 1 
\end{array}
\right.  , \, d=2.
\end{equation}
Note that the eigenfunctions $\{\psi_n\}_{n=0}^\infty$ are real-valued, analytic, orthonormal,  and complete in $L^2(B)$ in both one dimension and two dimensions. All the prolate eigenvalues $\lambda_n(c)$ are non-zero. For more details we refer to \cite{boyd05code,Slepian61,wang10} for the one dimensional case and \cite{meng23data,Slepian64,ZLWZ20} for the two dimensional case.

 %%%%%%%%%%%%%%%%%%%%%%%

 %%%%%%%%%%%%%%%%%%%%%%%

\section{Data operator, factorization, and range identity}\label{Section operator}
In this section we introduce a data operator defined by the given data \eref{Section Model general}, and study its factorization and a range  identity. Following \cite{GriesmaierSchmiedecke-source} (see also \cite{meng22}),  we introduce the data operator $\mathcal{N}: L^2(B) \to L^2(B)$ by
\begin{equation} \label{Section operator N def}
\left( \mathcal{N} g \right)(t) := \int_{B} u\Big(\frac{t-s}{2} \Big) g(s) \ind s,
\end{equation}
where the kernel is given by the data  \eref{Section Model general}. Note that the data $u$ are functions in $L^2(B)$. 

The above data operator enjoys a factorization as follows. Introduce $\mathcal{S}_\Omega: L^2(B) \to L^2(\Omega)$ by 
\begin{equation} \label{Section operator S_omega def}
\left( \mathcal{S}_\Omega g \right)(y) := \int_{B} e^{-i cs \cdot y} g(s) \ind s, \quad \forall g \in L^2(B), \quad y \in \Omega,
\end{equation}
and it follows directly that its adjoint $\mathcal{S}^*_\Omega: L^2(\Omega) \to L^2(B)$ is given by
\begin{equation} \label{Section operator S*_omega def}
\left( \mathcal{S}^*_\Omega h \right)(t) := \int_{\Omega} e^{i c t \cdot y} h(y) \ind y, \quad \forall h \in L^2(\Omega), \quad t \in B
\end{equation}
which is dictated by $\langle \mathcal{S}^*_\Omega h,g \rangle_{L^2(B)} =  \langle  h, \mathcal{S}_\Omega g \rangle_{L^2(\Omega)}$. Here $\langle , \rangle_{L^2(D)}$ represents the $L^2(D)$ inner product with conjugation in the second function, and we further denote $\|\cdot\|_{L^2(D)}$ the corresponding $L^2(D)$ norm. From now on we drop the subscript $L^2(B)$ when the inner product is in $L^2(B)$ and will explicitly indicate a subscript for other cases. Another operator $\mathcal{T}_\Omega$ is needed for the factorization, namely $\mathcal{T}_\Omega: L^2(\Omega) \to L^2(\Omega)$ which is given by 
\begin{equation} \label{Section operator T_omega def}
 \mathcal{T}_\Omega f   :=q f, \quad \forall f \in L^2(\Omega).
\end{equation}

Now we are ready to prove the factorization theorem.
\begin{theorem} \label{Section operator Omega fac theorem}
Let the data operator $\mathcal{N}: L^2(B) \to L^2(B)$ be given by \eref{Section operator N def}. Then it holds that
\begin{equation*}
\mathcal{N} = \mathcal{S}^*_\Omega \mathcal{T}_\Omega \mathcal{S}_\Omega
\end{equation*}
where $\mathcal{S}_\Omega$, $\mathcal{S}^*_\Omega$, and $\mathcal{T}_\Omega$ are given by \eref{Section operator S_omega def}, \eref{Section operator S*_omega def}, and \eref{Section operator T_omega def}, respectively. 
\end{theorem}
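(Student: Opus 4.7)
The plan is to verify the identity by direct computation: expand the right-hand side $\mathcal{S}^*_\Omega \mathcal{T}_\Omega \mathcal{S}_\Omega g$ using the definitions \eref{Section operator S_omega def}--\eref{Section operator T_omega def}, interchange the order of integration, and then recognize the inner integral as precisely $u((t-s)/2)$ as given by \eref{Section Model general}. No serious analytic machinery is needed; the whole argument is essentially three lines of Fubini.

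First I would apply $\mathcal{S}_\Omega$ to $g \in L^2(B)$, giving the function $y \mapsto \int_B e^{-i c s \cdot y} g(s)\ind s$ on $\Omega$. Multiplying by $q$ (the action of $\mathcal{T}_\Omega$) produces the function $y \mapsto q(y) \int_B e^{-i c s \cdot y} g(s) \ind s$. Applying $\mathcal{S}^*_\Omega$ then yields, for $t \in B$,
\begin{equation*}
\bigl(\mathcal{S}^*_\Omega \mathcal{T}_\Omega \mathcal{S}_\Omega g\bigr)(t) = \int_\Omega e^{i c t \cdot y} q(y) \left( \int_B e^{-i c s \cdot y} g(s) \ind s \right) \ind y.
\end{equation*}

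Next I would justify interchanging the two integrations. Since $\Omega$ and $B$ are bounded, $q \in L^\infty(\Omega)$, and $g \in L^2(B) \subset L^1(B)$, the integrand is in $L^1(\Omega \times B)$ and Fubini applies. Interchanging gives
\begin{equation*}
\bigl(\mathcal{S}^*_\Omega \mathcal{T}_\Omega \mathcal{S}_\Omega g\bigr)(t) = \int_B g(s) \left( \int_\Omega e^{i c (t-s) \cdot y} q(y) \ind y \right) \ind s.
\end{equation*}
Now I would identify the inner integral. Because $q$ is supported in $\Omega \subset B$, the defining formula \eref{Section Model general} for $u$ yields
\begin{equation*}
u\!\left(\tfrac{t-s}{2}\right) = \int_B e^{i 2 c \frac{t-s}{2} \cdot y} q(y) \ind y = \int_\Omega e^{i c (t-s) \cdot y} q(y) \ind y,
\end{equation*}
so the inner integral equals $u((t-s)/2)$. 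Substituting back produces
\begin{equation*}
\bigl(\mathcal{S}^*_\Omega \mathcal{T}_\Omega \mathcal{S}_\Omega g\bigr)(t) = \int_B u\!\left(\tfrac{t-s}{2}\right) g(s) \ind s = (\mathcal{N} g)(t),
\end{equation*}
which is the desired factorization.

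There is no real obstacle here; the only subtlety is the bookkeeping to make sure the factor of $2$ in the argument of $u$ matches the factor of $c$ (as opposed to $2c$) in the kernels of $\mathcal{S}_\Omega$ and $\mathcal{S}^*_\Omega$, and that the support of $q$ lets us silently enlarge $\Omega$ to $B$ (or restrict $B$ to $\Omega$) inside the $y$-integral. These are precisely the design choices built into the definitions of $\mathcal{S}_\Omega$, $\mathcal{S}^*_\Omega$, and $\mathcal{T}_\Omega$, so the computation goes through cleanly.
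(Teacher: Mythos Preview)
Your proof is correct and is essentially the same direct computation as the paper's, just run in the opposite direction: the paper starts from $(\mathcal{N}g)(t)$, substitutes \eref{Section Model general} for $u((t-s)/2)$, and regroups to recognize $\mathcal{S}^*_\Omega \mathcal{T}_\Omega \mathcal{S}_\Omega g$, whereas you start from $\mathcal{S}^*_\Omega \mathcal{T}_\Omega \mathcal{S}_\Omega g$ and land on $(\mathcal{N}g)(t)$. Your explicit invocation of Fubini and the remark about the factor of $2$ matching the choice of $c$ versus $2c$ in the kernels are welcome clarifications, but the underlying argument is identical.
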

\begin{proof}
From the definition \eref{Section operator N def} of $\mathcal{N}$, one gets
\begin{eqnarray*}
\left( \mathcal{N} g \right)(t) &=& \int_{B} u\Big(\frac{t-s}{2} \Big) g(s) \ind s = \int_B \int_\Omega e^{ic (t-s) \cdot y} q(y) \ind y g(s) \ind s \\
&=& \int_\Omega \Big( \Big[ \int_B e^{-i c s \cdot y} g(s) \ind s  \Big] q(y) \Big) e^{ict\cdot y} \ind y = \Big(\mathcal{S}^*_\Omega \mathcal{T}_\Omega \mathcal{S}_\Omega g \Big)(t),
\end{eqnarray*}
for any $g\in L^2(B)$. This completes the proof. 
\proofend\end{proof}
 
Several properties hold. 
\begin{prop}  \label{Section operator Omega S_Omega compact injective dense range prop}
The operator $\mathcal{S}_\Omega: L^2(B) \to L^2(\Omega)$ is compact, injective and has dense range.
The operator $\mathcal{S}^*_\Omega: L^2(\Omega) \to L^2(B)$ is compact, injective and has dense range in $L^2(B)$. 
\end{prop}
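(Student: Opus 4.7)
The proposition splits into three assertions for each operator (compactness, injectivity, dense range), but the two operators are adjoints of each other, so by the standard identity $\overline{\mathrm{Range}(T)} = \mathrm{Ker}(T^*)^\perp$ in Hilbert spaces, dense range of one follows from injectivity of the other. Compactness likewise transfers between $T$ and $T^*$. Thus the task reduces to proving (i) compactness of $\mathcal{S}_\Omega$ and (ii) injectivity of both $\mathcal{S}_\Omega$ and $\mathcal{S}_\Omega^*$.

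For compactness, I would observe that the kernel $(s,y)\mapsto e^{-ic s\cdot y}$ is smooth (in fact real-analytic) on the compact product set $\overline{B}\times\overline{\Omega}$, hence uniformly bounded, so it lies in $L^2(B\times\Omega)$. Therefore $\mathcal{S}_\Omega$ is a Hilbert--Schmidt operator $L^2(B)\to L^2(\Omega)$, in particular compact; the same computation applied to the conjugate kernel gives compactness of $\mathcal{S}_\Omega^*$.

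For injectivity of $\mathcal{S}_\Omega$, suppose $\mathcal{S}_\Omega g = 0$, and define
\begin{equation*}
F(y) := \int_B e^{-ic s\cdot y} g(s)\,\mathrm{d}s, \qquad y\in \mathbb{R}^d.
\end{equation*}
Since $g$ has compact support in $B$, the function $F$ extends to an entire function on $\mathbb{C}^d$ (by differentiation under the integral sign, or by Paley--Wiener). The hypothesis says $F\equiv 0$ on the open set $\Omega\subset\mathbb{R}^d$, so by the identity theorem for real-analytic (and entire) functions of several variables $F\equiv 0$ on $\mathbb{R}^d$. But $F$ is, up to the scaling $y\mapsto cy$, the Fourier transform of the $L^2(\mathbb{R}^d)$ function $\tilde g$ obtained by extending $g$ by zero outside $B$; injectivity of the Fourier transform gives $\tilde g = 0$, hence $g=0$ in $L^2(B)$. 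The same argument, applied with $h\in L^2(\Omega)$ extended by zero to $\mathbb{R}^d$, shows injectivity of $\mathcal{S}_\Omega^*$.

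Finally, dense range of $\mathcal{S}_\Omega$ follows because $\overline{\mathrm{Range}(\mathcal{S}_\Omega)} = \mathrm{Ker}(\mathcal{S}_\Omega^*)^\perp = \{0\}^\perp = L^2(\Omega)$, and dense range of $\mathcal{S}_\Omega^*$ follows symmetrically. The only substantive step in this plan is the entire-extension/identity-theorem argument underlying injectivity; the rest is bookkeeping. I expect no real obstacle, since the kernel's analyticity and the compact support of $g$ and $h$ make the Paley--Wiener-type extension immediate.
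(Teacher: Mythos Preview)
Your proposal is correct and follows essentially the same route as the paper: compactness from the smooth (analytic) kernel, injectivity from the fact that $\mathcal{S}_\Omega g$ is the restriction to $\Omega$ of the Fourier transform of a compactly supported function (hence real-analytic, so vanishing on the open set $\Omega$ forces it to vanish identically), and dense range from the adjoint duality $\overline{\mathrm{Range}(T)}=\mathrm{Ker}(T^*)^\perp$. Your write-up is somewhat more explicit (Hilbert--Schmidt, Paley--Wiener, identity theorem) than the paper's terse version, but the underlying argument is identical.
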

\begin{proof}
Note that the kernel $e^{ict\cdot y}$ is analytic, then both $\mathcal{S}_\Omega$ and $\mathcal{S}^*_\Omega$ are compact. Note that $\mathcal{S}_\Omega = (\mathcal{F}^c_d)^*|_{\Omega}$ and $\Omega$ has non-empty interior,  then $\mathcal{S}_\Omega$ is injective follows directly from that $(\mathcal{F}^c_d)^*\varphi$  coincide with an analytic function, namely the Fourier transform of a function, $\varphi \in L^2(\mathbb R^d)$,  with compact support in $B$. This yields that $\mathcal{S}^*_\Omega$ has dense range in $L^2(B)$. Reversing $\mathcal{S}_\Omega$ and $\mathcal{S}^*_\Omega$ in the previous arguments give the injectivity of $\mathcal{S}^*_\Omega$. This completes the proof.\proofend
\end{proof}

\begin{prop}\label{Section operator middle positive definite prop}
Assume that $q_{\# sup} \ge \Re [ e^{i\tau} q(y)] \ge q_{\# inf}$, a.e., $y \in \Omega$, for some positive constants $q_{\#inf/sup}$ and some constant phase $\tau \in (-\pi,\pi]$. Then $\Re [ e^{i\tau}\mathcal{T}_\Omega]$ is self-adjoint and positive definite.
\end{prop}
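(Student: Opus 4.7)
The plan is to exploit the fact that $\mathcal{T}_\Omega$ is simply a multiplication operator by $q$, so all computations reduce to pointwise identities on $\Omega$ plus integration against $|f|^2$.

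First I would spell out the adjoint. For any $f_1,f_2 \in L^2(\Omega)$,
\[
\langle e^{i\tau}\mathcal{T}_\Omega f_1, f_2\rangle_{L^2(\Omega)} = \int_\Omega e^{i\tau} q(y) f_1(y) \overline{f_2(y)}\,\mbox{d}y = \langle f_1, e^{-i\tau}\overline{q}\, f_2\rangle_{L^2(\Omega)},
\]
which identifies $(e^{i\tau}\mathcal{T}_\Omega)^*$ as multiplication by $e^{-i\tau}\overline{q(y)}$. Consequently
\[
\Re\bigl[e^{i\tau}\mathcal{T}_\Omega\bigr] := \tfrac{1}{2}\Bigl(e^{i\tau}\mathcal{T}_\Omega + (e^{i\tau}\mathcal{T}_\Omega)^*\Bigr)
\]
is the multiplication operator with symbol $\tfrac{1}{2}(e^{i\tau}q(y)+e^{-i\tau}\overline{q(y)})=\Re[e^{i\tau}q(y)]$, which is a real-valued bounded function on $\Omega$ under the hypothesis. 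A multiplication operator by a real-valued bounded function is automatically self-adjoint, which settles the first assertion.

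For positive definiteness, I would simply test against an arbitrary $f \in L^2(\Omega)$:
\[
\bigl\langle \Re[e^{i\tau}\mathcal{T}_\Omega] f, f\bigr\rangle_{L^2(\Omega)} = \int_\Omega \Re[e^{i\tau}q(y)]\,|f(y)|^2\,\mbox{d}y \;\ge\; q_{\#inf} \|f\|^2_{L^2(\Omega)},
\]
using the lower pointwise bound $\Re[e^{i\tau}q(y)]\ge q_{\#inf} > 0$ almost everywhere on $\Omega$. This yields strict coercivity with constant $q_{\#inf}$, hence positive definiteness.

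There is essentially no obstacle here: once one recognizes that $\mathcal{T}_\Omega$ is a multiplication operator and that the real-part construction commutes with the symbol, the proof is a one-line pointwise inequality integrated against $|f|^2$. The upper bound $q_{\#sup}$ is not even needed for this proposition (it will matter later for boundedness in the range identity); only the positive lower bound is used.
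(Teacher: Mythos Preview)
Your proof is correct and follows essentially the same approach as the paper: both identify $\Re[e^{i\tau}\mathcal{T}_\Omega]$ as the multiplication operator by the real-valued symbol $\Re[e^{i\tau}q(y)]$, conclude self-adjointness from that, and then obtain the coercivity bound $\langle \Re[e^{i\tau}\mathcal{T}_\Omega]f,f\rangle_{L^2(\Omega)}\ge q_{\#inf}\|f\|_{L^2(\Omega)}^2$ directly from the pointwise lower bound. Your added remark that the upper bound $q_{\#sup}$ is not needed here is accurate.
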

\begin{proof}
From the definition \eref{Section operator T_omega def} of $\mathcal{T}_\Omega$, one has for any $f \in L^2(\Omega)$
\begin{eqnarray*}
\Re [ e^{i\tau}\mathcal{T}_\Omega] f = \frac{e^{i\tau}\mathcal{T}_\Omega + (e^{i\tau}\mathcal{T}_\Omega)^*}{2} f = \frac{e^{i\tau} q f +e^{-i\tau} \overline{q} f}{2} = \Re [ e^{i\tau} q] f,
\end{eqnarray*}
 then $\Re [ e^{i\tau}\mathcal{T}_\Omega]$ is self-adjoint and
 \begin{eqnarray*}
\langle \Re [ e^{i\tau}\mathcal{T}_\Omega] f,f \rangle_{L^2(\Omega)} \ge q_{\#inf} \|f\|^2_{L^2(\Omega)}
\end{eqnarray*}
which completes the proof.
\proofend\end{proof}

To proceed with the factorization method and linear sampling method, one works with another operator $\mathcal{N}_\#: L^2(B) \to L^2(B)$ given by
$
\mathcal{N}_\# = \Re [ e^{i\tau} \mathcal{N}].
$
To conveniently illustrate how the linear sampling and factorization methods go beyond shape identification, we choose to work in the case that 
\begin{equation} \label{Section parameter q assumption}
q_{\sup} \ge q \ge q_{\inf}>0 \quad \mbox{a.e. in } \Omega
\end{equation}
for some positive constants $q_{inf/sup}$. This is assumed throughout the remaining of this paper.

Note that $\mathcal{N}$ is self-adjoint and positive definite due to assumption \eref{Section parameter q assumption}. We now state the following lemma on range identity.

\begin{lemma}\label{Section operator Omega range characterization prop}
Assume that \eref{Section parameter q assumption} holds. Then it follows that
$Range(\mathcal{S}^*_\Omega) = Range(\mathcal{N}^{1/2})$.
\end{lemma}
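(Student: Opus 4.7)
The plan is to reduce the statement to the classical range identity for operators of the form $\mathcal{A}^*\mathcal{A}$, by symmetrizing the factorization $\mathcal{N} = \mathcal{S}^*_\Omega \mathcal{T}_\Omega \mathcal{S}_\Omega$ established in Theorem \ref{Section operator Omega fac theorem}. The positivity assumption \eref{Section parameter q assumption} is crucial precisely because it allows us to take a bounded square root of $\mathcal{T}_\Omega$ that is itself boundedly invertible.

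First I would observe that since $0 < q_{\inf} \le q \le q_{\sup}$ a.e.\ on $\Omega$, the function $\sqrt{q}$ is bounded above and bounded below away from $0$ on $\Omega$. Hence the multiplication operator $\mathcal{T}_\Omega^{1/2}: L^2(\Omega) \to L^2(\Omega)$ defined by $f \mapsto \sqrt{q}\, f$ is bounded, self-adjoint, positive definite, and admits the bounded inverse $f \mapsto f/\sqrt{q}$. In particular it is a bijection on $L^2(\Omega)$, and $\mathcal{T}_\Omega = (\mathcal{T}_\Omega^{1/2})^2$. Setting $\mathcal{A} := \mathcal{T}_\Omega^{1/2}\mathcal{S}_\Omega : L^2(B) \to L^2(\Omega)$, Theorem \ref{Section operator Omega fac theorem} rewrites as
\[
\mathcal{N} \;=\; \mathcal{S}_\Omega^{*}\,\mathcal{T}_\Omega^{1/2}\,\mathcal{T}_\Omega^{1/2}\,\mathcal{S}_\Omega \;=\; \mathcal{A}^{*}\mathcal{A},
\]
so $\mathcal{N}$ is manifestly self-adjoint and non-negative (which was also noted in the lead-up to the statement).

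Next I would invoke the classical range identity: for any bounded operator $\mathcal{A}$ between Hilbert spaces, $\mathrm{Range}(\mathcal{A}^{*}) = \mathrm{Range}\bigl((\mathcal{A}^{*}\mathcal{A})^{1/2}\bigr)$. The proof is the polar decomposition $\mathcal{A} = U\,(\mathcal{A}^{*}\mathcal{A})^{1/2}$ with partial isometry $U$ whose initial space is $\overline{\mathrm{Range}(\mathcal{A}^{*})}$; from $\mathcal{A}^{*} = (\mathcal{A}^{*}\mathcal{A})^{1/2}\,U^{*}$ one gets the inclusion $\subseteq$, and from $U^{*}U$ acting as the identity on $\overline{\mathrm{Range}((\mathcal{A}^{*}\mathcal{A})^{1/2})}$ one obtains $(\mathcal{A}^{*}\mathcal{A})^{1/2}y = \mathcal{A}^{*}Uy$, giving the reverse inclusion. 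Applied to our $\mathcal{A}$, this yields
\[
\mathrm{Range}(\mathcal{N}^{1/2}) \;=\; \mathrm{Range}(\mathcal{A}^{*}) \;=\; \mathrm{Range}\bigl(\mathcal{S}_\Omega^{*}\,\mathcal{T}_\Omega^{1/2}\bigr).
\]

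Finally, because $\mathcal{T}_\Omega^{1/2}$ is a bijection on $L^2(\Omega)$, one has $\mathrm{Range}(\mathcal{S}_\Omega^{*}\,\mathcal{T}_\Omega^{1/2}) = \mathrm{Range}(\mathcal{S}_\Omega^{*})$, which is the claim. The only real technical point is the polar-decomposition identity $\mathrm{Range}(\mathcal{A}^{*}) = \mathrm{Range}((\mathcal{A}^{*}\mathcal{A})^{1/2})$; this is standard in the factorization-method literature (see e.g.\ the range identity in \cite{kirsch2008factorization}), so in the write-up I would either cite it or sketch the polar-decomposition argument above. Everything else — the boundedness and invertibility of $\mathcal{T}_\Omega^{1/2}$, the symmetric factorization, and the identification of ranges through a bijection — is essentially bookkeeping enabled by the lower bound $q \ge q_{\inf} > 0$.
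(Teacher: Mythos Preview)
Your proof is correct and essentially the same as the paper's: the paper simply cites \cite[Corollary 1.22]{kirsch2008factorization} (together with Proposition \ref{Section operator Omega S_Omega compact injective dense range prop}), while you unpack that range identity explicitly via the square root $\mathcal{T}_\Omega^{1/2}$ and polar decomposition of $\mathcal{A}=\mathcal{T}_\Omega^{1/2}\mathcal{S}_\Omega$. Your argument is a bit more self-contained and in fact shows that the injectivity/dense-range properties of Proposition \ref{Section operator Omega S_Omega compact injective dense range prop} are not actually needed for this particular lemma, only the bounded invertibility of $\mathcal{T}_\Omega^{1/2}$ coming from \eref{Section parameter q assumption}.
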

\begin{proof}
Note that the middle operator $\mathcal{T}_\Omega$ is positive definite and self-adjoint, then the proof follows from \cite[Corollary 1.22]{kirsch2008factorization} and Proposition \ref{Section operator Omega S_Omega compact injective dense range prop}.
\proofend\end{proof}
\begin{remark}
The above factorization in Theorem \ref{Section operator Omega fac theorem} gives a factorization of the data operator. One can get another factorization of the data operator as follows. 
Introduce $\mathcal{S}: L^2(B) \to L^2(B)$ by 
\begin{equation} \label{Section operator S def}
\left( \mathcal{S} g \right)(y) := \int_{B} e^{-i cs \cdot y} g(s) \ind s, \quad \forall g \in L^2(B), \quad y \in B,
\end{equation}
and it follows directly that its adjoint $\mathcal{S}^*: L^2(B) \to L^2(B)$ is given by
\begin{equation} \label{Section operator S* def}
\left( \mathcal{S}^* h \right)(t) := \int_{B} e^{i c t \cdot y} h(y) \ind y, \quad \forall h \in L^2(B), \quad t \in B
\end{equation}
which is dictated by $\langle \mathcal{S}^*  h,g \rangle =  \langle  h, \mathcal{S}^* g \rangle$. Note that the operator $\mathcal{S}^*$ is nothing but the operator $\mathcal{F}^c_d$. Furthermore, we introduce  $\mathcal{T}: L^2(B) \to L^2(B)$ by 
\begin{equation} \label{Section operator T def}
 \mathcal{T}  f   :=\underline{q} f, \quad \forall f \in L^2(B),
\end{equation}
where $\underline{q}$ is the extension of $q$ given by
\begin{equation} \label{Section operator underline q def}
\underline{q}(x) := 
\left\{
\begin{array}{cc}
q(x)  & x\in \Omega    \\
0  &   \mbox{otherwise}
\end{array}
\right..
\end{equation}
It follows directly that  
\begin{equation} \label{Section operator fac theorem eqn}
\mathcal{N} = \mathcal{S}^* \mathcal{T} \mathcal{S}
\end{equation}
where $\mathcal{S}$, $\mathcal{S}^*$, and $\mathcal{T}$ are given by \eref{Section operator S def}, \eref{Section operator S* def}, and \eref{Section operator T def}, respectively. 
It is noted that the middle operator $\mathcal{T}$ is no longer positive definite unless $\Omega=B$, this is in contrast to the first factorization $\mathcal{S}^*_\Omega \mathcal{T}_\Omega \mathcal{S}_\Omega$ where $\mathcal{T}_\Omega$ is positive definite. It is also noted that  $\mathcal{S}^*$ and $\mathcal{S}$  are parameter-independent.
\end{remark}

For later purposes, we introduce the eigensystem $\{ \zeta_n,\mu_n\}_{n=0}^\infty$ of the self-adjoint, positive definite operator $\mathcal{N}$ by
\begin{equation} \label{Section operator Omega eigensystem}
\mathcal{N} \zeta_n = \mu_n \zeta_n, \quad n=0,1,\cdots,
\end{equation}
here $\zeta_n \in L^2(B)$ and $\mu_0 \ge \mu_1 \ge \cdots >0$.

We would also like to stress that PSWFs are an interesting object to analyse the data operator $\mathcal{N}$. Indeed we would like to prove that it sort of compresses the operator $\mathcal{N}$ since
\begin{eqnarray*}
&&\big|  \big< \mathcal{N}  \psi_j(\cdot;c ), \psi_i(\cdot;c )  \big> \big|
= \big| \big< \mathcal{T}  \lambda_j \psi_j(\cdot;c ),  \lambda_i \psi_i(\cdot;c )  \big>\big|  \\
&=&  \big| \lambda_j  \bar \lambda_i \int_{ \Omega} \psi_j(y ; c) \bar \psi_i(y;c )  q(y) \ind y \big|  \leq  \big| \lambda_j  \lambda_i \big|  \|q\|_{L^\infty(\Omega)},
\end{eqnarray*}
where in the last step we applied the Cauchy-Schwartz inequality twice and the fact that $\{\psi_k(\cdot;c )\}_{k \in \mathbbm{N}_d}$ is an orthonormal set.

Therefore as evidenced by the super fast decay of the prolate eigenvalues (see, for instance,  \cite[equation 2.17]{wang10}) where
\begin{equation*} 
|\lambda_j(c)\lambda_i(c)| \sim e^{(j+1/2) \left(\log\frac{ec}{4}-\log(j+1/2) \right)+(i+1/2) \left(\log\frac{ec}{4}-\log(i+1/2) \right)}, \quad i,j \gg 1.
\end{equation*}
we can deduce that the operator $\mathcal{N}$ has a compressed representation in the basis $\left\{\psi_j(\cdot;c ) \right\}_{j\in \mathbb{N}}$. This could allow to speed up the computation by truncated  $\mathcal{N}$ or to help for denoising the data.

%%%%%%%%%%%%%%%%%%%%%%%%%
\section{Linear sampling and factorization methods for shape identification} \label{Section FM and a LSM}
In this section we study the factorization method, generalized linear sampling method and a formulation of the linear sampling method for shape identification. To begin with, let $\phi_z \in L^2(B)$ be given by 
\begin{equation} \label{Section FM and a LSM phi_z def}
\phi_z(t) := \int_{\mathbb{R}^d} e^{i c t \cdot y} E_z(y) \ind y,
\end{equation}

where  
\begin{equation} \label{Section FM and a LSM E_z def}
E_z := \frac{1}{|R(z,\epsilon)|} 1_{R(z,\epsilon)}, \quad 
1_{R(z,\epsilon)}(x):=\left\{
\begin{array}{cc}
1  & x\in R(z,\epsilon)   \\
0  &   \mbox{otherwise}
\end{array}
\right.,
\end{equation}
here $ |R(z,\epsilon)|$ is the length/area of $R(z,\epsilon)$ that satisfies $R_{inf}|\epsilon|^d\le|R(z,\epsilon)|\le R_{sup}|\epsilon|^d$ for some positive constants $R_{inf/sup}$. In practical applications, we usually choose an interval/square region $\{x=(x_1,\cdots,x_d): |x_j-z_j|< \epsilon,\,j=1,\cdots,d\}$ or an interval/disk region $\{x: |x-z|<\epsilon\}$. 

Throughout the paper we fix the parameter $\epsilon$ in the analysis and thereby chose to omit the dependence of $\phi_z$ and $E_z$ on $\epsilon$; we also sample the sampling point $z$   so that $\overline{R(z,\epsilon)}  \subset  B$ which is assumed later on.

The function $\phi_z$ allows us to characterize the support of $\Omega$. More precisely we have the following lemma.
\begin{lemma} \label{Section FM and a LSM lemma phi_z S^*_Omega}
Let $\overline{R(z,\epsilon)} \subset B$. The  following characterizations of the support $\Omega$ hold.
\begin{itemize}
\item If $\overline{R(z,\epsilon)}  \subset \Omega$, then $\phi_z \in Range(\mathcal{S}^*_\Omega)$.
\item If $ R(z,\epsilon)    \not\subset  \overline{\Omega}$, then $\phi_z \not\in Range(\mathcal{S}^*_\Omega)$. 
\end{itemize}

\end{lemma}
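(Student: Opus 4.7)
The plan is to treat the two bullets separately: the first by a direct construction using the ambient function $E_z$ itself, and the second by a contradiction argument based on the analyticity of restricted Fourier integrals (Paley--Wiener) combined with uniqueness of Fourier inversion.

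For the first bullet, assume $\overline{R(z,\epsilon)}\subset\Omega$. Then $E_z$, as defined by \eref{Section FM and a LSM E_z def}, is supported inside $\Omega$, so its restriction $h := E_z|_\Omega$ lies in $L^2(\Omega)$. Plugging this into \eref{Section operator S*_omega def} and comparing with \eref{Section FM and a LSM phi_z def} gives $(\mathcal{S}^*_\Omega h)(t) = \int_\Omega e^{ict\cdot y} E_z(y)\ind y = \int_{\mathbb{R}^d} e^{ict\cdot y} E_z(y)\ind y = \phi_z(t)$ for every $t\in B$, which is exactly $\phi_z\in \mathrm{Range}(\mathcal{S}^*_\Omega)$. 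This step is essentially bookkeeping.

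For the second bullet, suppose for contradiction that $\phi_z=\mathcal{S}^*_\Omega h$ for some $h\in L^2(\Omega)$. Extend $h$ by zero outside $\Omega$ to obtain $\tilde h\in L^2(\mathbb{R}^d)$ with $\mathrm{supp}(\tilde h)\subset\overline{\Omega}$. The identity $\mathcal{S}^*_\Omega h=\phi_z$ on $B$ then reads
\begin{equation*}
\int_{\mathbb{R}^d} e^{ict\cdot y}\tilde h(y)\ind y = \int_{\mathbb{R}^d} e^{ict\cdot y} E_z(y)\ind y, \quad t\in B.
\end{equation*}
Both sides are Fourier integrals of compactly supported $L^2$ functions, hence entire functions of $t\in\mathbb{C}^d$ by the Paley--Wiener theorem. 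Equality on $B$, which has non-empty interior, forces equality on all of $\mathbb{R}^d$ by analytic continuation. Uniqueness of the Fourier transform on $L^2(\mathbb{R}^d)$ then yields $\tilde h = E_z$ almost everywhere.

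The final step is to use the support mismatch to derive the contradiction. Since $R(z,\epsilon)$ is open and $R(z,\epsilon)\not\subset\overline{\Omega}$, the set $R(z,\epsilon)\setminus\overline{\Omega}$ is a non-empty open set and thus has strictly positive Lebesgue measure. On this set $E_z\equiv 1/|R(z,\epsilon)|>0$ while $\tilde h\equiv 0$, contradicting $\tilde h=E_z$ a.e. Hence no such $h$ exists and $\phi_z\notin\mathrm{Range}(\mathcal{S}^*_\Omega)$. The only subtle point in the argument is the appeal to the Paley--Wiener analyticity together with the fact that $B$ has non-empty interior, which is what allows one to promote the identity on $B$ to a global identity in $L^2(\mathbb{R}^d)$; this is the same ingredient already used in the proof of Proposition \ref{Section operator Omega S_Omega compact injective dense range prop}.
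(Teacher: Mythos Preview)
Your proof is correct and follows essentially the same route as the paper's. The only cosmetic difference is in the second bullet: the paper writes $\mathcal{F}^c_d\,\underline{h}=\phi_z=\mathcal{F}^c_d E_z$ and invokes the injectivity of $\mathcal{F}^c_d$ (already established in Proposition~\ref{Section operator Omega S_Omega compact injective dense range prop}) to conclude $\underline{h}=E_z$, whereas you unpack that injectivity by appealing directly to Paley--Wiener analyticity and analytic continuation from $B$; as you note yourself, this is exactly the ingredient behind Proposition~\ref{Section operator Omega S_Omega compact injective dense range prop}.
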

\begin{proof}
We first prove the first part. Let $\overline{R(z,\epsilon)}  \subset  \Omega$, then $E_z$ is supported in $\Omega$ so that according to \eref{Section operator S*_omega def} and \eref{Section FM and a LSM phi_z def}
\begin{equation*}
\Big(\mathcal{S}^*_\Omega E_z\Big) (t) = \int_{\Omega} e^{ict\cdot y } E_z(y) \ind y = \phi_z(t),
\end{equation*}
which shows that $\phi_z \in Range(\mathcal{S}^*_\Omega)$.

For the second part, let $ R(z,\epsilon)    \not\subset  \overline{\Omega}$ and we prove by showing that if $\mathcal{S}^*_\Omega h = \phi_z$ then $h \in L^2(\Omega)$ mush vanish. To show this we first extend $h$ to $\underline{h}$ that 
\begin{equation*}
\underline{h}(x) := 
\left\{
\begin{array}{cc}
h(x)  & x\in \Omega    \\
0  &   \mbox{otherwise}
\end{array}
\right.,
\end{equation*} 
then $\mathcal{F}^c_d \underline{h} = \mathcal{S}^* \underline{h} = \mathcal{S}^*_\Omega h = \phi_z$, i.e., $\underline{h}=(\mathcal{F}^c_d)^{-1} \phi_z$  which yields that  $\underline{h}= E_z$.
Note that the left hand side is supported in $\Omega$ but the right hand side    is not  supported in $\Omega$ (since $ R(z,\epsilon)    \not\subset  \overline{\Omega}$), this is a contradiction which shows that $h \in L^2(\Omega)$ mush vanish and this completes the proof.
\proofend\end{proof}

The linear sampling method (LSM) and factorization method (FM) for shape identification state the following.
\begin{itemize}
\item[(LSM)] The linear sampling method solves the data equation
$$
\mathcal{N} g_z \sim \phi_z
$$
using a regularization scheme to get a regularized solution $g_{z,\alpha}$ and indicates that
$$
\|g_{z,\alpha}\|_{L^2(B)}
$$
is large for $z$ with $R(z,\epsilon)    \not\subset  \overline{\Omega}$ and is bounded for $z$ with $\overline{R(z,\epsilon)}  \subset  \Omega$ (due to Proposition \ref{Section operator Omega range characterization prop} and Lemma \ref{Section FM and a LSM lemma phi_z S^*_Omega}). This  is suggested by a partial theory similar to \cite{ColtonKirsch96}; we omit this partial theory since we will show a formulation of the linear sampling method and the generalized linear sampling method with complete theoretical justification later on.
\item[(FM)] A direct application of Lemma \ref{Section operator Omega range characterization prop}
 and Lemma \ref{Section FM and a LSM lemma phi_z S^*_Omega} yields the factorization method: If $\overline{R(z,\epsilon)}  \subset \Omega$, then $\phi_z \in Range(\mathcal{N}^{1/2})$. If $R(z,\epsilon)    \not\subset  \overline{\Omega}$, then $\phi_z \not\in Range(\mathcal{N}^{1/2})$. Here
\begin{equation}\label{Section FM and a LSM FM main result}
\phi_z \in Range(\mathcal{N}^{1/2}) \Longleftrightarrow \sum_{n=0}^\infty \frac{|\langle \phi_z, \zeta_n \rangle|^2}{\mu_n} < \infty,
\end{equation}
where  $\{ \zeta_n,\mu_n\}_{n=0}^\infty$ is the eigensystem of  $\mathcal{N}$ given by \eref{Section operator Omega eigensystem}.
\end{itemize}

In this paper, we study a formulation of the linear sampling method in the form of 
$$
\langle \mathcal{S}g_{z,\alpha}, 1_{R(z,\epsilon)}  \rangle
$$
and we show later that such a LSM has capability in both shape and parameter identification. We will also show that the factorization method and the generalized linear sampling method also have capability in   both shape and parameter identification.  In this section we first demonstrate its viability in shape identification. The idea is similar to the earlier work \cite{arens04,arenslechleiter09,audiberthaddar15} in inverse scattering to justify/generalize the linear sampling method.

To begin with, we introduce a family of regularization schemes $\{ \mathcal{R}_\alpha\}_{\alpha>0}$ by
\begin{equation} \label{Section FM and a LSM R_alpha def}
\mathcal{R}_\alpha h := \sum_{n=0}^\infty f_\alpha(\mu_n^2) \mu_n \langle h, \zeta_n \rangle \zeta_n,
\end{equation}
where $f_\alpha$ is a regularizing filter that is a bounded, real-valued, and piecewise continuous function $f_\alpha: (0, \infty) \to \mathbb{R}$ such that
\begin{equation} \label{Section FM and a LSM f_alpha prop}
\lim_{\alpha \to 0 } f_\alpha(\mu) = \frac{1}{\mu} \mbox{ for all } \mu>0,  \quad |\mu f_\alpha(\mu)| \le d_0 \mbox{ for all } \alpha \ge 0 \mbox{ and } \mu>0,
\end{equation}
here $d_0>0$ is a constant. 

With this family of  regularization schemes $\{ \mathcal{R}_\alpha\}_{\alpha>0}$, one can introduce a family of regularized solutions by $g_{z,\alpha} = \mathcal{R}_\alpha \phi_z$.
Classical  regularizations include the Tikhonov regularization with 
$$
f_\alpha(\mu) \to \frac{1}{\mu+\alpha} \quad\mbox{ so that }\quad
g_{z,\alpha} \to  \sum_{n=0}^\infty \frac{\mu_n}{\mu_n^2 + \alpha} \langle \phi_z, \zeta_n \rangle \zeta_n,
$$
and the singular value cut off regularization with
$$
f_\alpha(\mu) \to  \left\{
\begin{array}{cc}
1/\mu  & \mu \ge \alpha    \\
0  &   \mbox{otherwise}
\end{array}
\right.
\quad \mbox{ so that }\quad
g_{z,\alpha} \to  \sum_{\mu_n \ge \alpha}^\infty \frac{1}{\mu_n} \langle \phi_z, \zeta_n \rangle \zeta_n.
$$
Our shape identification result is as follows.
\begin{theorem}\label{Section FM and a LSM main of LSM theorem}
Suppose that $\{ \mathcal{R}_\alpha\}_{\alpha>0}$ is a family of regularization schemes given by \eref{Section FM and a LSM R_alpha def}-- \eref{Section FM and a LSM f_alpha prop} and set $g_{z,\alpha}  = \mathcal{R}_\alpha \phi_z$. The   following characterizations of the support $\Omega$ hold.
\begin{itemize}
\item If $\overline{R(z,\epsilon)}  \subset  \Omega$, then $\langle \mathcal{S}g_{z,\alpha}, 1_{R(z,\epsilon)}  \rangle$ remains bounded as $\alpha \to 0$. Moreover 
\begin{equation} \label{Section FM and a LSM main of LSM theorem LSM=FM}
\lim_{\alpha \to 0} \langle \mathcal{S}g_{z,\alpha}, E_z  \rangle = \|g_z^{FM}\|^2
\end{equation}
where $g_z^{FM} \in L^2(B)$ is the unique solution to $\mathcal{N}^{1/2} g_z^{FM} = \phi_z$.
\item If $R(z,\epsilon)    \not\subset  \overline{\Omega}$, then  $\lim_{\alpha \to 0}\langle \mathcal{S}g_{z,\alpha}, 1_{R(z,\epsilon)}  \rangle = \infty$.
\end{itemize}

\end{theorem}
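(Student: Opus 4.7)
The starting observation is that $E_z$ is supported in $R(z,\epsilon)\subset B$, which identifies $\phi_z = \mathcal{S}^* E_z$. Adjointness in $L^2(B)$ then collapses the indicator to a single scalar quantity:
\begin{equation*}
\langle \mathcal{S} g_{z,\alpha}, 1_{R(z,\epsilon)}\rangle \;=\; |R(z,\epsilon)|\,\langle g_{z,\alpha}, \phi_z\rangle \;=\; |R(z,\epsilon)|\,\langle \mathcal{S} g_{z,\alpha}, E_z\rangle.
\end{equation*}
Expanding $g_{z,\alpha}=\mathcal{R}_\alpha\phi_z$ via the eigensystem $\{\zeta_n,\mu_n\}$ of $\mathcal{N}$ using \eref{Section FM and a LSM R_alpha def} rewrites the core quantity as
\begin{equation*}
\langle g_{z,\alpha}, \phi_z\rangle \;=\; \sum_{n=0}^{\infty} \mu_n^2 f_\alpha(\mu_n^2)\,\frac{|\langle \phi_z, \zeta_n\rangle|^2}{\mu_n}.
\end{equation*}
The whole theorem then reduces to the asymptotics of this series as $\alpha\to 0$, controlled by the Picard series $\sum_n |\langle\phi_z,\zeta_n\rangle|^2/\mu_n$ whose convergence is dictated by the geometric alternative: Lemma~\ref{Section FM and a LSM lemma phi_z S^*_Omega} together with the range identity in Lemma~\ref{Section operator Omega range characterization prop} shows that the Picard series is finite if $\overline{R(z,\epsilon)}\subset\Omega$ and infinite if $R(z,\epsilon)\not\subset\overline{\Omega}$.

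\textbf{Interior case $\overline{R(z,\epsilon)}\subset\Omega$.} The two lemmas place $\phi_z\in \mathrm{Range}(\mathcal{N}^{1/2})$, so $g_z^{FM}$ is well defined and the Picard identity gives $\|g_z^{FM}\|^2=\sum_n |\langle\phi_z,\zeta_n\rangle|^2/\mu_n<\infty$. The uniform bound $|\mu_n^2 f_\alpha(\mu_n^2)|\le d_0$ from \eref{Section FM and a LSM f_alpha prop} combined with the pointwise limit $\mu_n^2 f_\alpha(\mu_n^2)\to 1$ is exactly what dominated convergence needs, applied against the summable sequence $|\langle\phi_z,\zeta_n\rangle|^2/\mu_n$; this yields $\lim_{\alpha\to0}\langle g_{z,\alpha},\phi_z\rangle = \|g_z^{FM}\|^2$, which is \eref{Section FM and a LSM main of LSM theorem LSM=FM}. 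Boundedness of $\langle \mathcal{S}g_{z,\alpha},1_{R(z,\epsilon)}\rangle$ is immediate from the proportionality identity above.

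\textbf{Exterior case $R(z,\epsilon)\not\subset\overline{\Omega}$.} Now $\phi_z\notin \mathrm{Range}(\mathcal{N}^{1/2})$, so the Picard series diverges. For each fixed $n$ the multiplier $\mu_n^2 f_\alpha(\mu_n^2)\to 1$, and for the standard filters considered (Tikhonov, singular-value cut-off) one has $\mu_n^2 f_\alpha(\mu_n^2)\in[0,d_0]$. Fatou's lemma applied to the nonnegative terms $\mu_n^2 f_\alpha(\mu_n^2)\cdot |\langle\phi_z,\zeta_n\rangle|^2/\mu_n$ then gives
\begin{equation*}
\liminf_{\alpha\to 0}\,\langle g_{z,\alpha},\phi_z\rangle \;\ge\; \sum_{n=0}^{\infty} \frac{|\langle\phi_z,\zeta_n\rangle|^2}{\mu_n} \;=\; \infty,
\end{equation*}
which after multiplication by $|R(z,\epsilon)|$ is the desired divergence of the indicator.

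\textbf{Main obstacle.} The only delicate point is the exterior case: the abstract hypothesis \eref{Section FM and a LSM f_alpha prop} only bounds $|\mu f_\alpha(\mu)|$ and does not enforce the nonnegativity $\mu_n^2 f_\alpha(\mu_n^2)\ge 0$ needed by Fatou. Two clean fixes are available: either record the mild extra assumption $\mu f_\alpha(\mu)\ge 0$ (satisfied by every filter the paper actually uses), or run a two-step truncation: the pointwise convergence gives $\lim_{\alpha\to 0}\sum_{n\le N}\mu_n^2 f_\alpha(\mu_n^2)\cdot|\langle\phi_z,\zeta_n\rangle|^2/\mu_n = \sum_{n\le N}|\langle\phi_z,\zeta_n\rangle|^2/\mu_n$ for every fixed $N$, and then letting $N\to\infty$ forces the liminf to be infinite, provided the sign oscillation in the tail is controlled by the nonnegativity of $\mu_n^2 f_\alpha(\mu_n^2)$ at small $\alpha$ uniformly. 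The former route is shorter and matches the hypotheses used in the alternative linear sampling literature \cite{arenslechleiter15}, so that is the one I would take.
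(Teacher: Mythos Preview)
Your argument is correct and follows essentially the same route as the paper: the key identity $\langle \mathcal{S}g_{z,\alpha},E_z\rangle=\langle g_{z,\alpha},\phi_z\rangle=\sum_n f_\alpha(\mu_n^2)\mu_n|\langle\phi_z,\zeta_n\rangle|^2$, then dominated convergence in the interior case and divergence of the Picard series in the exterior case. The only cosmetic difference is that for the exterior case the paper runs exactly the two-step truncation you describe (choose $N_M$ so the partial Picard sum exceeds $2M$, then choose $\alpha_M$ so that $f_\alpha(\mu_n^2)>\tfrac{1}{2\mu_n^2}$ for $n\le N_M$), whereas you invoke Fatou; both are equivalent and both tacitly need $f_\alpha\ge 0$ to drop the tail, a point the paper does not flag but you do.
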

\begin{proof}
It is sufficient to prove the theorem for $\langle \mathcal{S}g_{z,\alpha}, E_z  \rangle$ since $E_z$ given by \eref{Section FM and a LSM E_z def} differs from $1_{R(z,\epsilon)}$ by a scaling. To begin with, we first remind the readers that one always has $\overline{R(z,\epsilon)}  \subset  B$.
We first derive an expression of $\langle \mathcal{S}g_{z,\alpha}, E_z \rangle$. From the definition of $g_{z,\alpha}$, one gets  $g_{z,\alpha}  = \mathcal{R}_\alpha \phi_z$ whereby
$g_{z,\alpha} =  \sum_{n=0}^\infty f_\alpha(\mu_n^2) \mu_n \langle \phi_z, \zeta_n \rangle \zeta_n$,
in this way we obtain
\begin{eqnarray} \label{Section FM and a LSM main of LSM theorem proof expression}
&&\langle \mathcal{S}g_{z,\alpha}, E_z  \rangle =\langle g_{z,\alpha}, \mathcal{S}^*E_z  \rangle = \langle g_{z,\alpha},  \phi_z \rangle = \sum_{n=0}^\infty f_\alpha(\mu_n^2) \mu_n |\langle \phi_z, \zeta_n \rangle|^2.
\end{eqnarray}

We now prove the first part. Let $\overline{R(z,\epsilon)}  \subset  \Omega$, then from the factorization method result \eref{Section FM and a LSM FM main result} one can obtain that there  exists the unique solution $g_z^{FM} \in L^2(B)$ to $\mathcal{N}^{1/2} g_z^{FM} = \phi_z$   and
$
 \|g_z^{FM}\|^2 = \sum_{n=0}^\infty \frac{|\langle \phi_z, \zeta_n \rangle|^2}{\mu_n} < \infty$.
Note that $f_\alpha$ satisfies \eref{Section FM and a LSM f_alpha prop}, then we have from \eref{Section FM and a LSM main of LSM theorem proof expression} that
$$
\langle \mathcal{S}g_{z,\alpha}, E_z  \rangle  = \sum_{n=0}^\infty f_\alpha(\mu_n^2) \mu_n |\langle \phi_z, \zeta_n \rangle|^2  \le d_0  \sum_{n=0}^\infty \frac{|\langle \phi_z, \zeta_n \rangle|^2}{\mu_n} < \infty.
$$
i.e., $\langle \mathcal{S}g_{z,\alpha}, 1_{R(z,\epsilon)}  \rangle$ remains bounded as $\alpha \to 0$. Then from the dominated convergence theorem we can take the limit $\alpha \to 0$ so that
\begin{equation*}
\lim_{\alpha \to 0}   \langle \mathcal{S}g_{z,\alpha}, E_z  \rangle =\sum_{n=0}^\infty \big(\lim_{\alpha \to 0}f_\alpha(\mu_n^2) \mu^2_n\big) \frac{1}{\mu_n} |\langle \phi_z, \zeta_n \rangle|^2 = \sum_{n=0}^\infty  \frac{1}{\mu_n} |\langle \phi_z, \zeta_n \rangle|^2  = \|g_z^{FM}\|^2.
\end{equation*}
This proves the first part.

For the second part when $R(z,\epsilon)    \not\subset  \overline{\Omega}$, first note from the factorization method result \eref{Section FM and a LSM FM main result} that $\phi_z \not\in Range(\mathcal{N}^{1/2})$ so that 
$\sum_{n=0}^\infty \frac{|\langle \phi_z, \zeta_n \rangle|^2}{\mu_n} = \infty$,
then for any large $M>0$, there exists $N_M>0$ such that 
$
\sum_{n=0}^{N_M} \frac{|\langle \phi_z, \zeta_n \rangle|^2}{\mu_n} >2M
$.
Now we chose $\alpha_M>0$ (due to the property of $f_\alpha$ in  \eref{Section FM and a LSM f_alpha prop}) such that
$
f_\alpha(\mu_n^2) > \frac{1}{2\mu_n^2}, \,\forall \alpha \in (0,\alpha_M),\, \mbox{for all }  n=0,1,\cdots,N_M$,
this yields that  for any large $M>0$, there exists $\alpha_M>0$ such that
$$
\langle \mathcal{S}g_{z,\alpha}, E_z  \rangle \ge \sum_{n=0}^{N_M} f_\alpha(\mu_n^2) \mu_n |\langle \phi_z, \zeta_n \rangle|^2 > \frac{1}{2}\sum_{n=0}^{N_M} \frac{|\langle \phi_z, \zeta_n \rangle|^2}{\mu_n} >M,\quad \forall \alpha \in (0,\alpha_M).
$$
This proves $\lim_{\alpha \to 0}\langle \mathcal{S}g_{z,\alpha}, E_z  \rangle = \infty$, i.e., $\lim_{\alpha \to 0}\langle \mathcal{S}g_{z,\alpha}, 1_{R(z,\epsilon)}  \rangle = \infty$ which completes the proof.
\proofend\end{proof}

From the above theorem and its proof, one can also prove in the same way the following result, which uses the indicator introduces in the generalized linear sampling method first proposed by \cite{audiberthaddar15}.
\begin{theorem}\label{Section FM and a LSM main of GLSM theorem}
 Suppose that $\{ \mathcal{R}_\alpha\}_{\alpha>0}$ is a family of regularization schemes given by \eref{Section FM and a LSM R_alpha def}-- \eref{Section FM and a LSM f_alpha prop} and set $g_{z,\alpha}  = \mathcal{R}_\alpha \phi_z$. The   following characterizations of the support $\Omega$ hold.
\begin{itemize}
\item If $\overline{R(z,\epsilon)}  \subset  \Omega$, then $\langle  g_{z,\alpha},  \mathcal{N} g_{z,\alpha} \rangle$ remains bounded as $\alpha \to 0$. Moreover 
\begin{equation} \label{Section FM and a LSM main of LSM theorem GLSM=FM}
\lim_{\alpha \to 0} \langle  g_{z,\alpha},  \mathcal{N} g_{z,\alpha}  \rangle = \|g_z^{FM}\|^2
\end{equation}
where $g_z^{FM} \in L^2(B)$ is the unique solution to $\mathcal{N}^{1/2} g_z^{FM} = \phi_z$.
\item If $R(z,\epsilon)    \not\subset  \overline{\Omega}$, then  $\lim_{\alpha \to 0}  \langle  g_{z,\alpha},  \mathcal{N} g_{z,\alpha}  \rangle = \infty$.
\end{itemize}
\end{theorem}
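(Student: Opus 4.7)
The plan is to mirror the spectral-decomposition argument used for Theorem 1, with the observation that $\langle g_{z,\alpha}, \mathcal{N} g_{z,\alpha}\rangle$ admits essentially the same diagonal form as $\langle \mathcal{S} g_{z,\alpha}, E_z\rangle$, up to replacing the factor $f_\alpha(\mu_n^2)\mu_n^2$ by its square. First I would expand $g_{z,\alpha} = \mathcal{R}_\alpha \phi_z$ in the orthonormal basis $\{\zeta_n\}$ of $\mathcal{N}$, which gives $g_{z,\alpha} = \sum_n f_\alpha(\mu_n^2)\mu_n \langle \phi_z,\zeta_n\rangle \zeta_n$ and $\mathcal{N} g_{z,\alpha} = \sum_n f_\alpha(\mu_n^2)\mu_n^2 \langle \phi_z,\zeta_n\rangle \zeta_n$. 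Taking the inner product and using orthonormality yields
\begin{equation*}
\langle g_{z,\alpha},\mathcal{N} g_{z,\alpha}\rangle = \sum_{n=0}^\infty \bigl[f_\alpha(\mu_n^2)\mu_n^2\bigr]^2 \, \frac{|\langle \phi_z,\zeta_n\rangle|^2}{\mu_n}.
\end{equation*}
This is the key identity that reduces the analysis to the quantities already controlled in Theorem 1.

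For the first bullet, I would use that $\overline{R(z,\epsilon)}\subset\Omega$ implies, by Lemma \ref{Section FM and a LSM lemma phi_z S^*_Omega} combined with the range identity of Lemma \ref{Section operator Omega range characterization prop}, that $\phi_z\in\mathrm{Range}(\mathcal{N}^{1/2})$ with unique preimage $g_z^{FM}$ satisfying $\|g_z^{FM}\|^2 = \sum_n |\langle\phi_z,\zeta_n\rangle|^2/\mu_n<\infty$. Since $|f_\alpha(\mu_n^2)\mu_n^2|\le d_0$ by \eref{Section FM and a LSM f_alpha prop}, each term in the series above is dominated by $d_0^2 |\langle\phi_z,\zeta_n\rangle|^2/\mu_n$, yielding boundedness. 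The pointwise limit $[f_\alpha(\mu_n^2)\mu_n^2]^2\to 1$ together with dominated convergence then gives the desired identity \eref{Section FM and a LSM main of LSM theorem GLSM=FM}.

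For the second bullet, when $R(z,\epsilon)\not\subset\overline{\Omega}$, Lemma \ref{Section FM and a LSM lemma phi_z S^*_Omega} and the range identity give $\phi_z\notin\mathrm{Range}(\mathcal{N}^{1/2})$, so by \eref{Section FM and a LSM FM main result} the series $\sum_n |\langle\phi_z,\zeta_n\rangle|^2/\mu_n$ diverges. For any $M>0$, choose $N_M$ so that the partial sum up to $N_M$ exceeds $2M$; then pick $\alpha_M>0$ small enough that $[f_\alpha(\mu_n^2)\mu_n^2]^2 > 1/2$ for all $n\le N_M$ and all $\alpha\in(0,\alpha_M)$ (possible by the pointwise convergence in \eref{Section FM and a LSM f_alpha prop} applied to the finitely many indices). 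Truncating the series then gives $\langle g_{z,\alpha},\mathcal{N} g_{z,\alpha}\rangle > M$, proving divergence.

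I do not expect any real obstacle: the argument is structurally identical to Theorem \ref{Section FM and a LSM main of LSM theorem}, with the single observation that squaring $f_\alpha(\mu_n^2)\mu_n^2$ preserves both the uniform bound (now $d_0^2$) and the pointwise limit (still $1$). The only mild care needed is in bookkeeping the exponent $\mu_n^3$ that appears before refactoring as $[f_\alpha(\mu_n^2)\mu_n^2]^2/\mu_n$, to make the comparison with $\|g_z^{FM}\|^2$ transparent.
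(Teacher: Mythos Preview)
Your proposal is correct and matches the paper's own proof essentially verbatim: the paper derives the identity $\langle g_{z,\alpha},\mathcal{N} g_{z,\alpha}\rangle = \sum_n |f_\alpha(\mu_n^2)|^2 \mu_n^3 |\langle\phi_z,\zeta_n\rangle|^2$ and then says to follow line by line the proof of Theorem~\ref{Section FM and a LSM main of LSM theorem}, which is exactly what you do (your refactoring as $[f_\alpha(\mu_n^2)\mu_n^2]^2/\mu_n$ is the same expression).
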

\begin{proof}
The proof is almost the same as the proof of Theorem \ref{Section FM and a LSM main of LSM theorem}. We omit the details but only highlight the difference. The  difference arises from
\begin{eqnarray*}
&& \langle  g_{z,\alpha},  \mathcal{N} g_{z,\alpha}  \rangle   = \sum_{n=0}^\infty |f_\alpha(\mu_n^2)|^2 \mu_n^3 |\langle \phi_z, \zeta_n \rangle|^2,
\end{eqnarray*}
and one can complete the proof by following line by line the proof of Theorem \ref{Section FM and a LSM main of LSM theorem}.  
\proofend\end{proof}

Theorem \ref{Section FM and a LSM main of LSM theorem} and Theorem \ref{Section FM and a LSM main of GLSM theorem} allow to determine an $\epsilon$ neighborhood of the support  $\Omega$ since it is capable of determining whether $\overline{R(z,\epsilon)}  \subset  \Omega$ or $R(z,\epsilon)    \not\subset  \overline{\Omega}$, as is similar to \cite{GriesmaierSchmiedecke-source,meng22}. 
We show in the next section that such a formulation has the capability in parameter identification in addition to shape identification.

%%%%%%%%%%%%%%%%%%%%%%
\section{Linear sampling and factorization methods   for parameter identification} \label{Section parameter LSM}
In this section we demonstrate that the linear sampling  and factorization methods have capability in parameter identification.  
We first prove the following lemma.
\begin{lemma}\label{Section parameter LSM lemma}
Suppose that $\{ \mathcal{R}_\alpha\}_{\alpha>0}$ is a family of regularization schemes given by \eref{Section FM and a LSM R_alpha def}-- \eref{Section FM and a LSM f_alpha prop} and set $g_{z,\alpha}  = \mathcal{R}_\alpha \phi_z$. Then it holds for any   $\overline{R(z,\epsilon)}  \subset  \Omega$ that 
\begin{equation*}%\label{Section parameter LSM lemma identity 1}
\|\mathcal{S}_\Omega g_{z,\alpha}\|_{L^2(\Omega)} \le \frac{d_0}{q_{inf}} \|E_z\|_{L^2(\Omega)}=\frac{d_0}{q_{inf}{ \sqrt{|R(z,\epsilon)|}}}, \quad \forall \alpha>0,
\end{equation*}
where $d_0$ given by \eref{Section FM and a LSM f_alpha prop} is a constant independent of $\alpha$, and $q_{inf}>0$ given by \eref{Section parameter q assumption} is the lower bound of $q$.
\end{lemma}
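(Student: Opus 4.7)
The plan is to leverage the factorization of Theorem \ref{Section operator Omega fac theorem} together with the uniform positivity $q\ge q_{inf}$ from \eref{Section parameter q assumption} in order to convert the filter bound $|\mu f_\alpha(\mu)|\le d_0$ into the desired estimate. The first observation is that because $\overline{R(z,\epsilon)}\subset \Omega$, the proof of the first part of Lemma \ref{Section FM and a LSM lemma phi_z S^*_Omega} gives the identity $\phi_z=\mathcal{S}^*_\Omega E_z$, so the source that drives $g_{z,\alpha}$ lies in the range of $\mathcal{S}^*_\Omega$ with the explicit preimage $E_z\in L^2(\Omega)$. The second observation is that under \eref{Section parameter q assumption} one can take a ``half factorization'': set $B:=\mathcal{T}_\Omega^{1/2}\mathcal{S}_\Omega$, where $\mathcal{T}_\Omega^{1/2}$ denotes multiplication by $\sqrt{q}$. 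Then $B^*=\mathcal{S}^*_\Omega\mathcal{T}_\Omega^{1/2}$ and $B^*B=\mathcal{N}$, i.e. $B$ is the ``square root'' of $\mathcal{N}$ which carries all the spectral information.

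Treating $B$ via its SVD is the core of the proof. Since $\mathcal{N}\zeta_n=\mu_n\zeta_n$, the $\zeta_n$ are right singular vectors of $B$ with singular values $\sqrt{\mu_n}$, and the normalized left singular vectors $\eta_n:=B\zeta_n/\sqrt{\mu_n}$ form an orthonormal system in $L^2(\Omega)$. They actually form a complete orthonormal basis because $\mathcal{S}_\Omega$ has dense range in $L^2(\Omega)$ (Proposition \ref{Section operator Omega S_Omega compact injective dense range prop}) and $\mathcal{T}_\Omega^{1/2}$ is bounded and invertible (as $q_{inf}\le q\le q_{sup}$), so $B$ has dense range. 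Using $\phi_z=\mathcal{S}^*_\Omega E_z=B^*(E_z/\sqrt{q})$ and $B^*\eta_n=\sqrt{\mu_n}\zeta_n$, one gets the key identity
\begin{equation*}
\langle \phi_z,\zeta_n\rangle =\sqrt{\mu_n}\,\bigl\langle E_z/\sqrt{q},\,\eta_n\bigr\rangle_{L^2(\Omega)}.
\end{equation*}
Substituting into $g_{z,\alpha}=\sum_n f_\alpha(\mu_n^2)\mu_n\langle\phi_z,\zeta_n\rangle\zeta_n$ and applying $B$ collapses the series to
\begin{equation*}
B g_{z,\alpha}=\sum_n \mu_n^{2}f_\alpha(\mu_n^2)\,\bigl\langle E_z/\sqrt{q},\eta_n\bigr\rangle_{L^2(\Omega)}\,\eta_n.
\end{equation*}
Since $|\mu_n^2 f_\alpha(\mu_n^2)|\le d_0$ (this is exactly the filter bound \eref{Section FM and a LSM f_alpha prop} applied at $\mu=\mu_n^2$), Parseval in the $\{\eta_n\}$ basis yields $\|Bg_{z,\alpha}\|_{L^2(\Omega)}^2\le d_0^2\,\|E_z/\sqrt{q}\|_{L^2(\Omega)}^2\le (d_0^2/q_{inf})\,\|E_z\|_{L^2(\Omega)}^2$. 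Finally, the pointwise inequality $q\ge q_{inf}$ gives $\|B g\|_{L^2(\Omega)}^2=\int_\Omega q|\mathcal{S}_\Omega g|^2\ge q_{inf}\|\mathcal{S}_\Omega g\|_{L^2(\Omega)}^2$, so taking square roots we obtain $\|\mathcal{S}_\Omega g_{z,\alpha}\|_{L^2(\Omega)}\le (d_0/q_{inf})\|E_z\|_{L^2(\Omega)}$. The closed-form constant follows from $\|E_z\|_{L^2(\Omega)}^2=\int_{R(z,\epsilon)}|R(z,\epsilon)|^{-2}=|R(z,\epsilon)|^{-1}$.

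The only real subtlety is the telescoping of powers of $\mu_n$: the $\sqrt{\mu_n}$ hidden in $\langle\phi_z,\zeta_n\rangle$, the $\mu_n$ built into the regularization \eref{Section FM and a LSM R_alpha def}, and the $\sqrt{\mu_n}$ coming from $B\zeta_n$ must combine to produce exactly $\mu_n^2 f_\alpha(\mu_n^2)$, which is the unique power that can be uniformly controlled by $d_0$ via \eref{Section FM and a LSM f_alpha prop}. This is why the estimate lives naturally on the $B$ side rather than on $g_{z,\alpha}$ itself, and why one needs to pay one power of $q_{inf}^{-1}$ at the end to descend from $B g_{z,\alpha}$ to $\mathcal{S}_\Omega g_{z,\alpha}$, producing the $q_{inf}^{-2}$ dependence inside the square root, i.e., the $q_{inf}^{-1}$ in the stated bound.
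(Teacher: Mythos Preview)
Your proof is correct but takes a genuinely different route from the paper's. The paper does not introduce the half-factorization $B=\mathcal{T}_\Omega^{1/2}\mathcal{S}_\Omega$ or its singular system; instead it argues variationally: it bounds $q_{inf}\|\mathcal{S}_\Omega g_{z,\alpha}\|^2_{L^2(\Omega)}\le\langle\mathcal{N}g_{z,\alpha},g_{z,\alpha}\rangle$, expands this quadratic form in the eigenbasis $\{\zeta_n\}$ as $\sum_n [f_\alpha(\mu_n^2)]^2\mu_n^3|\langle\phi_z,\zeta_n\rangle|^2$, uses the filter bound once to reduce it to $d_0\langle\phi_z,g_{z,\alpha}\rangle=d_0\langle E_z,\mathcal{S}_\Omega g_{z,\alpha}\rangle_{L^2(\Omega)}$, and then closes with Cauchy--Schwarz and a division by $\|\mathcal{S}_\Omega g_{z,\alpha}\|_{L^2(\Omega)}$. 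That argument is shorter and avoids constructing the auxiliary orthonormal system $\{\eta_n\}$. Your SVD approach, on the other hand, is more explicit about the spectral mechanism and makes transparent why exactly the combination $\mu_n^2 f_\alpha(\mu_n^2)$ is the one that must be uniformly controlled; it also has the minor advantage of using only the two-sided bound $|\mu f_\alpha(\mu)|\le d_0$, whereas the paper's chain $[f_\alpha(\mu_n^2)]^2\mu_n^3\le d_0 f_\alpha(\mu_n^2)\mu_n$ tacitly relies on $f_\alpha\ge 0$. As a small aside, completeness of $\{\eta_n\}$ is not strictly needed at your Parseval step: Bessel's inequality $\sum_n|\langle E_z/\sqrt{q},\eta_n\rangle|_{L^2(\Omega)}^2\le\|E_z/\sqrt{q}\|_{L^2(\Omega)}^2$ already suffices for the upper bound you want.
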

\begin{proof}
Note that  $\mathcal{T}_\Omega$ is self-adjoint and bounded below by $q_{\inf} I$ (here $I$ is the identity operator), then it follows that  
\begin{eqnarray}\label{Section parameter LSM lemma proof eqn1}
&& q_{\inf} \|\mathcal{S}_\Omega  g_{z,\alpha} \|^2_{L^2(\Omega)} \le  \langle \mathcal{T}_\Omega \mathcal{S}_\Omega  g_{z,\alpha},   \mathcal{S}_\Omega  g_{z,\alpha}\rangle_{L^2(\Omega)}= \langle \mathcal{N}  g_{z,\alpha},   g_{z,\alpha} \rangle.\end{eqnarray}
Note that $g_{z,\alpha}$ is given by  $g_{z,\alpha}  = \mathcal{R}_\alpha \phi_z$ whereby
\begin{equation*}  
g_{z,\alpha} =  \sum_{n=0}^\infty f_\alpha(\mu_n^2) \mu_n \langle \phi_z, \zeta_n \rangle \zeta_n, \qquad 
\mathcal{N}g_{z,\alpha} =  \sum_{n=0}^\infty f_\alpha(\mu_n^2) \mu^2_n \langle \phi_z, \zeta_n \rangle \zeta_n,
\end{equation*}
which yields (where one notes that $f_\alpha$ and $\mu_n$ are real-valued)
\begin{equation*}  
\hspace{-5em}\langle \mathcal{N}  g_{z,\alpha},   g_{z,\alpha} \rangle =  \sum_{n=0}^\infty [f_\alpha(\mu_n^2)]^2 \mu^3_n |\langle \phi_z, \zeta_n \rangle|^2 
%\overset{\eref{Section FM and a LSM f_alpha prop}}{\le}  
\le
d_0\sum_{n=0}^\infty f_\alpha(\mu_n^2) \mu_n |\langle \phi_z, \zeta_n \rangle|^2 = d_0 \langle \phi_z,   g_{z,\alpha} \rangle,
\end{equation*}
this together with $\phi_z = \mathcal{S}^* E_z$ yields that
\begin{equation}  \label{Section parameter LSM lemma proof eqn2}
\langle \mathcal{N}  g_{z,\alpha},   g_{z,\alpha} \rangle \le d_0 \langle \phi_z,   g_{z,\alpha} \rangle = d_0 \langle \mathcal{S}^* E_z,   g_{z,\alpha} \rangle =d_0 \langle E_z,   \mathcal{S} g_{z,\alpha} \rangle 
%\overset{Theorem ~\ref{Section FM and a LSM main of LSM theorem}}{<} 
<\infty
\end{equation}
where the last step is due to that $E_z$ is supported in $\overline{R(z,\epsilon)}  \subset  \Omega$.

Now combining \eref{Section parameter LSM lemma proof eqn1}--\eref{Section parameter LSM lemma proof eqn2} we have that $ \|\mathcal{S}_\Omega  g_{z,\alpha} \|^2_{L^2(\Omega)} <\infty$ and
\begin{eqnarray*}
&&q_{\inf} \|\mathcal{S}_\Omega g_{z,\alpha}\|^2_{L^2(\Omega)} \le \langle \mathcal{N}  g_{z,\alpha},   g_{z,\alpha} \rangle \le  d_0 |\langle E_z,   \mathcal{S} g_{z,\alpha}  \rangle| = d_0 |\langle E_z,   \mathcal{S}_\Omega g_{z,\alpha} \rangle_{L^2(\Omega)}| \\
&\le& d_0 \|E_z\|_{L^2(\Omega)}    \|\mathcal{S}_\Omega g_{z,\alpha}\|_{L^2(\Omega)},
\end{eqnarray*}
this proves the lemma by noting that $\|E_z\|_{L^2(\Omega)} = \|E_z\|_{L^2(B)}={  \frac{1}{ \sqrt{|R(z,\epsilon)|}}}$ due to the definition of $E_z$ in \eref{Section FM and a LSM E_z def}.  
\proofend\end{proof}

Now we are ready to prove the parameter identification theorem. For convenience we let $q^+$ be the pseudo inverse of $q$ given by
\begin{equation} \label{Section parameter LSM q+ def}
q^+:=\left\{
\begin{array}{cc}
1/q  & \Omega    \\
0  &   \mbox{otherwise}
\end{array}
\right..
\end{equation}
The following (disk) PSWFs expansion of $E_z$ and $\phi_z$ will be often used.
Let $\overline{R(z,\epsilon)}  \subset  B$ and let the (disk) PSWFs expansion of $E_z$ be
\begin{equation} \label{Section FM and a LSM E_z PSWF expansion}
E_z = \sum_{j \in \mathbbm{N}_d} \langle E_z, \psi_j(\cdot;c) \rangle  \psi_j(\cdot;c) \mbox{ in } L^2(B),
\end{equation}
then it follows directly from \eref{Section FM and a LSM phi_z def} that $\phi_z = \mathcal{F}^c_d E_z$ and \eref{Section PSWF multi-d psi_n def} so that
\begin{equation} \label{Section FM and a LSM phi_z PSWF expansion}
\phi_z =\sum_{j \in \mathbbm{N}_d}  \langle \phi_z, \psi_j(\cdot;c) \rangle  \psi_j(\cdot;c)= \sum_{j \in \mathbbm{N}_d} \lambda_j(c)\langle E_z, \psi_j(\cdot;c) \rangle  \psi_j(\cdot;c) \mbox{ in } L^2(B).
\end{equation}

\begin{theorem}\label{Section parameter LSM theorem}
Suppose that $\{ \mathcal{R}_\alpha\}_{\alpha>0}$ is a family of regularization schemes given by \eref{Section FM and a LSM R_alpha def}-- \eref{Section FM and a LSM f_alpha prop} and set $g_{z,\alpha}  = \mathcal{R}_\alpha \phi_z$. Then it holds for any  $\overline{R(z,\epsilon)}  \subset  \Omega$ that
\begin{equation}\label{Section parameter LSM theorem identity 1}
\lim_{\alpha \to 0}\langle \mathcal{S}g_{z,\alpha}, 1_{R(z,\epsilon)}  \rangle = \langle 1/q, E_z\rangle_{L^2(R(z,\epsilon))}.
\end{equation}
\end{theorem}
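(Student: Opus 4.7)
The plan is to establish that $\mathcal{S}_\Omega g_{z,\alpha}$ converges weakly in $L^2(\Omega)$ to $E_z/q$; then testing this weak limit against $1_{R(z,\epsilon)}$, which belongs to $L^2(\Omega)$ because $\overline{R(z,\epsilon)}\subset\Omega$, immediately yields the claimed identity. The three ingredients that drive this are: (i) strong convergence $\mathcal{N} g_{z,\alpha}\to\phi_z$ in $L^2(B)$, (ii) the factorization $\mathcal{N}=\mathcal{S}^*_\Omega \mathcal{T}_\Omega\mathcal{S}_\Omega$ from Theorem \ref{Section operator Omega fac theorem}, and (iii) the uniform bound on $\mathcal{S}_\Omega g_{z,\alpha}$ furnished by Lemma \ref{Section parameter LSM lemma}.

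First I would prove the strong convergence $\mathcal{N} g_{z,\alpha}\to\phi_z$ in $L^2(B)$. Because $\mathcal{N}$ is self-adjoint positive definite and injective (the factorization is a product of injective maps), $\{\zeta_n\}_{n\ge 0}$ is an orthonormal basis, so
\begin{equation*}
\|\mathcal{N} g_{z,\alpha}-\phi_z\|^2 \;=\; \sum_{n=0}^\infty \bigl[f_\alpha(\mu_n^2)\mu_n^2-1\bigr]^2 |\langle \phi_z,\zeta_n\rangle|^2 .
\end{equation*}
The integrand is pointwise bounded by $(d_0+1)^2|\langle \phi_z,\zeta_n\rangle|^2$, a summable majorant, so \eref{Section FM and a LSM f_alpha prop} and dominated convergence (for series) give the claim. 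Using the factorization, this reads $\mathcal{S}^*_\Omega(q\,\mathcal{S}_\Omega g_{z,\alpha})\to \mathcal{S}^*_\Omega E_z$ strongly in $L^2(B)$, since $\phi_z=\mathcal{S}^*_\Omega E_z$ when $\overline{R(z,\epsilon)}\subset\Omega$ (see Lemma \ref{Section FM and a LSM lemma phi_z S^*_Omega}).

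Next I would extract the weak limit. Set $u_\alpha := q\,\mathcal{S}_\Omega g_{z,\alpha}\in L^2(\Omega)$. By Lemma \ref{Section parameter LSM lemma} and $q\in L^\infty$, the family $\{u_\alpha\}$ is uniformly bounded in $L^2(\Omega)$, hence any subsequence admits a further subsequence $u_{\alpha_k}\rightharpoonup u^\star$ weakly in $L^2(\Omega)$. Since $\mathcal{S}^*_\Omega$ is compact (Proposition \ref{Section operator Omega S_Omega compact injective dense range prop}), weak convergence is lifted to strong convergence $\mathcal{S}^*_\Omega u_{\alpha_k}\to \mathcal{S}^*_\Omega u^\star$ in $L^2(B)$. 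Comparing with the strong limit $\mathcal{S}^*_\Omega u_\alpha\to \mathcal{S}^*_\Omega E_z$ and using injectivity of $\mathcal{S}^*_\Omega$ (again Proposition \ref{Section operator Omega S_Omega compact injective dense range prop}) yields $u^\star=E_z$. The uniqueness of the limit across subsequences promotes the weak convergence to the full family: $q\,\mathcal{S}_\Omega g_{z,\alpha}\rightharpoonup E_z$ in $L^2(\Omega)$. Dividing by $q\in L^\infty(\Omega)$, which is real and bounded below by $q_{\inf}>0$, preserves weak convergence, so $\mathcal{S}_\Omega g_{z,\alpha}\rightharpoonup E_z/q$ in $L^2(\Omega)$.

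Finally, since $1_{R(z,\epsilon)}\in L^2(\Omega)$ and $\mathcal{S} g_{z,\alpha}$ agrees with $\mathcal{S}_\Omega g_{z,\alpha}$ on $\Omega\supset R(z,\epsilon)$,
\begin{equation*}
\lim_{\alpha\to 0}\langle \mathcal{S} g_{z,\alpha},1_{R(z,\epsilon)}\rangle \;=\; \langle E_z/q,\,1_{R(z,\epsilon)}\rangle_{L^2(\Omega)} \;=\; \int_{R(z,\epsilon)}\frac{E_z(y)}{q(y)}\,\mathrm{d}y \;=\; \langle 1/q, E_z\rangle_{L^2(R(z,\epsilon))}.
\end{equation*}
The main obstacle is the identification of the weak limit in the second paragraph: raw strong convergence of $\mathcal{S}^*_\Omega u_\alpha$ does not, by itself, pin down $u_\alpha$ because $\mathcal{S}^*_\Omega$ is compact and hence not boundedly invertible. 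The argument succeeds only because Lemma \ref{Section parameter LSM lemma} supplies the a priori bound needed for weak precompactness, and Proposition \ref{Section operator Omega S_Omega compact injective dense range prop} supplies both compactness (to convert weak to strong) and injectivity (to uniquely identify the limit). An alternative PSWF route — expanding $E_z$ in $\{\psi_j\}$ via \eref{Section FM and a LSM E_z PSWF expansion} and exploiting that $\mathcal{S}^*\psi_j=\lambda_j\psi_j$ — gives the same conclusion but obscures which hypotheses on $q$ are actually used.
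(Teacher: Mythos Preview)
Your proof is correct and takes a genuinely different, more abstract route than the paper's. The paper expands $\underline{q\,\mathcal{S}_\Omega g_{z,\alpha}}$ in the PSWF basis $\{\psi_j(\cdot;c)\}$ (or, in its alternate proof, in the singular system of $\mathcal{S}_\Omega$), shows term-by-term that $\langle q\,\mathcal{S}_\Omega g_{z,\alpha},\psi_j\rangle\to\langle E_z,\psi_j\rangle$ using $\mathcal{S}\psi_j=\overline{\lambda_j}\psi_j$ together with $\mathcal{N} g_{z,\alpha}\to\phi_z$, and then invokes Lemma \ref{Section parameter LSM lemma} to justify dominated convergence in the resulting series. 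You replace this explicit basis computation with a basis-free weak-compactness argument: the same lemma supplies precompactness of $\{q\,\mathcal{S}_\Omega g_{z,\alpha}\}$, compactness of $\mathcal{S}^*_\Omega$ upgrades weak convergence to strong, and injectivity of $\mathcal{S}^*_\Omega$ pins down the limit as $E_z$. Your approach is cleaner and makes transparent that the result rests only on the factorization, the coercivity of $\mathcal{T}_\Omega$, and the compactness/injectivity of $\mathcal{S}^*_\Omega$---nothing specific to PSWFs is needed. The paper's approach, by contrast, is deliberately tied to the PSWF machinery that underpins the numerical implementation in Section~\ref{Section reduced LSM}, so it fits the narrative of the paper better. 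The two arguments are equivalent in spirit---the paper's term-wise limits \emph{are} weak convergence tested against an orthonormal basis---but yours isolates the functional-analytic core and would transfer unchanged to other factorizations of the form $\mathcal{S}^*_\Omega\mathcal{T}_\Omega\mathcal{S}_\Omega$ where no convenient eigenbasis of $\mathcal{S}$ is available.
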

\begin{proof}
Throughout the proof, we let $\underline{f} \in L^2(B)$ denote the extension of a generic function $f\in L^2(\Omega)$ by setting $f=0$ in $B\backslash \overline{\Omega}$.

(a). We first give representations for $\langle \mathcal{S}g_{z,\alpha}, 1_{R(z,\epsilon)}  \rangle$ and $ \langle 1/q, E_z\rangle_{L^2(R(z,\epsilon))}$, respectively. For the regularized solution $g_{z,\alpha}  = \mathcal{R}_\alpha \phi_z$, we get the (disk) PSWFs expansion of $\underline{q \mathcal{S}_\Omega g_{z,\alpha}}$ by
$
 \underline{q \mathcal{S}_\Omega g_{z,\alpha}} =    \sum_{j \in \mathbbm{N}_d} \langle \underline{q \mathcal{S}_\Omega g_{z,\alpha}},  \psi_j(\cdot;c)\rangle   \psi_j(\cdot;c)$,
note that  $q$ is supported in $\Omega$, then we further get (where we recall $q^+$ is given by \eref{Section parameter LSM q+ def})
\begin{equation*}
 \mathcal{S}_\Omega g_{z,\alpha}  = \sum_{j \in \mathbbm{N}_d} \langle  \underline{q \mathcal{S}_\Omega g_{z,\alpha}},  \psi_j(\cdot;c)\rangle q^{+}  \psi_j(\cdot;c) \quad \mbox{in} \quad L^2(\Omega),
\end{equation*}
so that (by noting that $\overline{R(z,\epsilon)}  \subset  \Omega$, $q^+$ and $\psi_j(\cdot;c)$ are real-valued)
\begin{eqnarray}\label{Section parameter LSM theorem proof eqn1}
 \langle \mathcal{S}  g_{z,\alpha}, 1_{R(z,\epsilon)}\rangle &=& \langle \mathcal{S}_\Omega g_{z,\alpha}, 1_{R(z,\epsilon)}\rangle  = \sum_{j \in \mathbbm{N}_d} \langle \underline{q \mathcal{S}_\Omega g_{z,\alpha}},  \psi_j(\cdot;c)\rangle \langle q^{+}   \psi_j(\cdot;c), 1_{R(z,\epsilon)} \rangle \nonumber \\
&=& \sum_{j \in \mathbbm{N}_d}   \langle 1_{R(z,\epsilon)}q^{+} ,  \psi_j(\cdot;c)  \rangle \langle  \underline{q \mathcal{S}_\Omega g_{z,\alpha}},  \psi_j(\cdot;c)\rangle.
\end{eqnarray}

On the other hand,
\begin{equation}\label{Section parameter LSM theorem proof eqn2}
 \langle 1/q, E_z\rangle_{L^2(R(z,\epsilon))} =  \langle 1_{R(z,\epsilon)} q^+, E_z\rangle  =   \sum_{j \in \mathbbm{N}_d} \langle 1_{R(z,\epsilon)} q^+, \psi_j(\cdot;c)\rangle  \langle  E_z, \psi_j(\cdot;c)\rangle,
\end{equation}
where in the last step we used the (disk) PSWFs expansion of $1_{R(z,\epsilon)} q^+$ and \eref{Section FM and a LSM E_z PSWF expansion} to evaluate their inner product, and the fact that $E_z$ and $\psi_j(\cdot;c)$ are real-valued. 

(b). Note that $1_{R(z,\epsilon)} q^+ \in L^2(B)$  and that the $ L^2(B)$-norm of $ \underline{q\mathcal{S}_\Omega g_{z,\alpha}}$ is bounded uniformly with respect to $\alpha$ (due to Lemma \ref{Section parameter LSM lemma}), then the infinite series in \eref{Section parameter LSM theorem proof eqn1} is uniformly convergent. By the dominated convergence theorem, one then gets
\begin{eqnarray}  \label{Section parameter LSM theorem proof eqn3}
\lim_{\alpha \to 0} \langle \mathcal{S}  g_{z,\alpha}, 1_{R(z,\epsilon)}\rangle    &=& \sum_{j \in \mathbbm{N}_d}   \langle 1_{R(z,\epsilon)}q^{+} ,  \psi_j(\cdot;c)  \rangle \lim_{\alpha \to 0}  \langle  \underline{q \mathcal{S}_\Omega g_{z,\alpha}},  \psi_j(\cdot;c)\rangle.
\end{eqnarray}
By noting that 
$$
\mathcal{S} \psi_j(\cdot;c)= \overline{\lambda_j(c)}  \psi_j(\cdot;c), \quad  \lim_{\alpha \to 0}\mathcal{N}  g_{z,\alpha}= \lim_{\alpha \to 0} \sum_{n=0}^\infty f_\alpha(\mu_n^2) \mu^2_n \langle \phi_z, \zeta_n \rangle \zeta_n =\phi_z,
$$
one has
 \begin{eqnarray*} 
 \hspace{-5em}
 \lim_{\alpha \to 0}  \langle  \underline{q \mathcal{S}_\Omega g_{z,\alpha}},  \psi_j(\cdot;c)\rangle &=&  \lim_{\alpha \to 0}  \langle  \mathcal{S}^*\underline{q \mathcal{S}_\Omega g_{z,\alpha}},  \overline{\lambda_j(c)^{-1}}  \psi_j(\cdot;c)\rangle = \lim_{\alpha \to 0}  \langle  \mathcal{N} g_{z,\alpha},  \overline{\lambda_j(c)^{-1}}  \psi_j(\cdot;c)\rangle \\
 &=&  \langle  \phi_z,  \overline{\lambda_j(c)^{-1}}  \psi_j(\cdot;c)\rangle =  \langle \mathcal{S}^* E_z,  \overline{\lambda_j(c)^{-1}}  \psi_j(\cdot;c)\rangle = \langle  E_z,    \psi_j(\cdot;c)\rangle.
 \end{eqnarray*}
This equation together with \eref{Section parameter LSM theorem proof eqn3} yield that
\begin{eqnarray*}  %\label{Section parameter LSM theorem proof eqn3}
\lim_{\alpha \to 0} \langle \mathcal{S}  g_{z,\alpha}, 1_{R(z,\epsilon)}\rangle    &=& \sum_{j \in \mathbbm{N}_d}   \langle 1_{R(z,\epsilon)}q^{+} ,  \psi_j(\cdot;c)  \rangle  \langle  E_z,    \psi_j(\cdot;c)\rangle 
=
%\overset{\eref{Section parameter LSM theorem proof eqn2}}{=}  
\langle 1/q, E_z\rangle_{L^2(R(z,\epsilon))} . 
\end{eqnarray*}
This completes the proof.
\proofend\end{proof}

\begin{remark}
It is possible to prove the result of Theorem \ref{Section parameter LSM theorem} using the singular system of $\mathcal{S}_\Omega$. We include such a proof  since this idea is expected to be generalized to other inverse scattering problems.

\begin{proof}[Alternate Proof of Theorem \ref{Section parameter LSM theorem}]
The proof is very similar to the previous one except that we expand the quantity of interest with respect to an orthonormal basis $\{\beta_n\}_{n \in \mathbbm{N}_d} \in L^2(\Omega)$ given by $\mathcal{S}_\Omega\mathcal{S}^*_\Omega \beta_n = \gamma_n \beta_n$ and denote by $\alpha_n = \mathcal{S}^*_\Omega \beta_n$.
%where we have $\mathcal{S}_\Omega(\alpha_n)=\gamma_n\beta_n$ a singular system of the operator $\mathcal{S}_\Omega$. 
First we have the following expression
\begin{equation*}
 \mathcal{S}_\Omega g_{z,\alpha}  = \sum_{j \in \mathbbm{N}_d} \langle  {q \mathcal{S}_\Omega g_{z,\alpha}},  \beta_n\rangle_{L^2(\Omega)} q^{-1} \beta_n \quad \mbox{in} \quad L^2(\Omega),
\end{equation*}
then we have the intermediate result :
 \begin{eqnarray*} 
 \hspace{-5em}
 \lim_{\alpha \to 0}  \langle  {q \mathcal{S}_\Omega g_{z,\alpha}},  \beta_n\rangle _{L^2(\Omega)}&=&  \lim_{\alpha \to 0}  \langle  \mathcal{S}^*\underline{q \mathcal{S}_\Omega g_{z,\alpha}},  {\gamma_n^{-1}}  \alpha_n\rangle_{L^2(B)} = \lim_{\alpha \to 0}  \langle  \mathcal{N} g_{z,\alpha}, {\gamma_n^{-1}}  \alpha_n\rangle_{L^2(B)} \\
 &=&  \langle  \phi_z,   {\gamma_n^{-1}}  \alpha_n\rangle_{L^2(B)} =  \langle \mathcal{S}^* E_z,  {\gamma_n^{-1}}  \alpha_n\rangle_{L^2(B)}= \langle  E_z,    \beta_n\rangle_{L^2(\Omega)}.
 \end{eqnarray*}
 Finally using 
 \begin{equation*}
  \hspace{-5em}\langle q^{-1}, E_z\rangle_{L^2(R(z,\epsilon))} =  \langle 1_{R(z,\epsilon)} q^{-1}, E_z\rangle_{L^2(\Omega)}  =   \sum_{j \in \mathbbm{N}_d} \langle 1_{R(z,\epsilon)} q^{-1}, \beta_n\rangle_{L^2(\Omega)}  \langle  E_z, \beta_n\rangle_{L^2(\Omega)}.
\end{equation*}
 and combining the previous expression we obtain the previous results.
\proofend
\end{proof}

\end{remark}

 Note the connections between the linear sampling method and the factorization method, we can immediately obtain the following.
 \begin{theorem} 
 \label{Section parameter FM theorem}
 Suppose that $\{ \mathcal{R}_\alpha\}_{\alpha>0}$ is a family of regularization schemes given by \eref{Section FM and a LSM R_alpha def}-- \eref{Section FM and a LSM f_alpha prop} and set $g_{z,\alpha}  = \mathcal{R}_\alpha \phi_z$. 
For any  $\overline{R(z,\epsilon)}  \subset  \Omega$, let $g_z^{FM} \in L^2(B)$ be the unique solution to $\mathcal{N}^{1/2} g_z^{FM} = \phi_z$. Then it holds that
\begin{equation*}%\label{Section parameter FM theorem identity 1}
|R(z,\epsilon)| \lim_{\alpha \to 0} \langle  g_{z,\alpha},  \mathcal{N} g_{z,\alpha}  \rangle  = |R(z,\epsilon)|\big\|g_z^{FM}\big\|^2 =  \langle 1/q, E_z\rangle_{L^2(R(z,\epsilon))},
\end{equation*}
where $|R(z,\epsilon)|$ denotes the length/area of $R(z,\epsilon)$.
 \end{theorem}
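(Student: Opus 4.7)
The plan is to derive Theorem \ref{Section parameter FM theorem} essentially for free by chaining together the three preceding results: Theorem \ref{Section FM and a LSM main of LSM theorem} (LSM indicator converges to $\|g_z^{FM}\|^2$), Theorem \ref{Section FM and a LSM main of GLSM theorem} (GLSM indicator $\langle g_{z,\alpha}, \mathcal{N} g_{z,\alpha}\rangle$ converges to the same $\|g_z^{FM}\|^2$), and Theorem \ref{Section parameter LSM theorem} (LSM indicator converges to the weighted average $\langle 1/q, E_z\rangle_{L^2(R(z,\epsilon))}$). No new estimate is needed; the key observation is that the constant $|R(z,\epsilon)|$ is exactly the scaling factor between $E_z$ and $1_{R(z,\epsilon)}$.

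First, I would dispatch the first equality directly from Theorem \ref{Section FM and a LSM main of GLSM theorem}. Since $\overline{R(z,\epsilon)} \subset \Omega$, that theorem yields $\lim_{\alpha \to 0} \langle g_{z,\alpha}, \mathcal{N} g_{z,\alpha}\rangle = \|g_z^{FM}\|^2$, and multiplying through by $|R(z,\epsilon)|$ gives the first identity. This step is essentially a restatement.

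Next, I would prove the second equality $|R(z,\epsilon)| \|g_z^{FM}\|^2 = \langle 1/q, E_z\rangle_{L^2(R(z,\epsilon))}$ by observing that $1_{R(z,\epsilon)} = |R(z,\epsilon)| E_z$ by the definition \eref{Section FM and a LSM E_z def}. Hence by bilinearity of the inner product,
\[
\langle \mathcal{S} g_{z,\alpha}, 1_{R(z,\epsilon)}\rangle = |R(z,\epsilon)| \, \langle \mathcal{S} g_{z,\alpha}, E_z\rangle.
\]
Passing to the limit $\alpha \to 0$ on both sides and applying Theorem \ref{Section parameter LSM theorem} on the left and equation \eref{Section FM and a LSM main of LSM theorem LSM=FM} of Theorem \ref{Section FM and a LSM main of LSM theorem} on the right produces
\[
\langle 1/q, E_z\rangle_{L^2(R(z,\epsilon))} = |R(z,\epsilon)| \, \|g_z^{FM}\|^2,
\]
which is exactly the second equality.

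There is no real obstacle here; the substance has already been done in the preceding theorems, and the present statement is just the consistency check that the FM indicator $\|g_z^{FM}\|^2$, the GLSM indicator, and the LSM indicator all encode the same averaged reciprocal contrast, up to the volume factor $|R(z,\epsilon)|$. The only thing to be careful about is to pass the limit inside the inner product with $1_{R(z,\epsilon)}$, which is legitimate because Theorem \ref{Section parameter LSM theorem} already established convergence of $\langle \mathcal{S} g_{z,\alpha}, 1_{R(z,\epsilon)}\rangle$ as $\alpha \to 0$.
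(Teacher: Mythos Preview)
Your proposal is correct and matches the paper's own proof essentially verbatim: the paper simply cites \eref{Section FM and a LSM main of LSM theorem LSM=FM}, \eref{Section FM and a LSM main of LSM theorem GLSM=FM}, Theorem \ref{Section parameter LSM theorem}, and the relation $E_z = \frac{1}{|R(z,\epsilon)|} 1_{R(z,\epsilon)}$. The only cosmetic difference is that you spell out the rescaling step explicitly, whereas the paper leaves it as a one-line remark.
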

 \begin{proof}
 This follows directly from  \eref{Section FM and a LSM main of LSM theorem LSM=FM} and   \eref{Section FM and a LSM main of LSM theorem GLSM=FM},  Theorem \ref{Section parameter LSM theorem}, and $E_z = \frac{1}{|R(z,\epsilon)|} 1_{R(z,\epsilon)}$.
 \proofend\end{proof}
 
 \begin{remark}
 We highlight the fact that Theorem  \ref{Section FM and a LSM main of LSM theorem}  and \ref{Section FM and a LSM main of GLSM theorem} are able to determine the shape (almost) exactly. However our above result on parameter identification is "blind" near the boundary of the obstacle.
 
 \end{remark}
Note that Theorem \ref{Section FM and a LSM main of LSM theorem} is not valid when $q$ is not sign definite. It is mainly an open question in the general case to deal with sign changing contrast. Two types of results exist one \cite{kirsch2008factorization} when it is assume that one know in advance two domains that include respectively positive and negative sign definite contrast and the other \cite{GLSMsignChange} when $q$ changes sign strictly inside the support of the scatterer. The first approach could be extended straightforwardly to our cases however we won't pursue this as it will need additional apriori information. The second approach is not possible as our operator $T_\Omega$ is defined over $L^2(B)$ and not over more regular spaces that allow one to analyse the contribution of $q$ inside $\Omega$ as a compact perturbation.
Yet our results on parameter identification allow us to us to retrieve information even when $q$ changes sign even if the shape identification is not valid.  To do so one needs to introduced,
$$ \tilde q := q_{inf } 1_D + q 1_\Omega, $$  
where $D\subset B$ is a domain that contains $\Omega$. Clearly $\tilde q $ is positive definite its data operator $\tilde \mathcal{N} $  is given by
$$
(\tilde \mathcal{N}g) (t) =  \ \int_D \Big( \Big[ \int_B e^{-i c s \cdot y} g(s) \ind s  \Big] q_{inf }\Big) e^{ict\cdot y} \ind y    + (\mathcal{N}g) (t)=  (\mathcal{N}_{inf}g) (t)+ (\mathcal{N}g) (t)
$$

By applying Theorem \ref{Section parameter LSM theorem} to $\tilde \mathcal{N} $, one can retrieve information on $q$ through the following corollary. In spirit our method is related or inspired by imaging using differential measurements first introduce in \cite{DLSM}, here we compare $\tilde \mathcal{N}$ and $ \mathcal{N}_{inf}$.

\begin{corollary}\label{corollary 1}
We introduce $\{ \tilde \mathcal{R}_\alpha\}_{\alpha>0}$ a family of regularization schemes given by \eref{Section FM and a LSM R_alpha def}-- \eref{Section FM and a LSM f_alpha prop}  where one as substitute $ \mathcal{N} $ by $\tilde \mathcal{N} $and set $\tilde g_{z,\alpha}  = \tilde \mathcal{R}_\alpha \phi_z$. Then it holds for any  $\overline{R(z,\epsilon)}  \subset  D$ that
\begin{equation}\label{Section parameter LSM theorem identity tilde 1}
\lim_{\alpha \to 0}\langle \mathcal{S} \tilde g_{z,\alpha}, 1_{R(z,\epsilon)}  \rangle = \langle 1/\tilde q, E_z\rangle_{L^2(R(z,\epsilon))} =  \langle \frac{q_{inf}}{1+q/q_{inf}}, E_z\rangle_{L^2(R(z,\epsilon))} .
\end{equation}
\end{corollary}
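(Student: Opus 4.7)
The plan is to reduce the corollary to a direct application of Theorem \ref{Section parameter LSM theorem}, with the roles of $\Omega$ and $q$ played by $D$ and $\tilde q$ respectively. First I would verify that $\tilde \mathcal{N}$ fits the framework of Section \ref{Section operator}: repeating the calculation of Theorem \ref{Section operator Omega fac theorem} with kernel $\int_D e^{ic(t-s)\cdot y}\tilde q(y)\ind y$ gives the factorization $\tilde \mathcal{N}=\mathcal{S}^*_D\,\mathcal{T}_D^{\tilde q}\,\mathcal{S}_D$, where $\mathcal{T}_D^{\tilde q}f:=\tilde q\,f$ on $L^2(D)$; the additive decomposition $\tilde \mathcal{N}=\mathcal{N}_{inf}+\mathcal{N}$ written in the excerpt is then immediate from linearity of the integral in $\tilde q = q_{inf}1_D + q1_\Omega$. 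Under the natural hypothesis $q+q_{inf}\ge \tilde q_{inf}>0$ a.e.\ on $\Omega$ (which is where the sign-changing $q$ is absorbed), $\tilde q$ meets the positivity assumption \eref{Section parameter q assumption} on $D$, so Proposition \ref{Section operator middle positive definite prop}, Lemma \ref{Section operator Omega range characterization prop}, and hence all the machinery used in the proof of Theorem \ref{Section parameter LSM theorem} carry over verbatim.

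Next I would apply Theorem \ref{Section parameter LSM theorem} directly to $\tilde \mathcal{N}$ and the regularized solution $\tilde g_{z,\alpha}=\tilde\mathcal{R}_\alpha\phi_z$. Since the regularization family $\{\tilde \mathcal{R}_\alpha\}$ is built from the eigensystem of $\tilde \mathcal{N}$ exactly as in \eref{Section FM and a LSM R_alpha def}--\eref{Section FM and a LSM f_alpha prop}, and since the test function $\phi_z$ is unchanged, the hypotheses of the theorem hold for any sampling region with $\overline{R(z,\epsilon)}\subset D$. This yields the first equality
\begin{equation*}
\lim_{\alpha\to 0}\langle \mathcal{S}\tilde g_{z,\alpha},1_{R(z,\epsilon)}\rangle=\langle 1/\tilde q, E_z\rangle_{L^2(R(z,\epsilon))}
\end{equation*}
without further work.

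The remaining step is to rewrite $1/\tilde q$ pointwise on $R(z,\epsilon)$. On $R(z,\epsilon)\cap\Omega$ one has $\tilde q=q_{inf}+q$, so $1/\tilde q=\tfrac{1}{q_{inf}(1+q/q_{inf})}$, while on $R(z,\epsilon)\setminus\overline{\Omega}$ one has $1/\tilde q=1/q_{inf}$, which is exactly the value of $\tfrac{1}{q_{inf}(1+q/q_{inf})}$ with $q=0$. Thus both pieces collapse into the single closed-form expression, giving the second equality of \eref{Section parameter LSM theorem identity tilde 1} (modulo what appears to be a typographical factor $q_{inf}^2$ in the stated right-hand side: the calculation produces $\tfrac{1/q_{inf}}{1+q/q_{inf}}$ rather than $\tfrac{q_{inf}}{1+q/q_{inf}}$).

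I do not see a genuine obstacle here; the only substantive point is checking that assumption \eref{Section parameter q assumption} is inherited by $\tilde q$ on $D$, which is the purpose of the additive shift by $q_{inf}$ and is the whole reason the construction was introduced. Everything else is bookkeeping: extending the domain of the middle operator from $\Omega$ to $D$, and an elementary algebraic simplification of $1/(q_{inf}+q)$.
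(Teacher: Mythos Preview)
Your proposal is correct and follows exactly the paper's approach: the paper simply states that the corollary follows ``by applying Theorem \ref{Section parameter LSM theorem} to $\tilde{\mathcal{N}}$,'' and you have spelled out precisely that reduction, checking that $\tilde q$ satisfies \eref{Section parameter q assumption} on $D$ so the whole machinery transfers. Your observation about the typographical slip in the displayed right-hand side is also correct: $1/\tilde q = 1/(q_{inf}+q)$ equals $\tfrac{1/q_{inf}}{1+q/q_{inf}}$, not $\tfrac{q_{inf}}{1+q/q_{inf}}$.
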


\begin{remark}\label{remark num sign}
It could be of interest numerically to consider $g^{inf}_{z,\alpha}$ associated to $\mathcal{N}_{inf}$, which will give access to :
$$
\lim_{\alpha \to 0}\langle \mathcal{S}g^{inf}_{z,\alpha}, 1_{R(z,\epsilon)}  \rangle = \langle 1/q_{inf}, E_z\rangle_{L^2(R(z,\epsilon))}.
$$
Finally introducing the reconstruction formula
$$
I(z) = \langle \mathcal{S} \tilde g_{z,\alpha}, 1_{R(z,\epsilon)}  \rangle^{-1} - \langle \mathcal{S}g^{inf}_{z,\alpha}, 1_{R(z,\epsilon)}  \rangle^{-1}
$$
which will prove to give better numerical reconstruction.

\end{remark}

%%%%%%%%%%%%%%%%%%%%%%%%%%%%%%%%%%%
\section{An explicit example and numerical preliminaries} \label{Section numerical preliminaries}
In this section, we first study an explicit example to discuss the nature of the inverse problem. Our numerical experiments in shape and parameter identification later on will be based on   the inverse source problem with multi-frequency measurements for a fixed observation direction. This   motivates us to  discuss preliminaries for the computation of PSWFs and the evaluation of integrals involved in the prolate-Galerkin linear sampling method.
 
 \subsection{An explicit example}\label{Section numerics subsection explicit example}
In this section, we study an explicit example in one dimension where $q$ is constant one supported in $(-r,r) \subset (-1,1)$. In this regard, one can directly show that for all $j=0,1,\cdots$,
\begin{eqnarray*}
&&\big[ \mathcal{N}  \psi_j(\cdot;c r ) \big](t)  
=  \int_B \Big( \Big[ \int_B e^{-i c s \cdot y} \psi_j(s;c r ) \ind s  \Big] 1_{B(0,r)} (y) \Big) e^{ict\cdot y} \ind y \\
&=&  \int_{ B(0,r) } \Big( \Big[ \int_B e^{-i c s \cdot y} \psi_j(s ;c r) \ind s  \Big]   \Big) e^{ict\cdot y} \ind y  \quad (\mbox{change of variable } y \to y'r)\\
&=& r \int_{B} \Big( \Big[ \int_B e^{-i cr s \cdot y'} \psi_j(s ;c r) \ind s  \Big]   \Big) e^{icr t\cdot y'} \ind y' \\
&=&  r \overline{\lambda_j(cr)}  \int_B  \psi_j(y';cr)  e^{icrt\cdot y'} \ind y' =  r|\lambda_j(cr)|^2   \psi_j(t;cr)
\end{eqnarray*}
i.e.,
\begin{equation*}%\label{Section numerics subsection explicit example N eigensystem}
 \mathcal{N}  \psi_j(\cdot ;cr) =  r |\lambda_j(cr)|^2 \psi_j(\cdot ;cr), \quad j=0,1,\cdots
\end{equation*}
where $\lambda_j(cr)$ is given by \eref{Section PSWF multi-d psi_n def} 
with $c$ replacing by $cr$. This gives the explicit eigensystem of $ \mathcal{N} $ for this particular case. This example is extremely simple but delivers several important messages.

The first message is that the inverse problem is challenging as evidenced by the super fast decay of the prolate eigenvalues (see, for instance,  \cite[equation 2.17]{wang10}) where
\begin{equation*} 
|\lambda_n(cr)| \sim e^{(n+1/2) \left(\log\frac{ecr}{4}-\log(n+1/2) \right)}, \quad n \gg 1.
\end{equation*}
This indicates that the smaller the radius $r$, the more ill-conditioned the inverse problem; it also indicates that the number of  eigenvalues (say for a range of $c\in(5,100)$) larger than machine epsilon is  limited (and we will see more in the numerical examples).

The second message is that one is necessarily led to the computation of the  eigensystem $\{\psi_j(\cdot ;cr), \lambda_j(cr)\}_{j=0}^\infty$ for this particular case and in general one needs appropriate quadrature rules for evaluating the integrals involved in the implement of LSM. Note that the PSWFs can be approximated by truncated Legendre series \cite{boyd05code}, the Legendre-Gauss-Lobatto (LGL) quadrature rule is a decent method (which requires more quadrature nodes than other prolate based Gaussian quadrature rules such as \cite{boyd05code}) that at least serves our needs in this paper. Note also that there exist Gaussian quadrature rules such as \cite{boyd05code} but this requires a little more computational efforts. However note that equidistant quadrature nodes may be less efficient  for approximating some integral equations (cf. \cite[Example 1.16]{Kirsch21}).

In the next subsections, we discuss the numerical approximation of the PSWFs eigensystem and introduce the Legendre-Gauss-Lobatto (LGL) quadrature rule for numerical evaluation of integrals involved in the implement of LSM. 
%%%%%%%%%%%%%%%%%%%%%

\subsection{Legendre polynomials and Legendre-Gauss-Lobatto  quadrature} \label{Section Legendre}
In the following we  introduce the Legendre-Gauss-Lobatto (LGL) quadrature rule with $N_q$ points that integrates all polynomials of degree less than or equal to $2{N_q}-3$ exactly (see \cite[Section 10.1--10.4]{quarteroni2000book} for more details).  Denote by $P_n(x)$ the Legendre polynomial of degree $n$ which satisfies the following recurrence relation
\begin{equation*}
\hspace{-5em}P_0(x)=1, \quad P_1(x) = x, \quad P_{n+1}(x) = \frac{1}{n+1}\left[ (2n+1)xP_n(x) - n P_{n-1}(x) \right], \quad n=1,2,\cdots,
\end{equation*}
and let $\overline{P}_n(x) := P_n(x) \sqrt{n+1/2}$ (where the overline bar associated with $P_n$ is not supposed to be confused with the conjugation) be the normalized Legendre polynomial where
\begin{equation} \label{Section Legendre orthogonality}
\int_{-1}^1 \overline{P}_m(x) \overline{P}_n(x)\ind x = \delta_{mn},
\end{equation}
with $\delta_{mn}$ denoting the Kronecker delta.

Let $x_0, \cdots, x_{N_q-1}$ be $N_q$ given distinct points over the interval $[-1,1]$, for the approximation of $I(f) = \int_{-1}^1 f(x) \ind x$, we consider quadrature rules of the type
\begin{equation*}
I_{N_q-1}(f) = \sum_{j=0}^{N_q-1} w_j f(x_j),
\end{equation*}
 where the points $x_j \in \mathbb{R}$ and coefficients $w_j \in \mathbb{R}$ are referred to as the nodes and weights of the quadrature, respectively.  The Legendre-Gauss-Lobatto (LGL) quadrature rule has nodes and weights given by
 \begin{eqnarray*} 
&&x_0=-1,\quad x_{N_q-1}=1, \quad x_j \mbox{ zeros of } P_{N_q-1}'(x), \quad j=1,\cdots,{N_q}-2, \label{Section Legendre LGL nodes}\\
&& w_j = \frac{2}{N_q(N_q-1)} \frac{1}{[P_{N_q-1}(x_j)]^2}. \label{Section Legendre LGL weights}
\end{eqnarray*}
The Legendre-Gauss-Lobatto quadrature rule, which includes the end points $-1$ and $1$, has degree of exactness $2{N_q}-3$, i.e., the quadrature formula integrates all polynomials of degree less than or equal to $2{N_q}-3$.
%%%%%%%%%%%%
\subsection{Computation of PSWFs  system} \label{Section PSWFs computation}
One can approximate the PSWFs by the Legendre-Galerkin method and the coefficients are determined by solving a linear system with a symmetric, tridiagonal matrix. It is based on another remarkable property of PSWFs that
 $\{{\psi}_n\}_{n=0}^\infty$ are also eigenfunctions to the following Sturm-Liouville differential operator (cf. \cite[Section V]{Slepian61} or \cite[equation 2.1]{wang10})
\begin{equation}\label{Section PSWF 1d Dc eigensystem}
 \mathcal{D}^c_d {\psi}_n(\cdot;c)  ={\chi}_n(c) {\psi}_n(\cdot;c), \quad n=0,1,\cdots,
\end{equation}
where $ \mathcal{D}^c$ is the Sturm-Liouville differential operator given by
\begin{equation*}% \label{Section PSWF 1d Dc def}
\mathcal{D}^c_d := -\frac{d}{dx}(1-x^2) \frac{d}{dx} +c^2x^2, \quad x\in(-1,1), \quad d=1.
\end{equation*}
Here the corresponding Sturm-Liouville eigenvalues $\{{\chi}_n(c)\}$ are ordered in strictly increasing order and they satisfy 
\begin{equation*} %\label{Section PSWF 1d Dc chi bounds}
n(n+1) <  {\chi}_n(c) <n(n+1) +c^2.
\end{equation*}

In particular to approximate the first $N+1$ PSWFs and Sturm-Liouville eigenvalues $\{\psi_n, \chi_n\}_{n=0}^N$, following \cite[Section 2]{boyd05code}, one expands
\begin{equation}  \label{Section PSWFs computation psi_n Legendre expansion}
\psi_n(x;c) \approx \sum_{j=0}^{N_t-1} B_{jn} \overline{P}_j (x), \quad n=0,1,\cdots,N,
\end{equation}
where $N_t$ determines the truncation of the Legendre series. We then substitute this expansion into the Sturm-Liouville problem \eref{Section PSWF 1d Dc eigensystem}  (and note that the Legendre polynomials satisfies this equation when $c=0$)   to get the linear system 
\begin{equation*} %\label{Section PSWFs computation linear system}
D b  = \chi_n^{approx} b 
\end{equation*}
where $\chi_n^{approx}$ is an approximation of the exact eigenvalue $\chi_n$, $b_j = B_{jn} $, and the matrix $D$ has non-zero entries given by
\begin{eqnarray*} 
D_{jj} &=& j(j+1)+ c^2 \frac{2j(j+1) -1}{(2j+3)  (2j-1)}, \quad j=0,1,\cdots \label{Section PSWFs computation matrix A def 1}\\
D_{j(j+2)} =D_{(j+2)j} &=& c^2 \frac{(j+1)(j+2)}{(2j+3) \sqrt{ (2j+1) (2j+5) }},\quad j=0,1,\cdots. \label{Section PSWFs computation matrix A def 2}
\end{eqnarray*}
\cite[Section 2]{boyd05code} suggested a truncation with $N_t \ge 2N+30$ to have a good approximation of the eigenvalue $\chi_n(c)$ and $\psi_n(\cdot;c)$.

After the evaluation of the PSWFs, one can compute the prolate eigenvalues as follows (cf. \cite[Section 2]{wang10}). First set $x=0$ in equation \eref{Section PSWF multi-d psi_n def}   to get
\begin{equation*}  %\label{Section PSWFs computation lambda_n computation 1}
 \lambda_n(c)  \psi_n(0;c) =  \int_{-1}^1   \psi_n(y;c) \ind y \approx \sqrt{2} B_{0n},
\end{equation*}
where in the last step we applied \eref{Section PSWFs computation psi_n Legendre expansion} and \eref{Section Legendre orthogonality}. Note  that $\psi_n(x;c)$ is even for even $n$ and is odd for odd $n$ (see, for instance,  \cite[Section 2]{wang10}), thereby $\psi_n(0;c)$ vanishes for odd $n$ and we first get the approximation for eigenvalues with even indexes by
\begin{equation}  \label{Section PSWFs computation lambda_n computation even}
\lambda_{n}(c)  \approx     \frac{\sqrt{2} B_{0n}}{ \psi_n(0;c)}, \quad n \mbox{ even}.
\end{equation}
Similarly differentiating  \eref{Section PSWF multi-d psi_n def} allows us to get for odd $n$ that
\begin{equation}  \label{Section PSWFs computation lambda_n computation odd}
\lambda_{n}(c)  = \frac{ic}{  \partial_x \psi_n(0;c)} \int_{-1}^1 y \psi_n(y;c) \ind y \approx  \sqrt{\frac{2}{3}}\frac{ic B_{1n}}{  \partial_x \psi_n(0;c)}, \quad n \mbox{ odd}.
\end{equation}
In this paper the formulas \eref{Section PSWFs computation lambda_n computation even}--\eref{Section PSWFs computation lambda_n computation odd} are sufficient to help us implement the linear sampling method.

%%%%%%%%%%%%%%%%%%%%%%
\subsection{A prolate-based formulation of the linear sampling method} \label{Section reduced LSM}
Note that we have highly accurate algorithms to compute the (disk) PSWFs system, 
in this section we propose a prolate-based formulation of LSM. To begin with, let $\mathbbm{J} \subset \mathbbm{N}_d$ be the following set
\begin{equation*}%\label{Section reduced LSM set J}
\mathbbm{J}:=\{0,1,\cdots,J\} \quad (d=1) \quad \mbox{and}\quad  \mathbbm{J}:=\{(0,0,0),(0,0,1),\cdots,J\} \quad (d=2)
\end{equation*}
where we simply identify $J$ as the maximum index (which is a scalar index in one dimension and a multiple index in two dimensions).
We consider a prolate-Galerkin formulation by
\begin{equation} \label{Section reduced LSM linear system}
\sum_{j\in\mathbbm{J}} A^{\mathbbm{J}}_{j\ell} g^{\mathbbm{J}}_{zj} = \phi^{\mathbbm{J}}_{z\ell}, \quad \ell \in \mathbbm{J},
\end{equation}
where the data operator $\mathcal{N} $ and the z-dependent function $\phi_z$ gives the matrix and the right hand side by
\begin{equation} \label{Section reduced LSM matrix A def}
A^{\mathbbm{J}}_{j\ell}: = \langle \mathcal{N} \psi_j(\cdot,;c),  \psi_\ell(\cdot,;c) \rangle, \quad  j,\ell \in \mathbbm{J}, \quad\mbox{and}\quad \phi^{\mathbbm{J}}_{z\ell}=  \langle \phi_z,  \psi_\ell(\cdot,;c) \rangle, \quad  \ell \in \mathbbm{J}.
\end{equation}
This is a prolate-based formulation of the linear sampling method where we seek a reduced solution  in the span of (disk) PSWFs $\{\psi_j(\cdot;c)\}_{j\in \mathbbm{J}}$.
We further define
\begin{equation*} %\label{Section reduced LSM gJz phiJz def}
g^{\mathbbm{J}}_{z}:= \sum_{j\in\mathbbm{J}}g^{\mathbbm{J}}_{zj} \psi_j(\cdot;c), \quad \phi^{\mathbbm{J}}_{z}:= \sum_{j\in\mathbbm{J}}\phi^{\mathbbm{J}}_{zj} \psi_j(\cdot;c).
\end{equation*}

Let $g^{\mathbbm{J}}_{z,\alpha}$ be a family of regularized solution  obtained by regularizing \eref{Section reduced LSM linear system} with a family of regularization schemes (see, for instance,   Section \ref{Section FM and a LSM}; standard schemes include such as the Tikhonov regularization and the singular value cut off), then
according to Theorem \ref{Section FM and a LSM main of LSM theorem}, it is expected that the indicator function
$$
\langle \mathcal{S} g^{\mathbbm{J}}_{z,\alpha}, 1_{R(z,\epsilon)}  \rangle
$$
remains bounded as $\mathbbm{J} \to \mathbbm{N}_d$ and $\alpha \to 0$ for $\overline{R(z,\epsilon)} \subset \Omega$ and cannot be bounded as $\mathbbm{J} \to \mathbbm{N}_d$ and  $\alpha \to 0$  for $R(z,\epsilon)    \not\subset  \overline{\Omega}$.   Moreover according to Theorem \ref{Section parameter LSM theorem}, 
$$
\langle \mathcal{S} g^{\mathbbm{J}}_{z,\alpha}, 1_{R(z,\epsilon)}  \rangle \to  \langle 1/q, E_z\rangle_{L^2(R(z,\epsilon))}, \quad  \forall \overline{R(z,\epsilon)}\subset \Omega,
$$
as $\mathbbm{J} \to \mathbbm{N}_d$ and $\alpha \to 0$. It is also possible to establish a semi-explicit convergence result for parameter identification with both noiseless and noisy data, this is  ongoing work and will be reported in a forthcoming paper. Finally we remark that the prolate-based linear sampling method shares a similar spirit to the modal formulation of the linear sampling method in waveguide \cite{bourgeois2008linear}; see also \cite{meng2021}.
%%%%%%%%%%%%%%%%%%%%%%%%%

\section{Numerical experiments for parameter and shape identification} \label{Section numerical examples}
To demonstrate the   shape and parameter identification theory, in this section we perform relevant numerical experiments for the inverse source problem with multi-frequency measurements for a fixed observation direction.  The inverse source model was given by Section \ref{Model inverse source fixed observation} and we in particular consider the following different parameters that can be divided into the following four types:
\begin{enumerate}
\item Constant $q$. This can be obtained by a constant source $f$ supported in a square given by $(-r,r)\times (-r,r)$ and $\hat{x}=(1,0)^T$. Here $0<r<1$. This leads to 
\begin{equation}\label{Section numerical example q type 1}
q(s)=\mathcal{R} f (\hat{x},s)=  2 r 1_{(-r,r)}(s).
\end{equation}
Note that in this case $q$ is a constant.
\item ``Increasing-decreasing'' $q$. This can be obtained by a constant source supported in a rhombus given by
$$
\{(y_1,y_2): y_1 \in (-r,r), y_2 \in (|y_1|-r,r-|y_1|) \}
$$
 and $\hat{x}=(1,0)^T$ (which is equivalent to a constant source $f$ supported in a square but with a $\pi/4$-rotated observation direction). Here $0<r<1$. This leads to 
\begin{equation}\label{Section numerical example q type 2}
q(s)=\mathcal{R} f (\hat{x},s)=  (2 r-2|s|) 1_{(-r,r)}(s).
\end{equation}
Note that in this case $q$ is increasing in $(-r,0)$ and decreasing in $(0,r)$.
\item ``Decreasing-increasing'' $q$. This can be obtained by a constant source $f$ supported in ``M'' given by
$$
\{(y_1,y_2): y_1 \in (-r,r), y_2 \in (0, 0.5r+0.5|y_1|) \}
$$
 and $\hat{x}=(1,0)^T$ (which is equivalent to a source supported in a square but with non-constant intensities). Here $0<r<1$. This leads to 
\begin{equation}\label{Section numerical example q type 3}
q(s)=\mathcal{R} f (\hat{x},s)=  \big(0.5r+0.5|s| \big) 1_{(-r,r)}(s).
\end{equation}
Note that in this case $q$ is decreasing in $(-r,0)$ and increasing in $(0,r)$.
\item 
Oscillatory $q$. This can be obtained by a constant source $f$ supported in an oscillatory waveguide given by
$$
\{(y_1,y_2): y_1 \in (-r,r), y_2 \in (0, 0.5r - 0.25r \cos(m\pi y_1/r)) \}
$$
 and $\hat{x}=(1,0)^T$ (which is equivalent to a source supported in a square but with oscillatory intensities). Here $m$ is a   positive integer that introduce the oscillatory nature, $0<r<1$. This leads to 
\begin{equation}\label{Section numerical example q type 4}
q(s)=\mathcal{R} f (\hat{x},s) = \big(0.5r - 0.25r \cos(m\pi s/r))\big) 1_{(-r,r)}(s).
\end{equation}
Note that in this case $q$ is   oscillatory in $(-r,r)$.
\end{enumerate}

    \begin{figure}[tbhp]
      \centering
\includegraphics[width=0.3\linewidth]{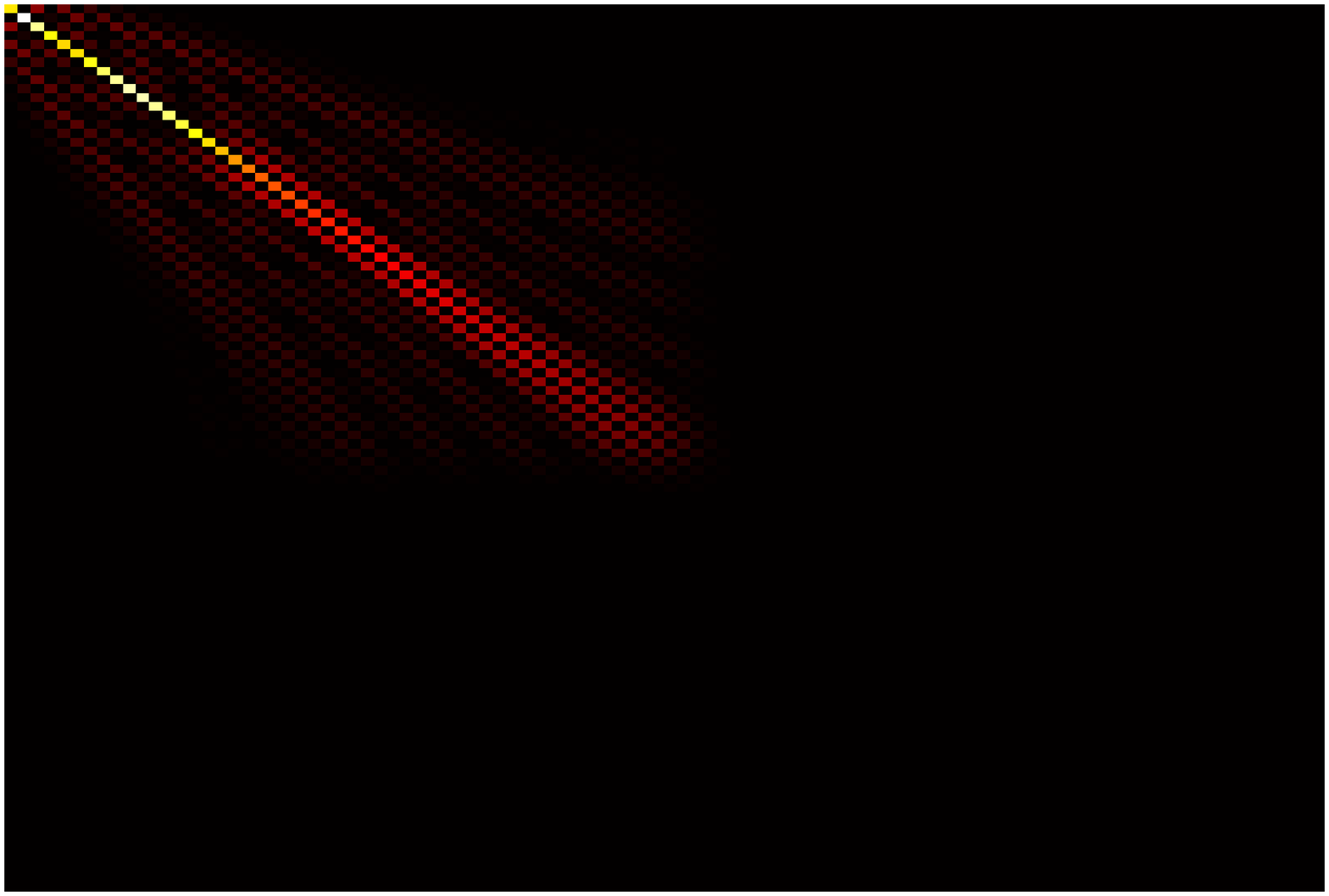}
\includegraphics[width=0.3\linewidth]{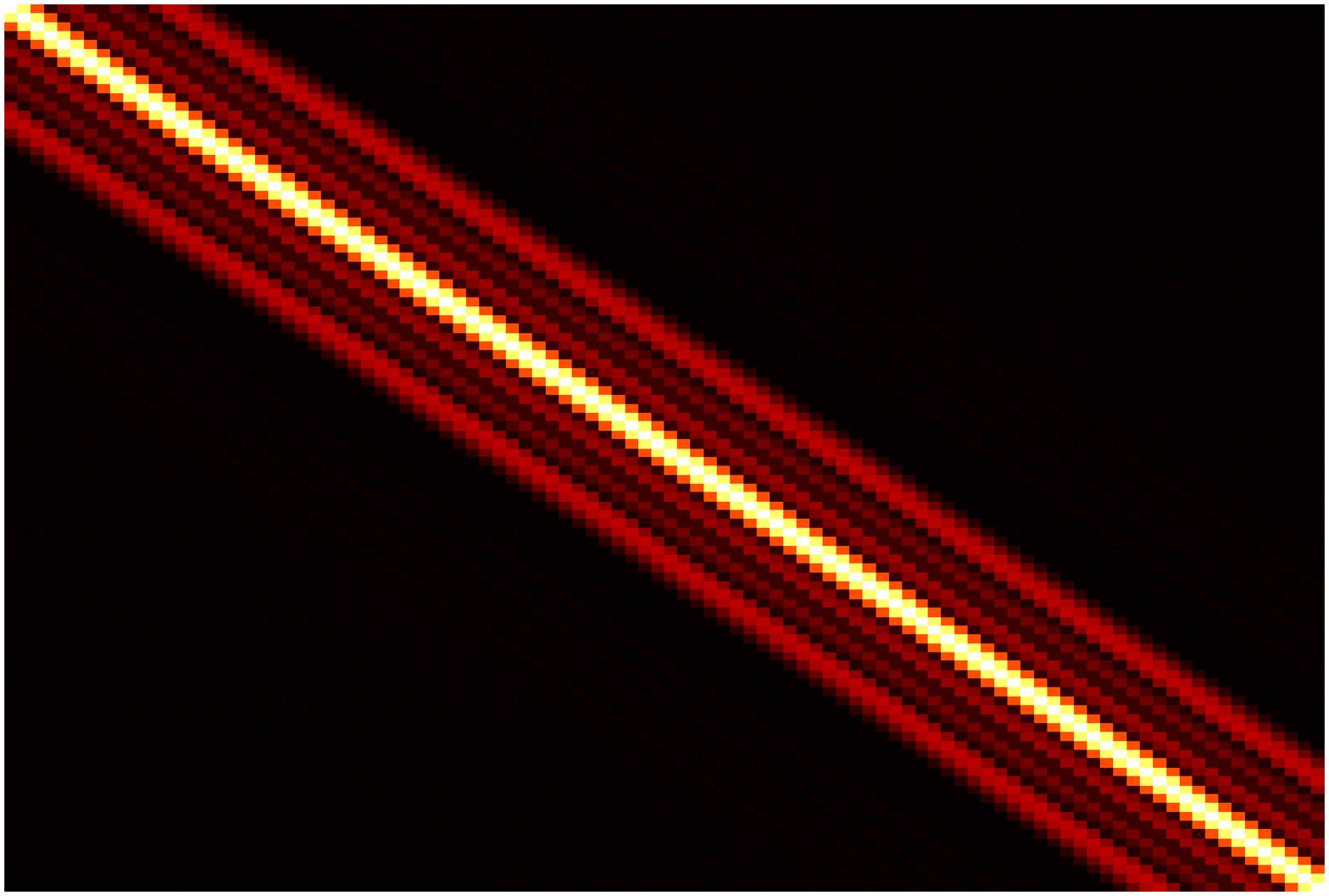}\\

     \caption{
     \linespread{1}
$\|A^{\mathbbm{J}}_{i,j}\| $ on the left and $\|A^{DFT}_{i,j}\| $  on the right  in both case $1\leq i,j \leq 400$}
 \label{Figure Matrix PSWF vs DFT}
    \end{figure}

      \begin{figure}[tbhp]
      \centering
\includegraphics[width=0.4\linewidth]{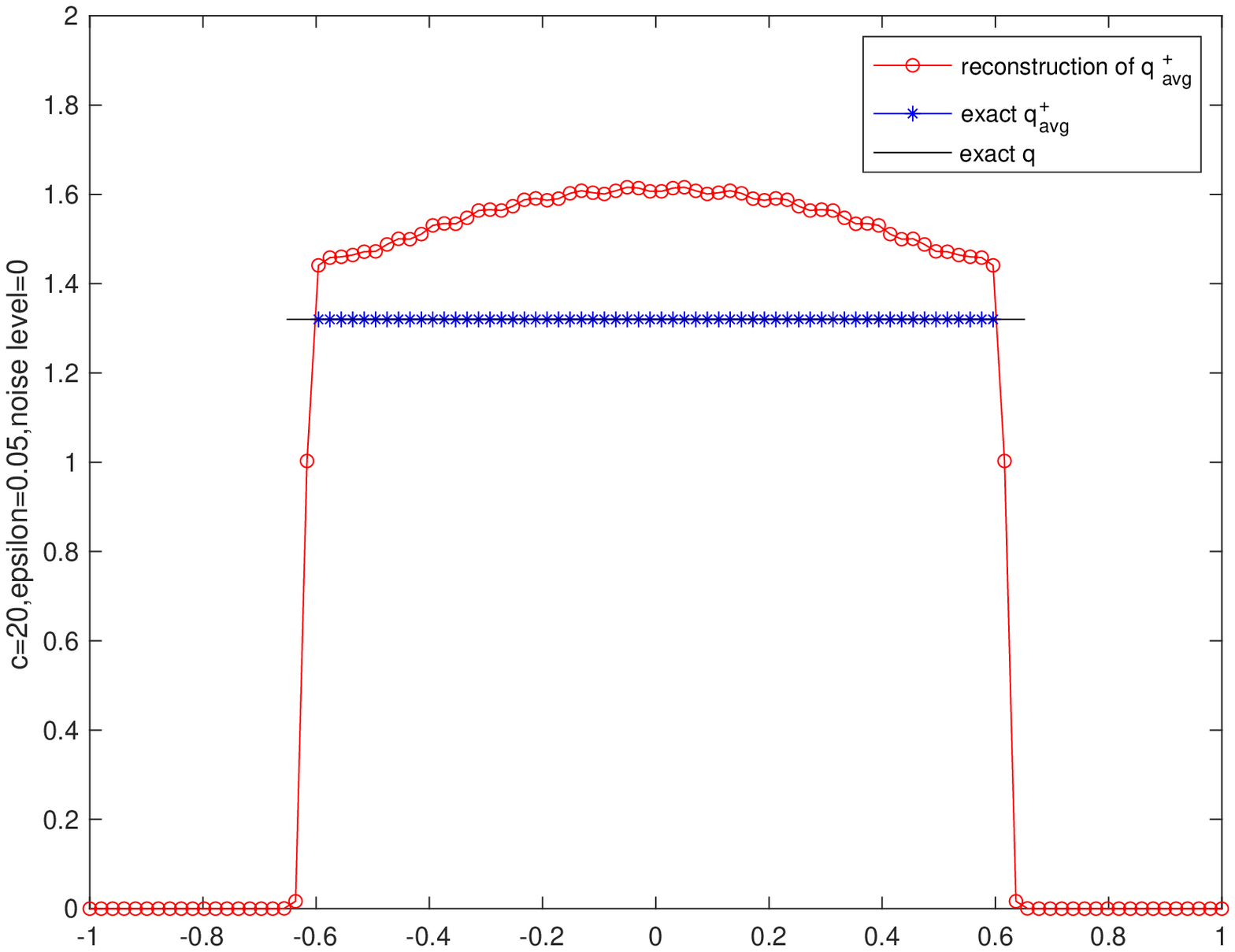}
\includegraphics[width=0.4\linewidth]{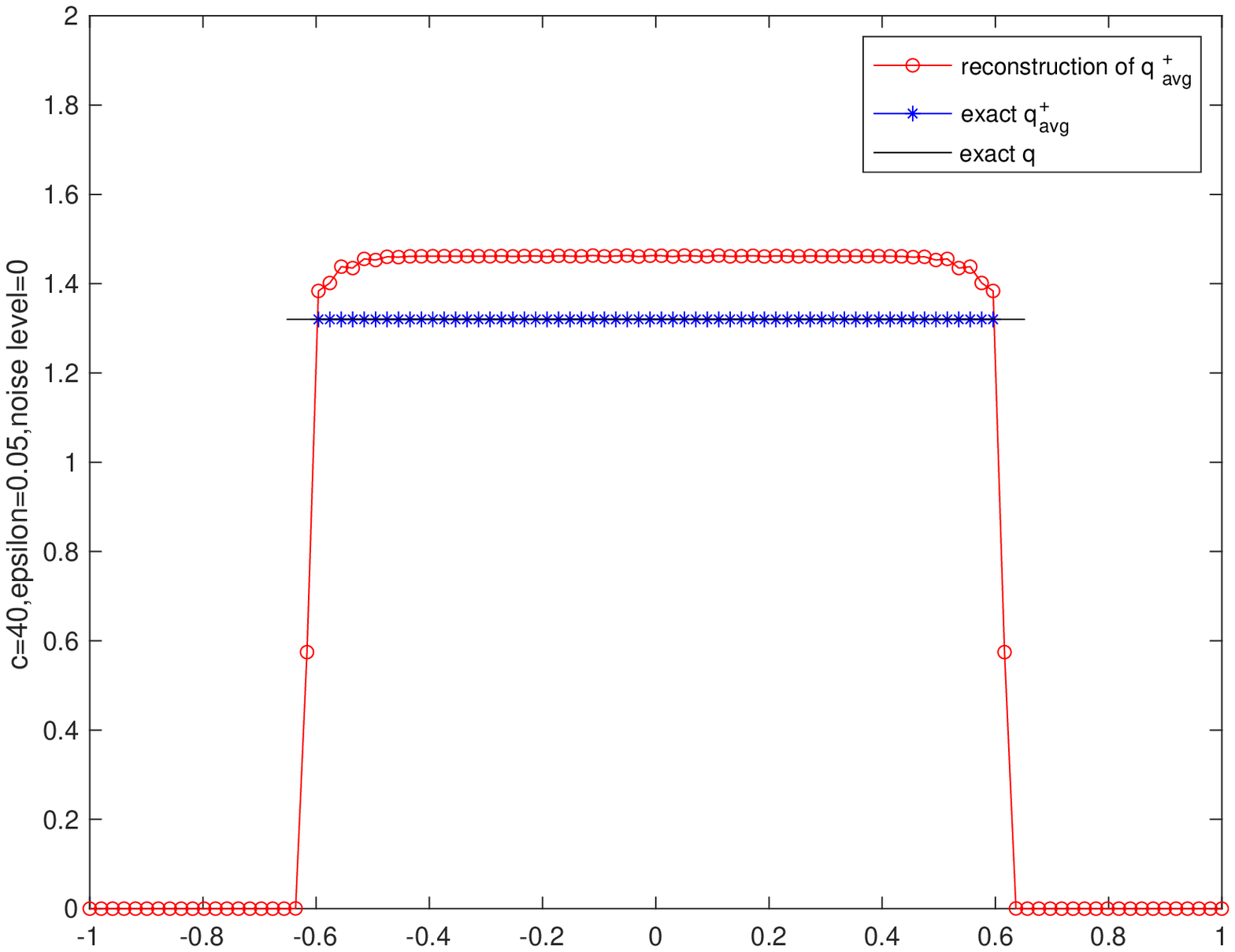}\\
\includegraphics[width=0.4\linewidth]{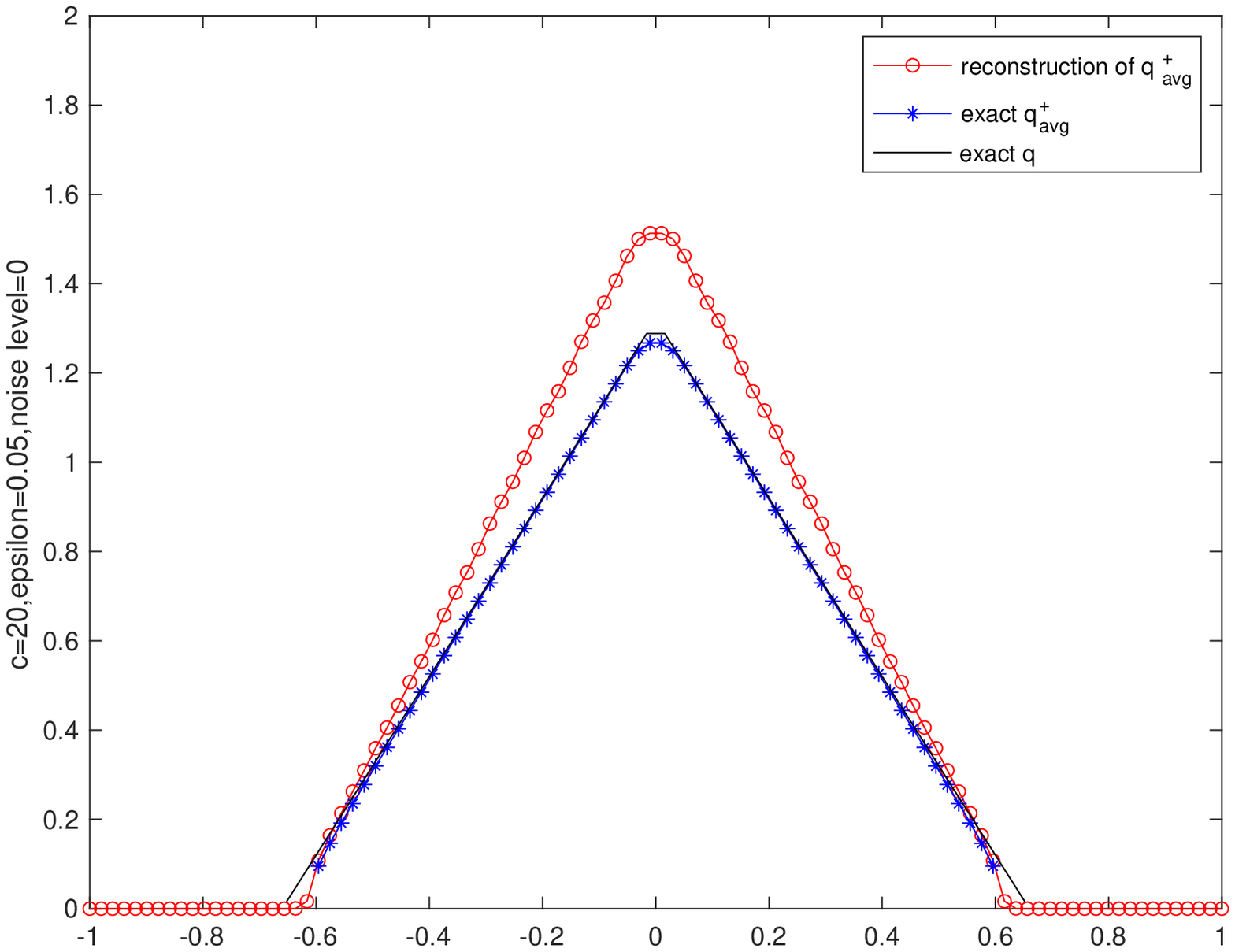}
\includegraphics[width=0.4\linewidth]{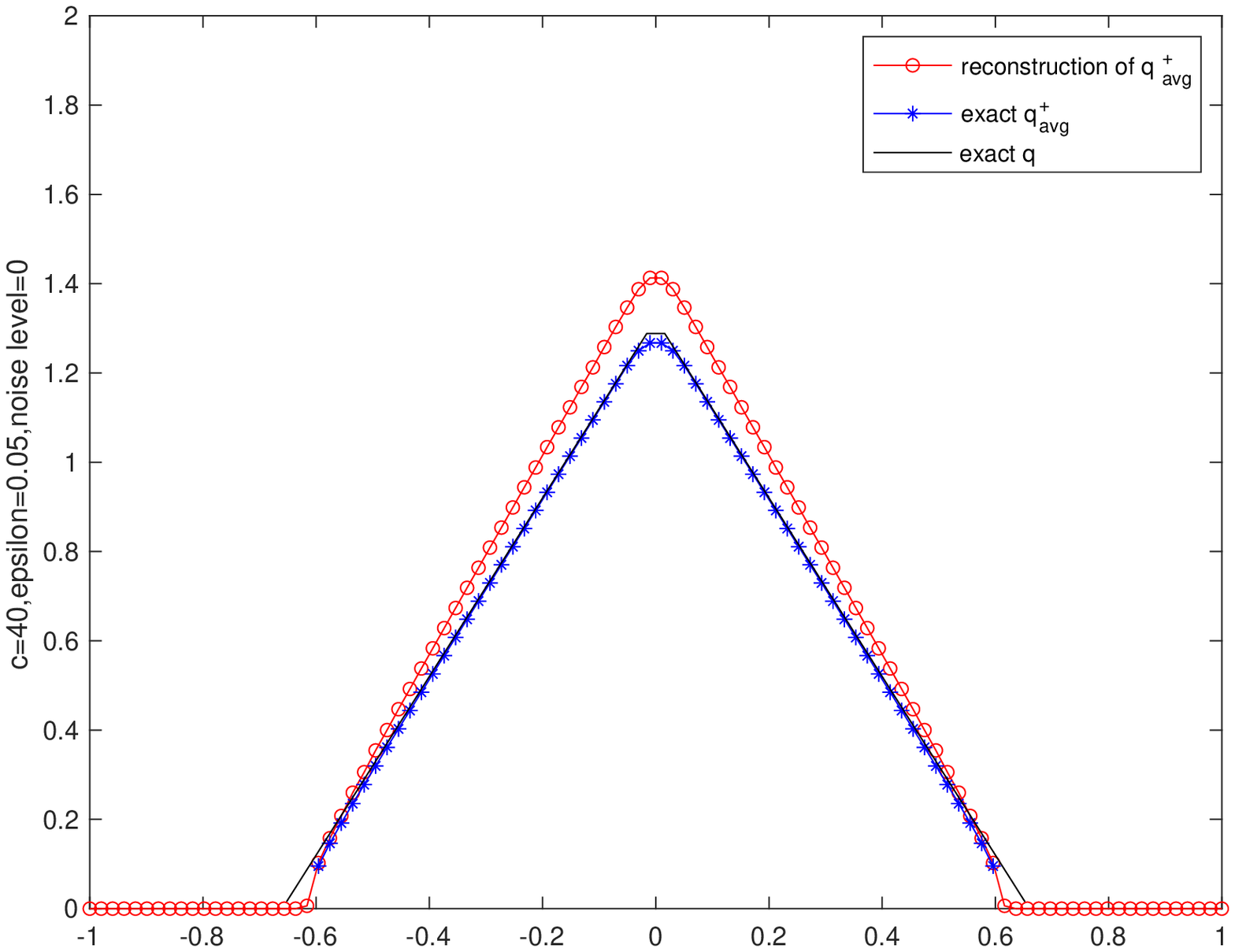}\\
\includegraphics[width=0.4\linewidth]{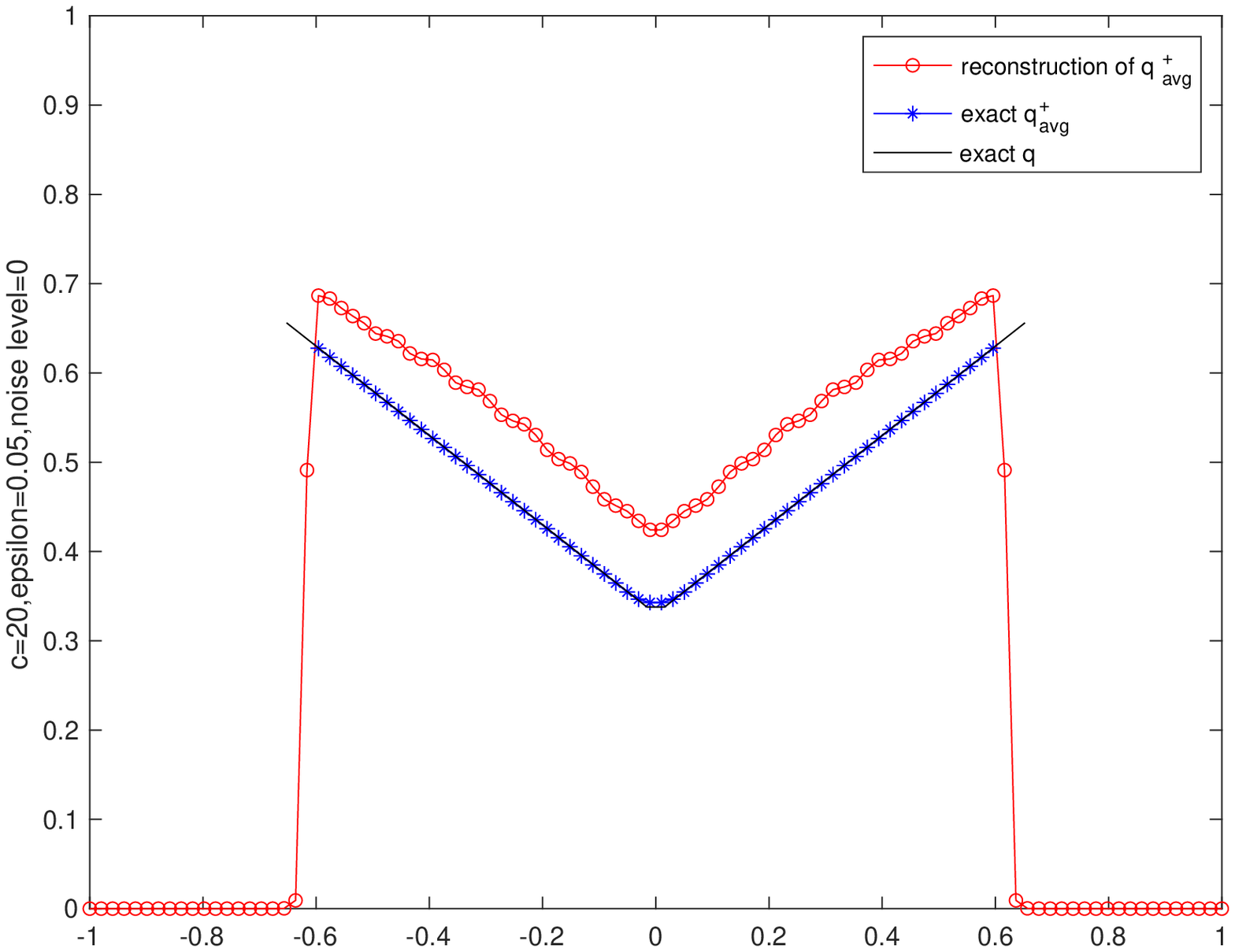}
\includegraphics[width=0.4\linewidth]{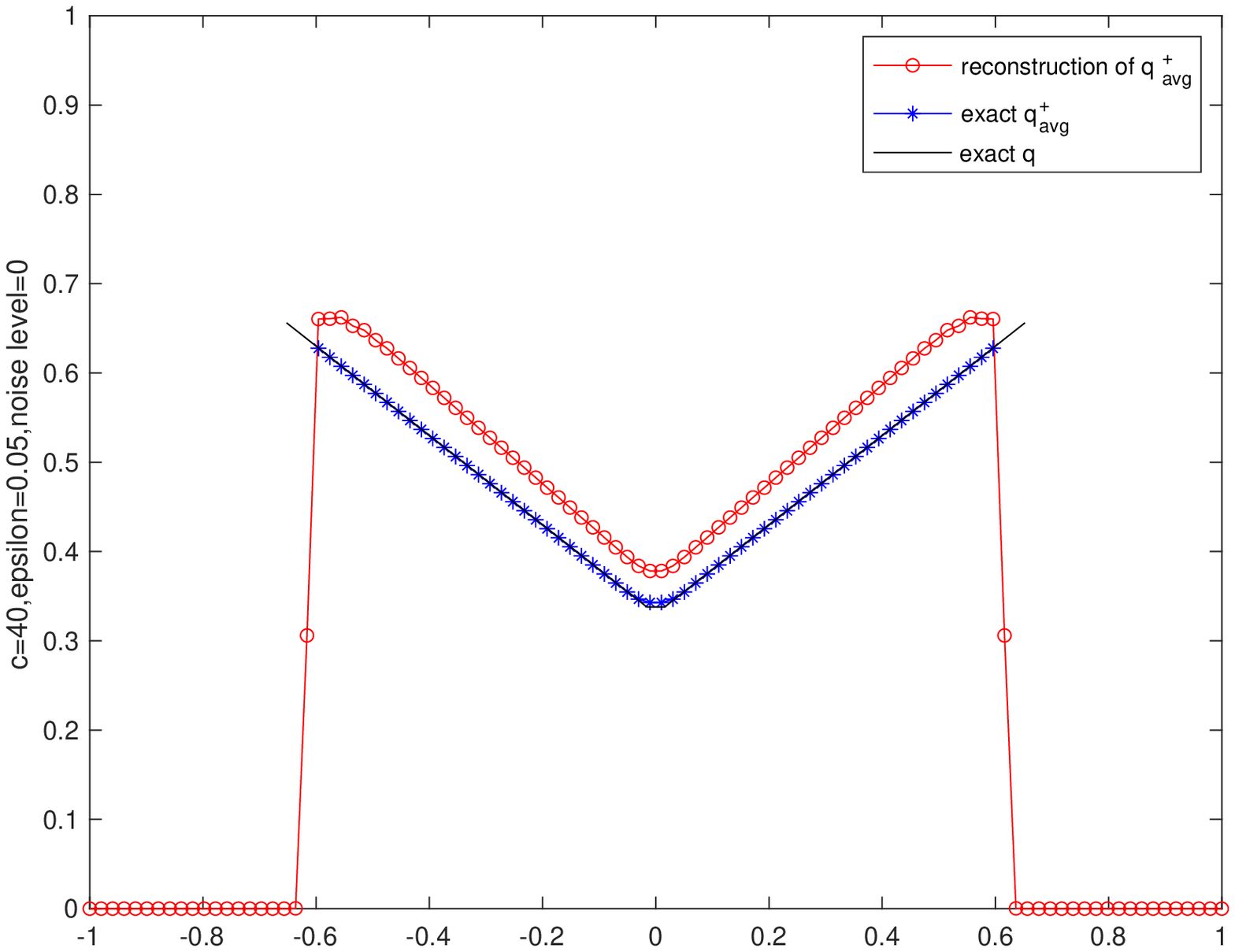}\\
\includegraphics[width=0.4\linewidth]{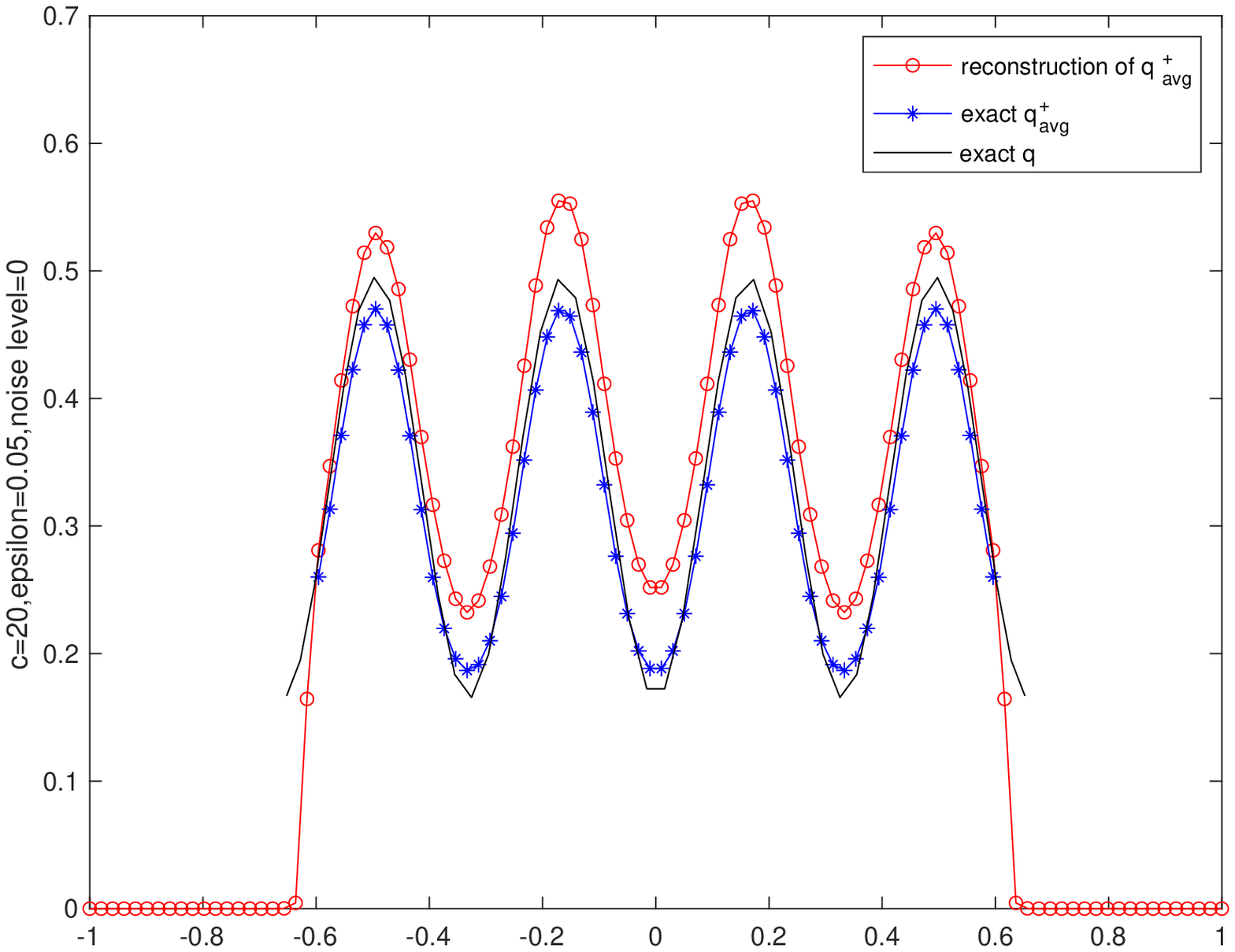}
\includegraphics[width=0.4\linewidth]{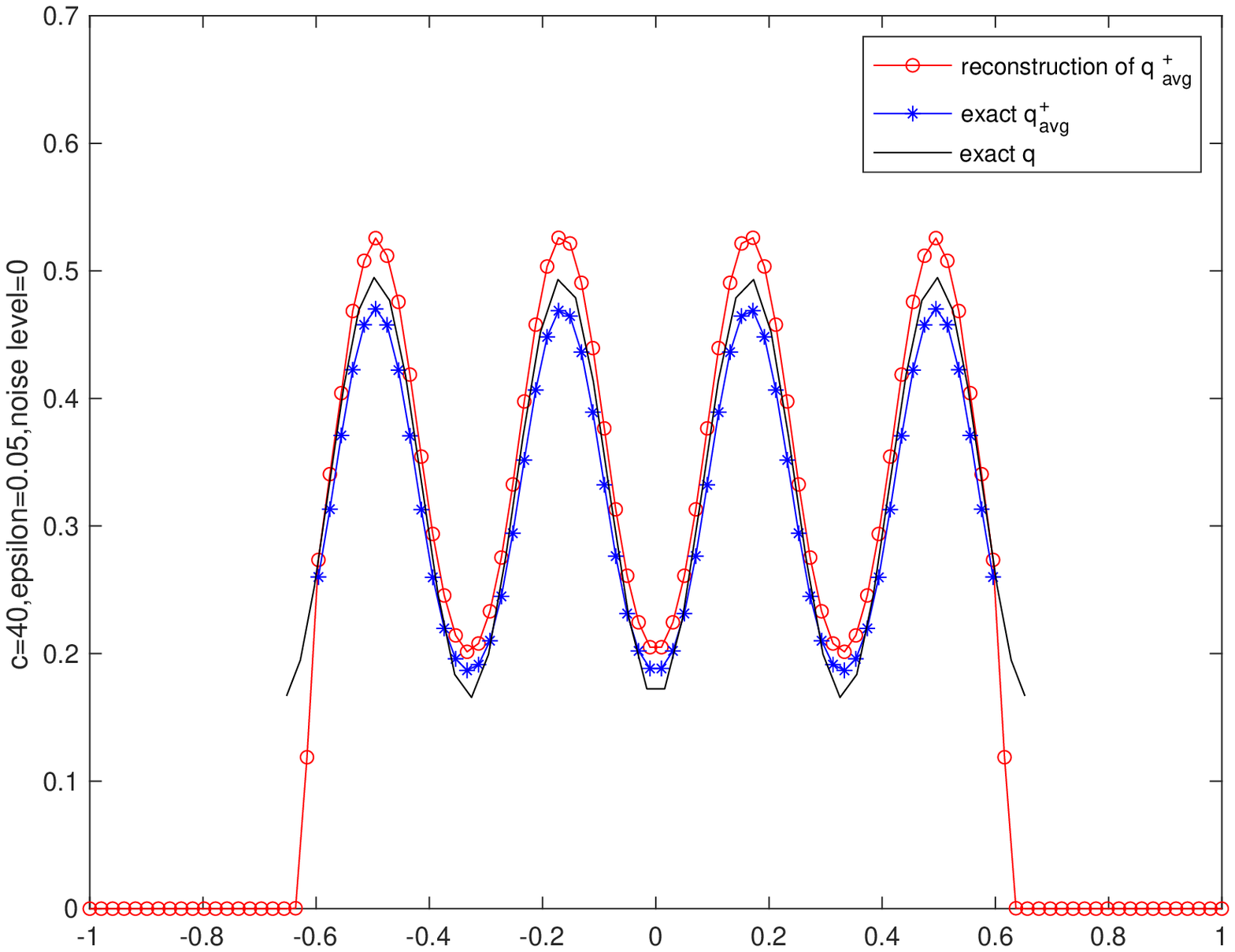}
     \caption{
     \linespread{1}
Plot of $I(z)$, $q_{avg}$, and $q$ for four different types of parameters with noiseless data and $\epsilon=5\times 10^{-2}$. Left column: $\dim(\mathbb{J})=37$. Right column: $\dim(\mathbb{J})=54$. Row $j$ corresponds to type $j$, j=1,2,3,4.
}\label{Section numerical examples subsection parameter figure noiseless}
    \end{figure}

          \begin{figure}[tbhp]
      \centering
\includegraphics[width=0.4\linewidth]{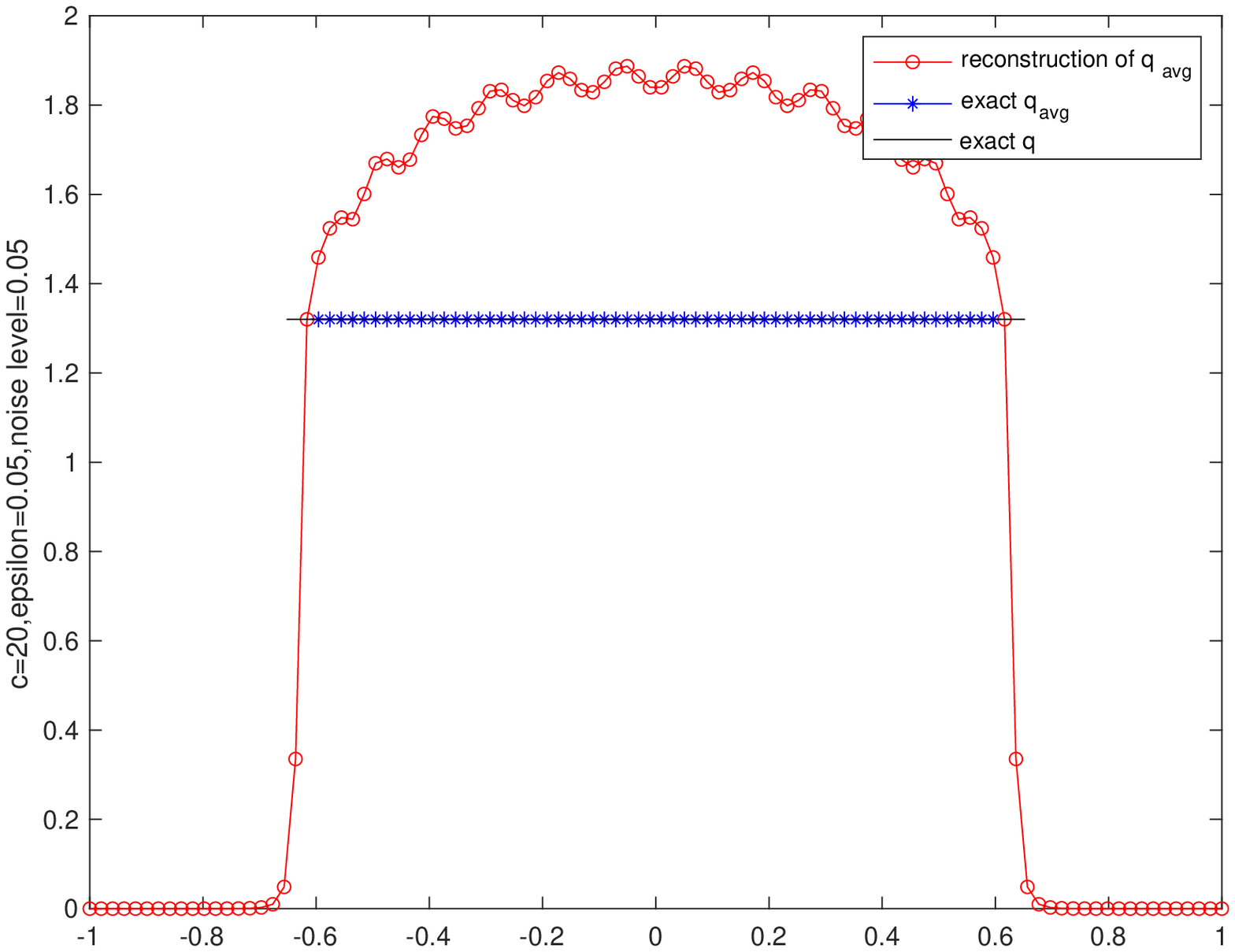}
\includegraphics[width=0.4\linewidth]{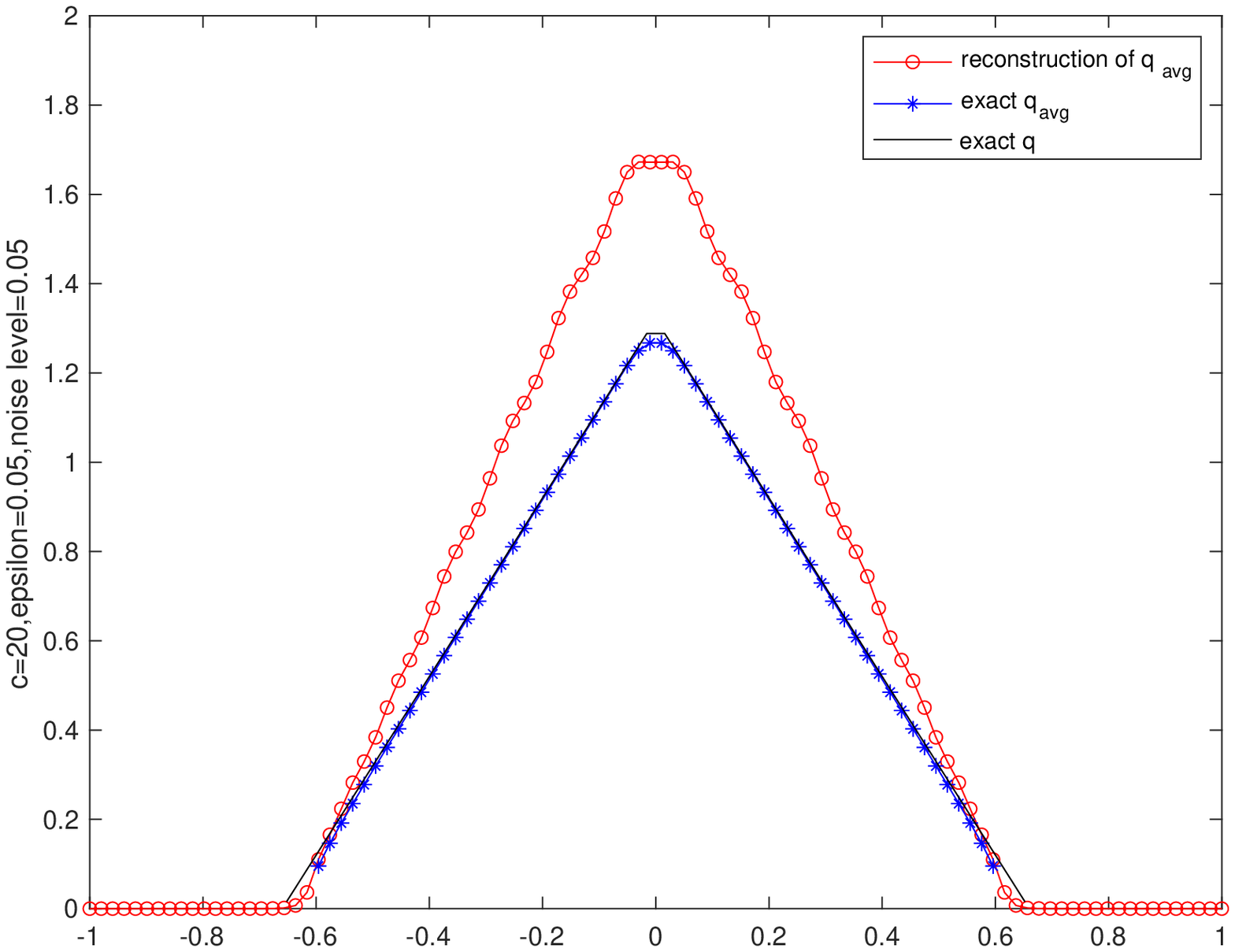}\\
\includegraphics[width=0.4\linewidth]{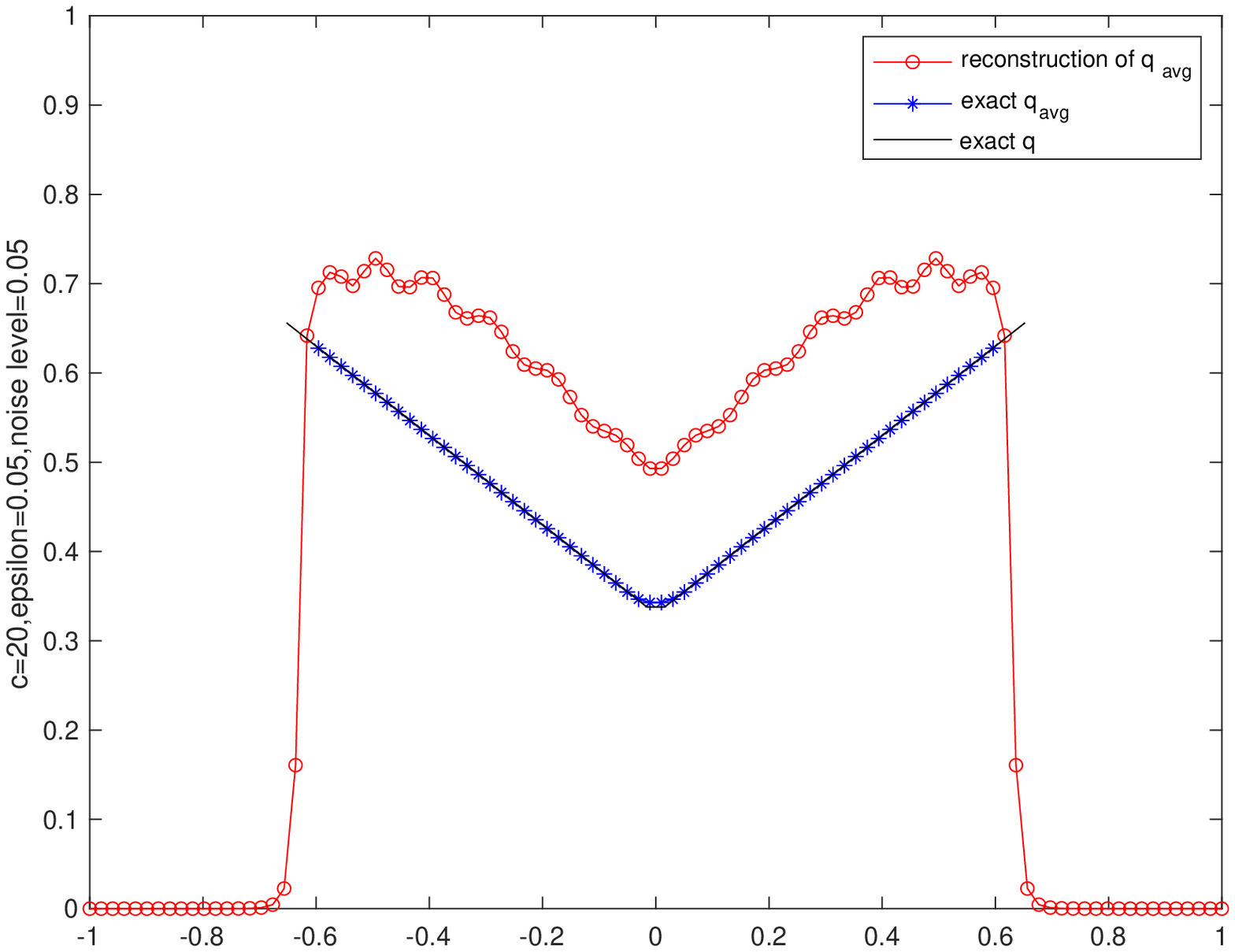}
\includegraphics[width=0.4\linewidth]{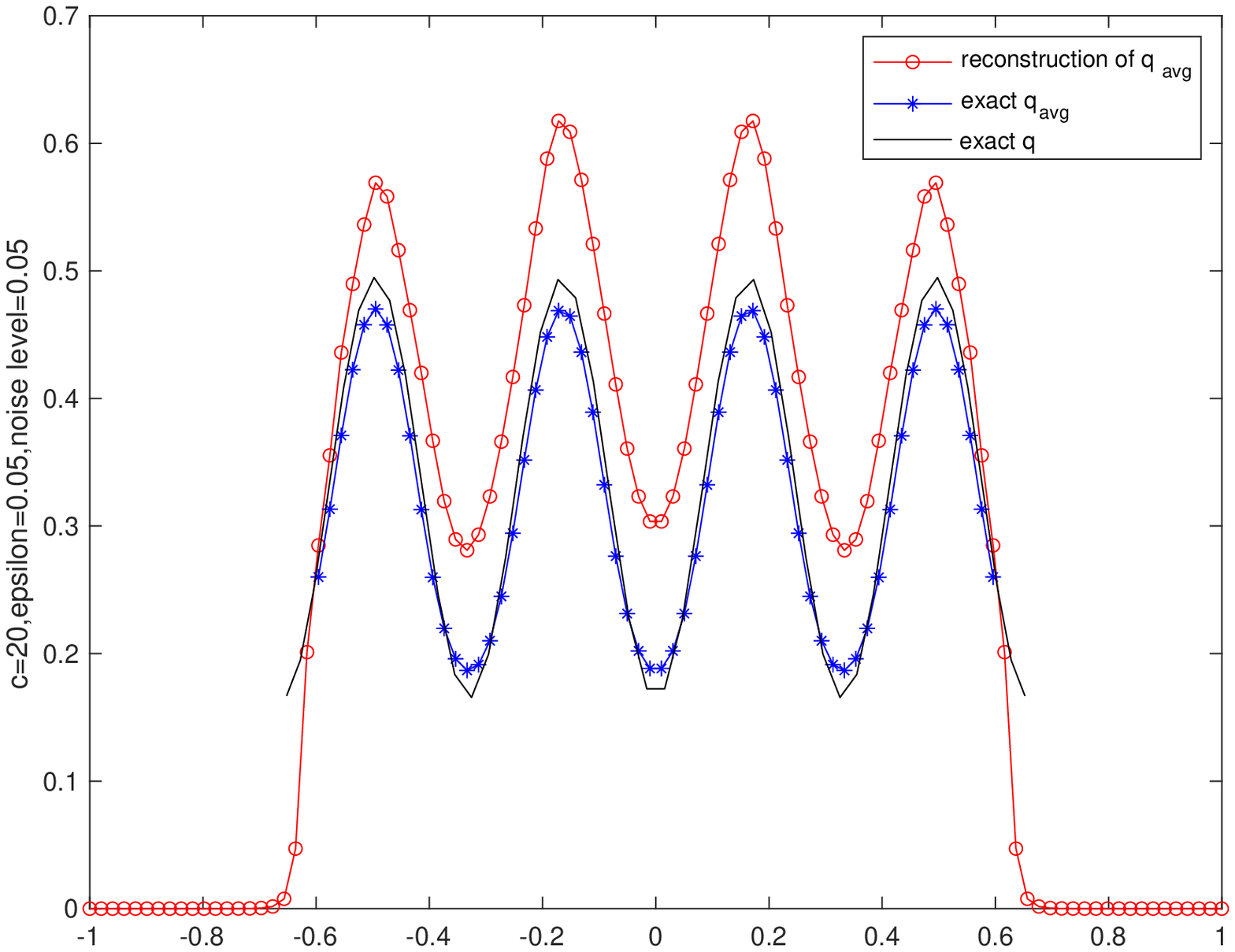}
     \caption{
     \linespread{1}
Plot of $I(z)$, $q_{avg}$, and $q$ for four different types of parameters with noisy level $5\%$. $\epsilon=5\times 10^{-2}$ and $c=20$.
} \label{Section numerical examples subsection parameter figure c20 noisy}
    \end{figure}
    
       \begin{figure}[tbhp]
      \centering
\includegraphics[width=0.4\linewidth]{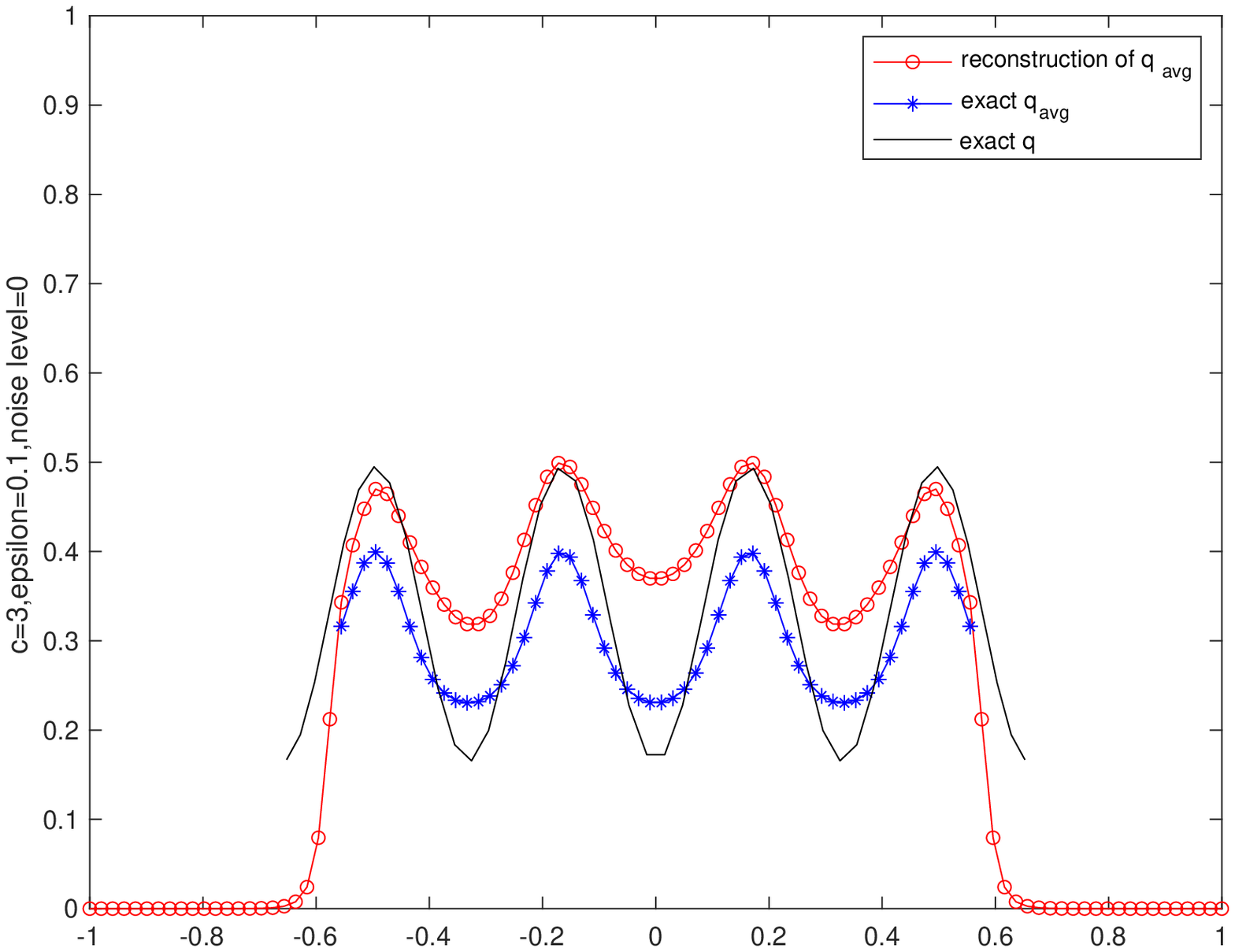}
\includegraphics[width=0.4\linewidth]{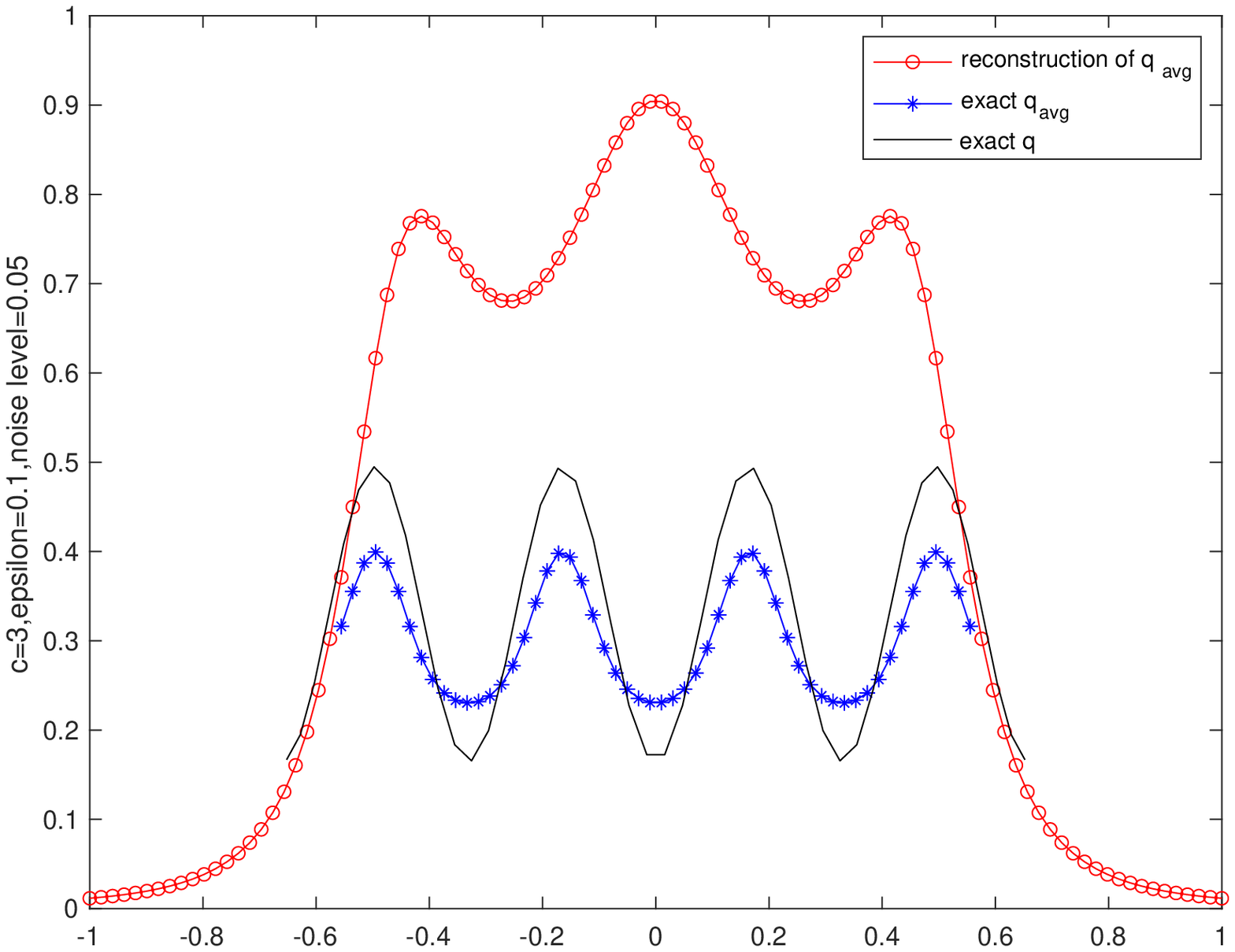}\\
\includegraphics[width=0.4\linewidth]{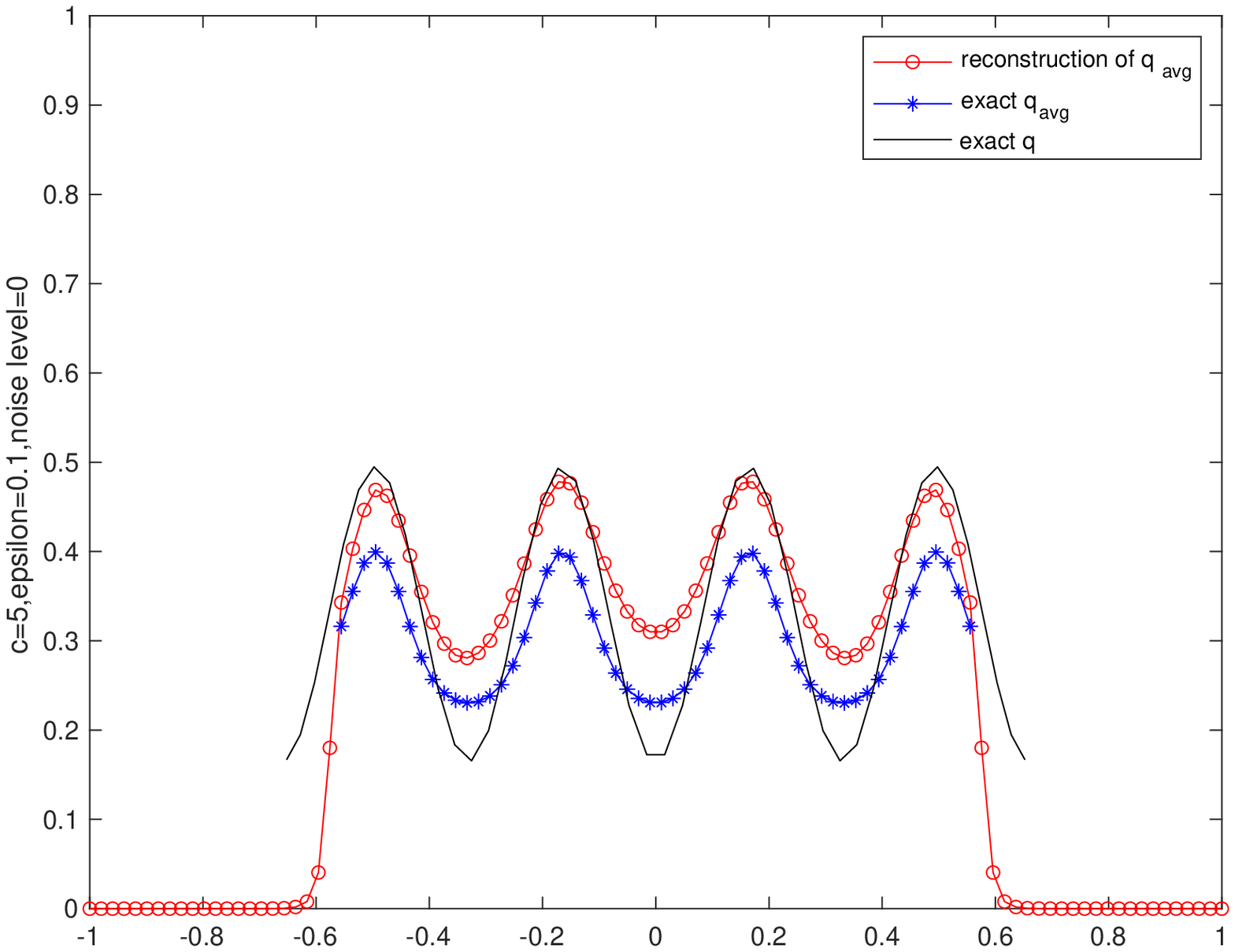}
\includegraphics[width=0.4\linewidth]{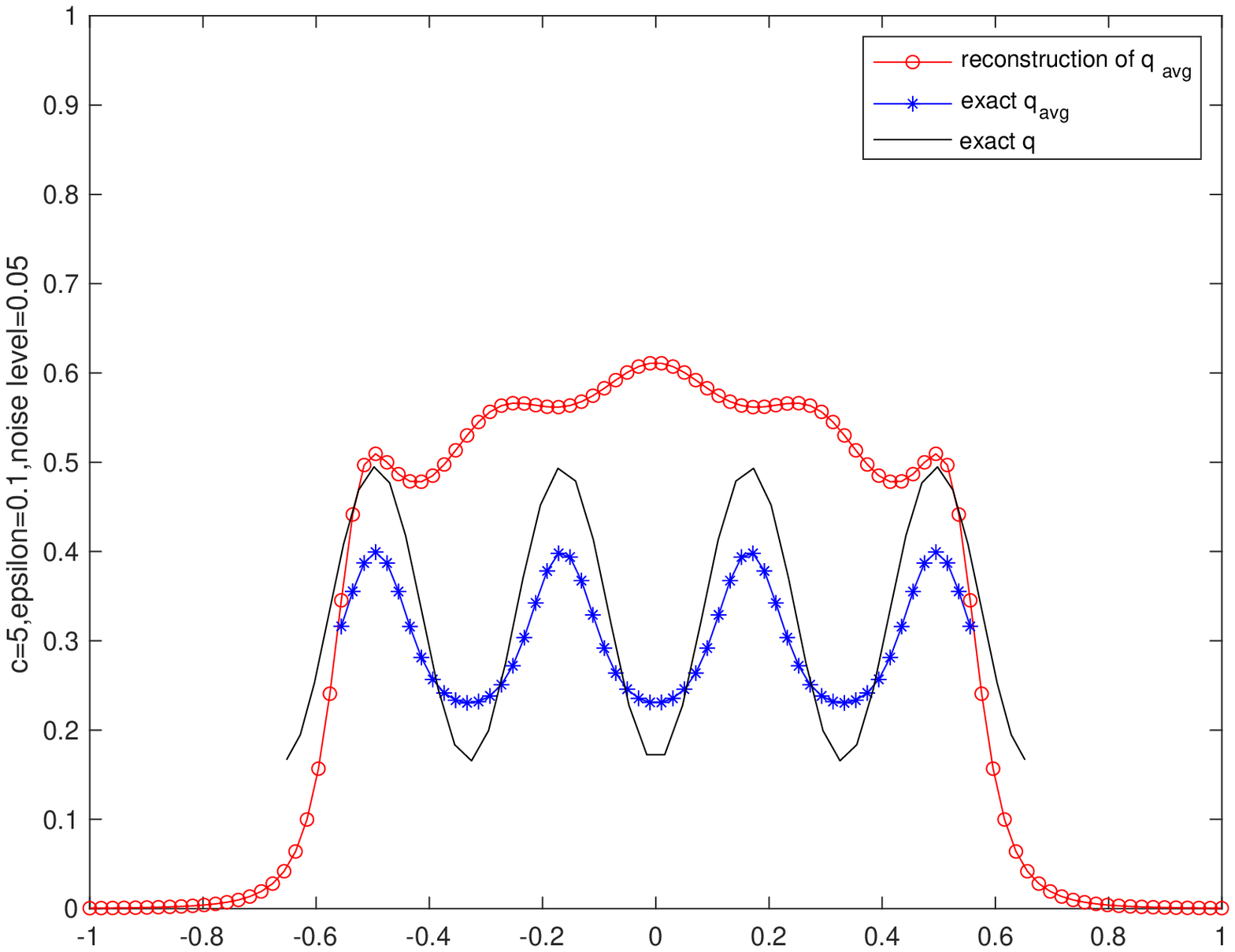}\\
\includegraphics[width=0.4\linewidth]{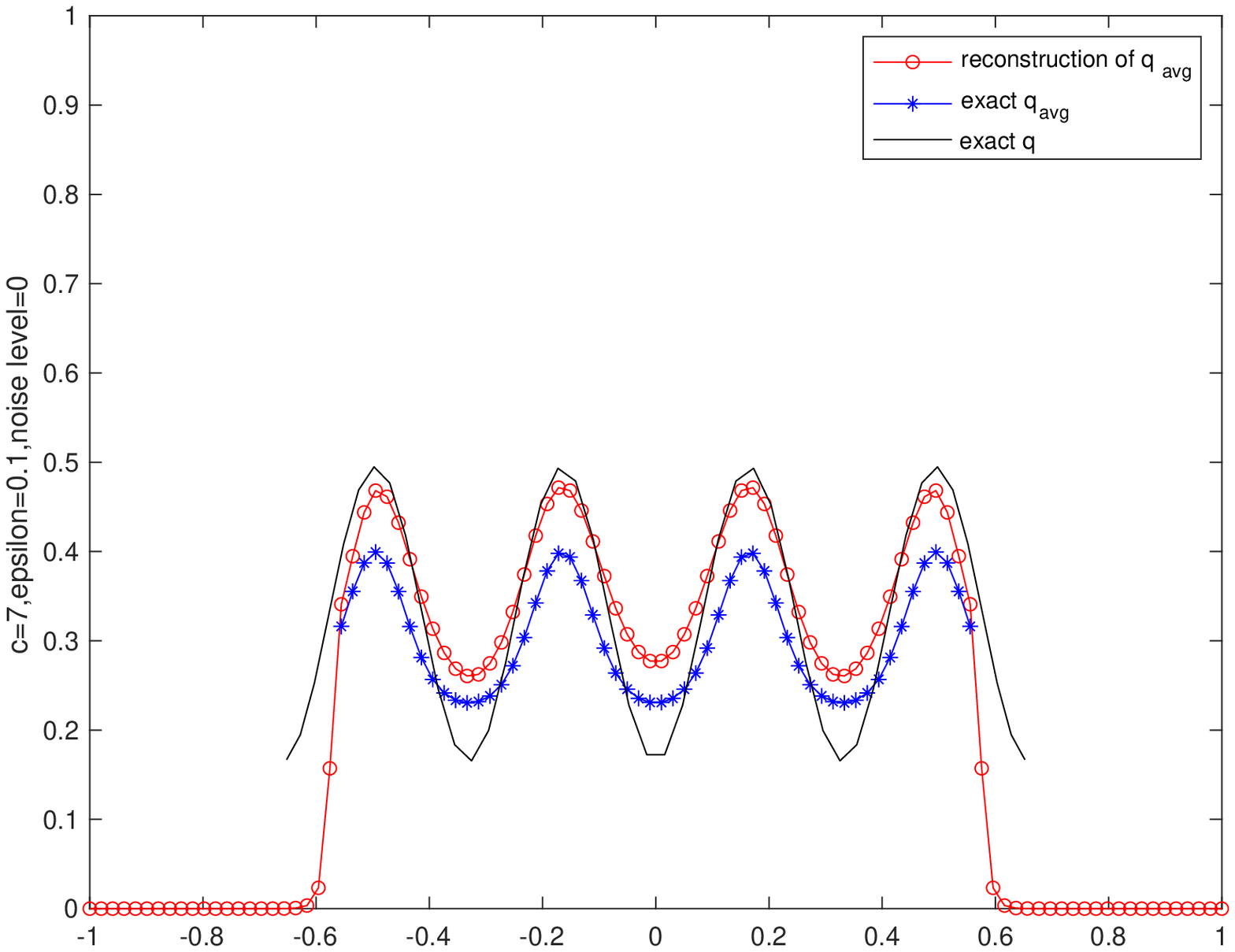}
\includegraphics[width=0.4\linewidth]{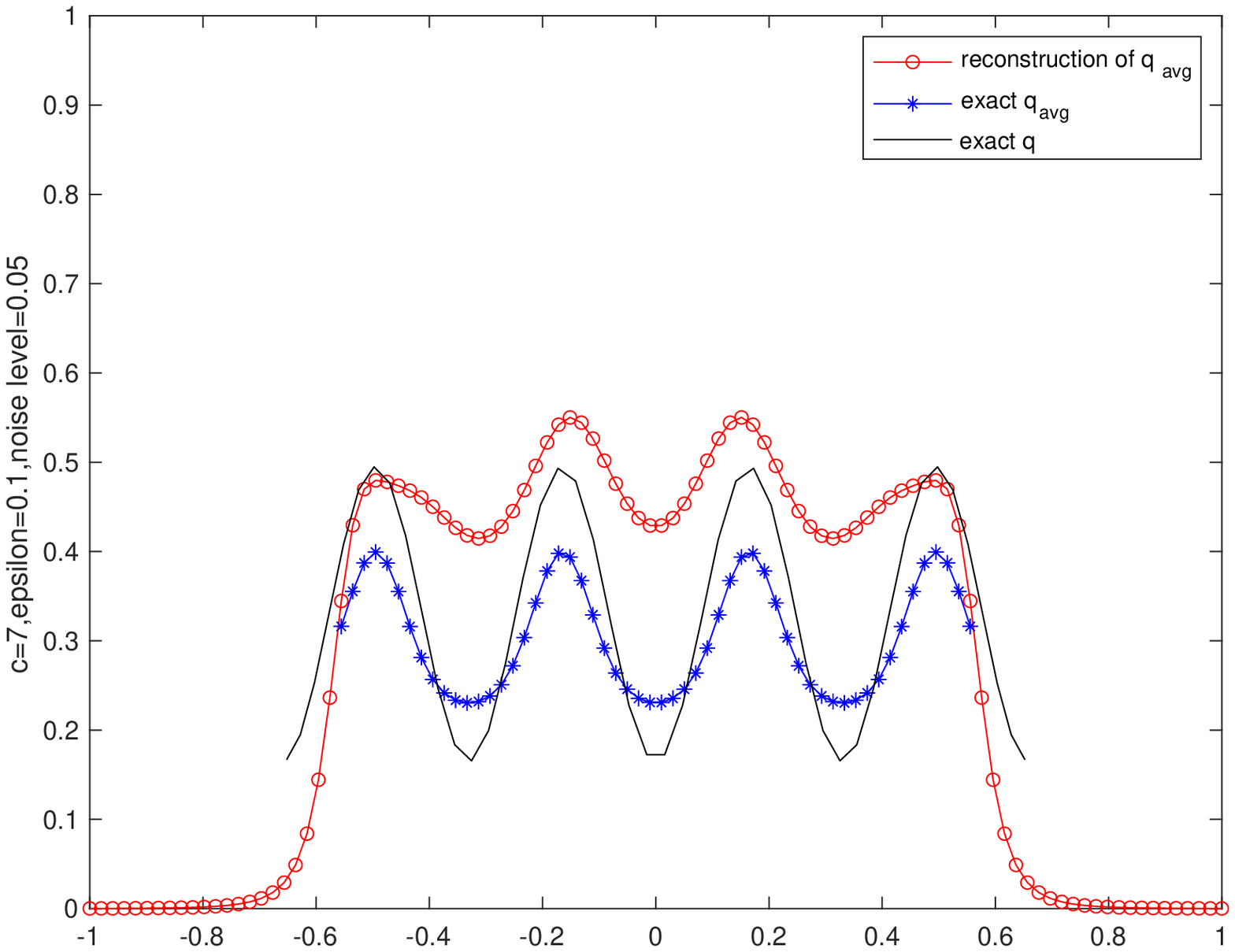}\\
\includegraphics[width=0.4\linewidth]{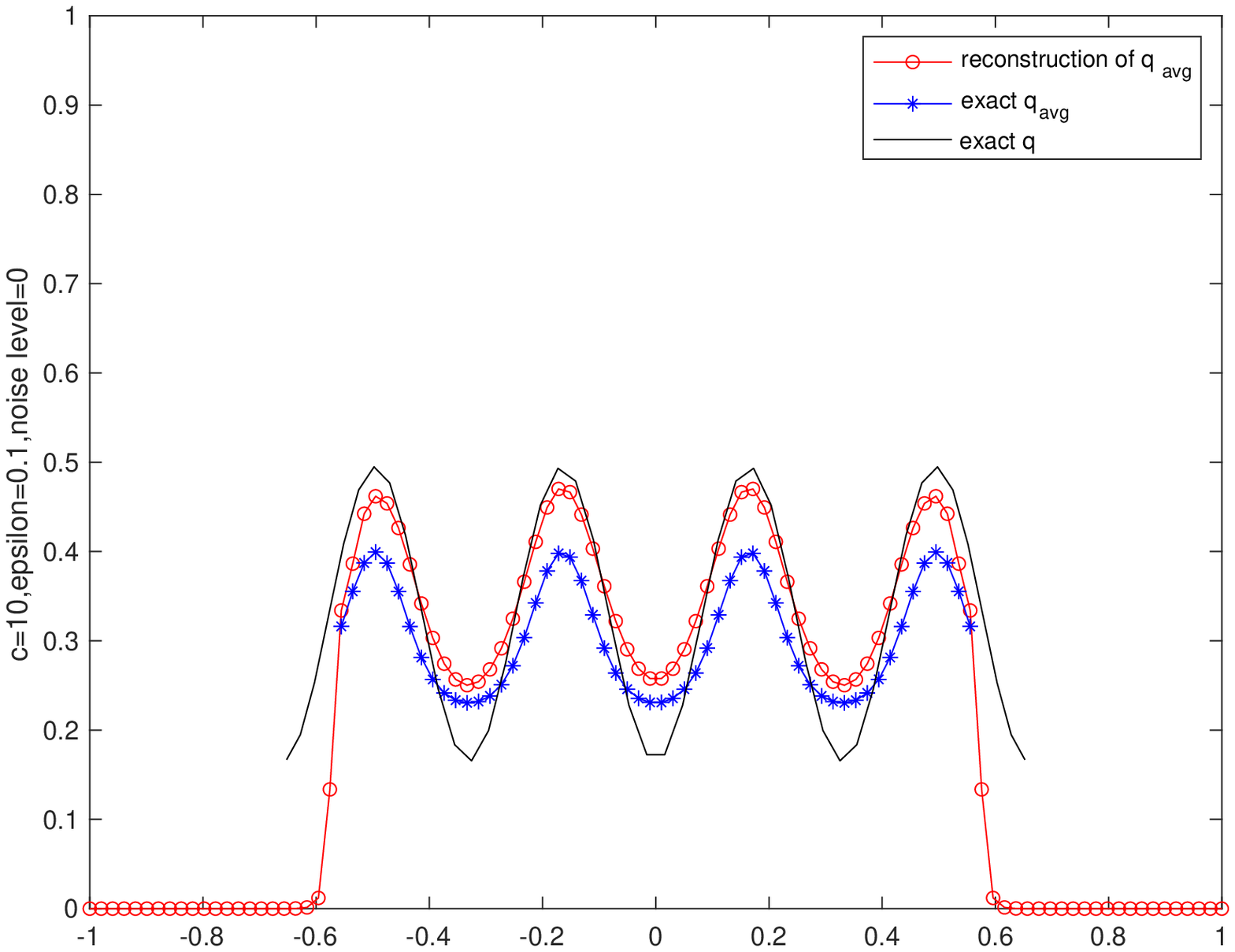}
\includegraphics[width=0.4\linewidth]{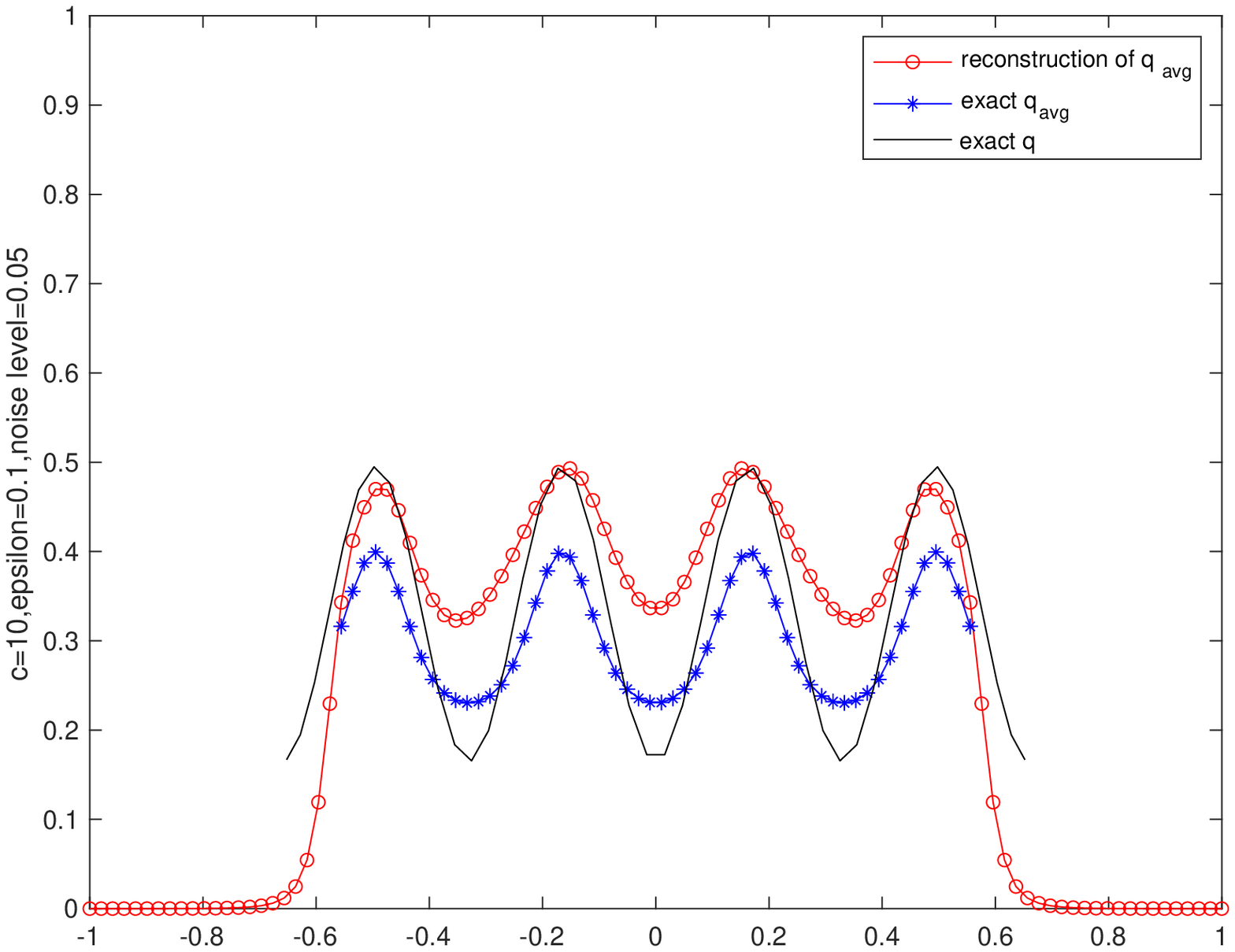}
     \caption{
     \linespread{1}
Plot of $I(z)$, $q_{avg}$, and $q$ for noiseless data (left column) and noisy data with noise level $\delta = 5\%$ (right column). $\epsilon=1\times 10^{-1}$.  From  top to  bottom $c=3,5,7,10$.
} \label{Section numerical examples subsection parameter figure c 3 5 7 10 noiseless noisy}
    \end{figure}
    
In the following we give  details of the implementation of the  prolate-Galerkin formulation of the linear sampling.
 The PSWFs are computed as detailed in Section \ref{Section PSWFs computation} where we use the Matlab code developed in \cite{boyd05code}. The prolate eigenvalues are then computed using the formulas  \eref{Section PSWFs computation lambda_n computation even} -- \eref{Section PSWFs computation lambda_n computation odd} by adding a simple Matlab script to the existing code of \cite{boyd05code}. The exact data are calculated analytically by hand  using the four different $q$ given by \eref{Section numerical example q type 1}--\eref{Section numerical example q type 4}. Given a noisy operator $\mathcal{N}^\delta$, we obtain a noisy data matrix $A^{\mathbbm{J},\delta}$ according to \eref{Section reduced LSM matrix A def} so that
\begin{equation*}  
A^{\mathbbm{J},\delta}_{j\ell} = \langle \mathcal{N}^{\delta} \psi_j(\cdot,;c),  \psi_\ell(\cdot,;c) \rangle, \quad  j,\ell \in \mathbbm{J},
\end{equation*}
where the noisy data are given by adding Gaussian noise to the exact data which introduce $\delta$ noise level such that 
$
\| A^{\mathbbm{J},\delta} - A^{\mathbbm{J}}\|_2  =\delta$.
We integrate the product of the data and the PSWFs using a Legendre-Gauss-Lobatto (LGL) quadrature rule.  The right hand side is given by \eref{Section reduced LSM matrix A def} and \eref{Section FM and a LSM phi_z PSWF expansion} where
 $$
 \phi^{\mathbbm{J}}_{z\ell}=  \lambda_\ell(c) \langle E_z,  \psi_\ell(\cdot,;c) \rangle_{B(z,\epsilon)}, \quad  \ell \in \mathbbm{J},
 $$
 here we approximate the integral over $R(z,\epsilon)$ with $R(z,\epsilon) = B(z,\epsilon)=(z-\epsilon,z+\epsilon)$ using again the Legendre-Gauss-Lobatto (LGL) quadrature rule in this interval $(z-\epsilon,z+\epsilon)$. 
Having a regularized solution $\{g^{{\mathbbm{J}},\delta}_{z,\alpha,j}\}_{j \in {\mathbbm{J}}}$ computed from the noisy linear system (as is similar to the noiseless case \eref{Section reduced LSM linear system})
\begin{equation*} %\label{Section reduced LSM linear system noisy}
\sum_{j\in\mathbbm{J}} A^{\mathbbm{J},\delta}_{j\ell} g^{\mathbbm{J},\delta}_{z,\alpha,j} \sim \phi^{\mathbbm{J}}_{z\ell}, \quad \ell \in \mathbbm{J},
\end{equation*}
we then proceed by
$$
\mathcal{S} g^{{\mathbbm{J}},\delta}_{z,\alpha} = \sum_{j\in\mathbbm{J}} \overline{\lambda_j(c)} g^{{\mathbbm{J}},\delta}_{z,\alpha,j} \psi_j(\cdot;c) \mbox{ so that } \langle \mathcal{S} g^{{\mathbbm{J}},\delta}_{z,\alpha}, 1_{B(z,\epsilon)}  \rangle = \sum_{j\in\mathbbm{J}} \overline{\lambda_j(c)} g^{{\mathbbm{J}},\delta}_{z,\alpha,j} \langle\psi_j(\cdot;c), 1_{B(z,\epsilon)}  \rangle_{_{B(z,\epsilon)}}
$$
 and approximate each integral $\langle\psi_j(\cdot;c), 1_{B(z,\epsilon)}  \rangle_{_{B(z,\epsilon)}}$  over $(z-\epsilon,z+\epsilon)$ using again the Legendre-Gauss-Lobatto (LGL) quadrature rule. We further set the indicator function by
$$
I(z):=\big(\langle \mathcal{S} g^{{\mathbbm{J}},\delta}_{z,\alpha}, 1_{B(z,\epsilon)}  \rangle \big)^{-1},
$$
which is an harmonic mean, as expected the numerical examples confirm the well known fact that harmonic mean are larger than the classical mean.
We chose in this paper the spectral cut off regularization where we chose $\mathbbm{J}$ such that all the corresponding prolate eigenvalues (with indexes in $\mathbbm{J}$) are larger that the noise level $\delta$.

First we illustrate in figure \ref{Figure Matrix PSWF vs DFT} the fact that PSWFs compressed the data operator where we compare the operator expressed in PSWFs basis which is the matrix $A^{\mathbbm{J}}$ with the operator $\mathcal{N}$ discretize on a cartesian grid and computed using Discrete (fast) Fourrier Transform which is the matrix  $A^{DFT}$.

Motivated by the particular example in Section \ref{Section numerics subsection explicit example},
it is likely that we need a large index set ${\mathbb{J}}$ to achieve a sufficiently good approximation to the unknown. This first motivates us to perform  a set of numerical examples with noiseless data where a large index set ${\mathbb{J}}$  can be available  (the  index set ${\mathbb{J}}$  cannot be too large  since this means that we have to compute many   prolate eigenvalues which is computationally challenging due to the super fast exponential decay of the prolate eigenvalues; see also the particular example studied in Section \ref{Section numerics subsection explicit example}).

In figure \ref{Section numerical examples subsection parameter figure noiseless}, we plot   the indicator function $ I(z) = [\langle \mathcal{S} g^{ {\mathbbm{J}},\delta}_{z,\alpha}, 1_{B(z,\epsilon)}  \rangle]^{-1}$ with noiseless data, $\epsilon=0.05$, and two different $c$. Here $c=20$ (left column) allows us to have an index set ${\mathbbm{J}}$ of dimension $37$   and $c=40$ (right column) allows us to have an index set ${\mathbbm{J}}$ of dimension $54$.   We approximate all integrals using a Legendre-Gauss-Lobatto (LGL) quadrature rule with $100$ quadrature nodes. The Matlab command ``ro-'' line represents the indicator function $I(z)$, which is an approximation (according to Theorem \ref{Section parameter LSM theorem}) to 
$$
q_{avg}(z):= \big( \langle 1/q, E_z\rangle_{L^2(B(z,\epsilon))} \big)^{-1}.
$$ 
It is seen that $q_{avg}$ is an approximation of $q$, the Matlab command ``b*-'' line represents $q_{avg}(z)$ (plotted only in $(-r+\epsilon, r-\epsilon)$) and the Matlab command ``k-'' line represents the exact ${q}$ (plotted only in $(-r, r)$). The four types of parameters are given by setting $r=0.66$ (which is roughly $2/3$) in \eref{Section numerical example q type 1}--\eref{Section numerical example q type 4} and we set $m=4$ to have $4$ oscillations in the case of oscillatory $q$.  It is observed that the larger the index set ${\mathbbm{J}}$, the better the convergence. 

To further demonstrate its viability, we report the results in Figure \ref{Section numerical examples subsection parameter figure c20 noisy} by changing the noise level to $5\%$ for the case when $c=20$. The robustness of LSM with respect to noises is observed.

The next set of examples in Figure \ref{Section numerical examples subsection parameter figure c 3 5 7 10 noiseless noisy} is devoted to testing the performance with respect to different $c$ for both noiseless and noisy data. 
We observe that larger $\epsilon$ is expected to give better convergence for parameter identification, thereby  in order to observe a possible convergence  for noisy data, we set $\epsilon=1\times 10^{-1}$ in this set of examples. In the case of noisy data we set noisy level $\delta=5  \times 10^{-2}$. In these examples, we apply the LGL quadrature rule with $100$ quadrature nodes and we report that all the results hold similarly with $40$ quadrature nodes.  In the case of noiseless data, we observe  similar performance with respect to  different $c$; in the case of noisy data, we observe that  the performance becomes better as $c$ becomes larger.

    \begin{figure}[tbhp]
      \centering
\includegraphics[width=0.4\linewidth]{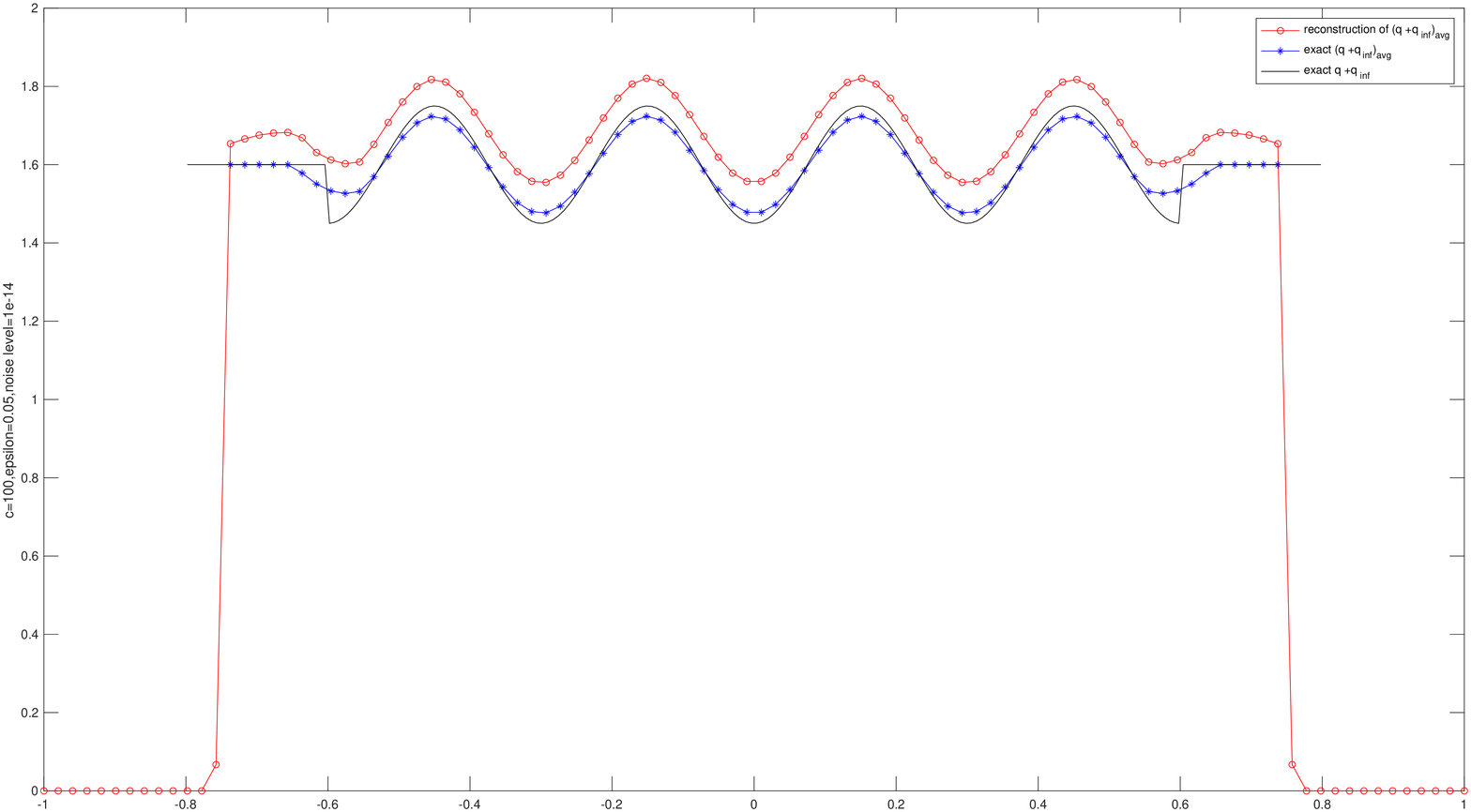}
\includegraphics[width=0.4\linewidth]{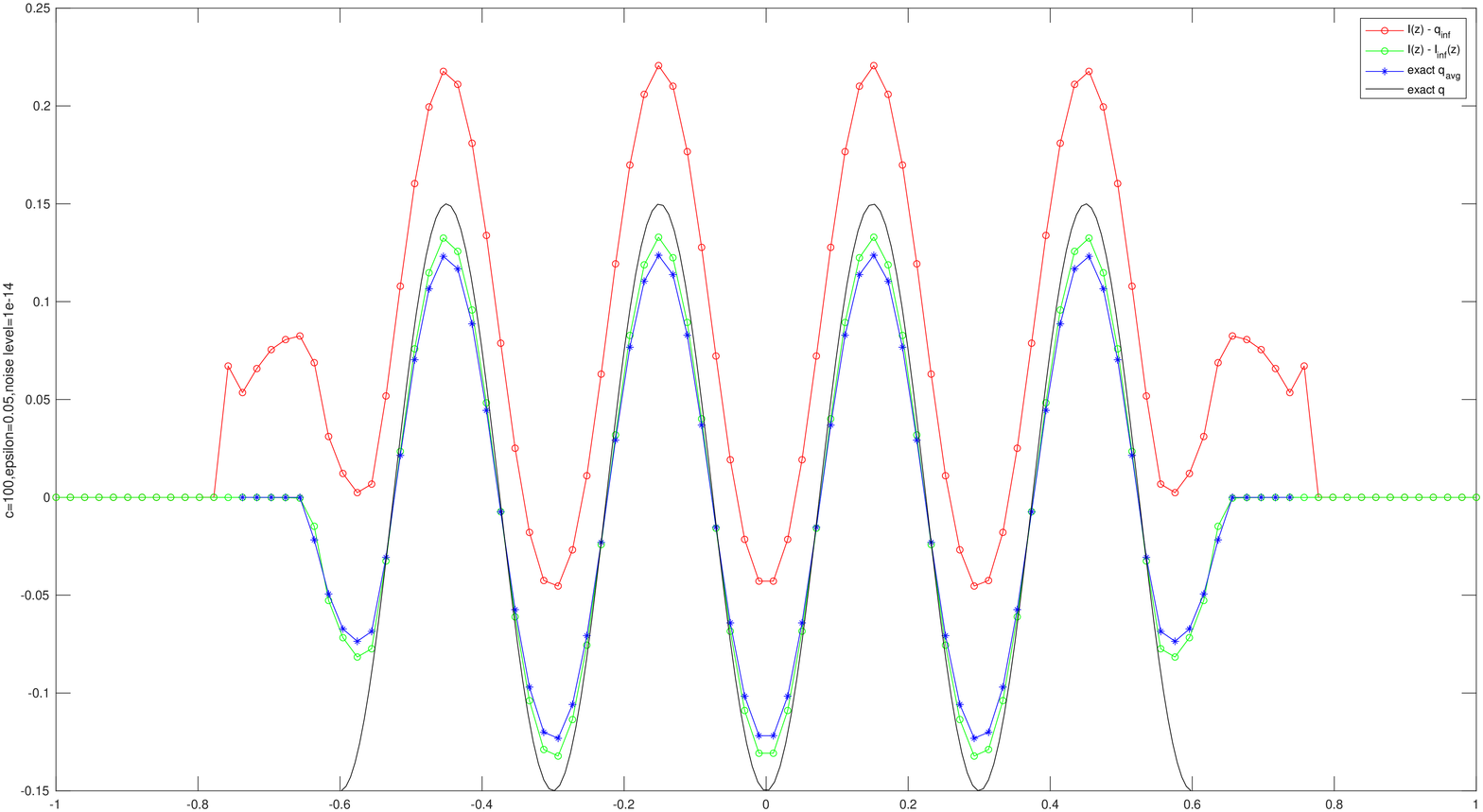}\\
     \caption{
     \linespread{1}
Plot of $I(z)$,  $q+q_{inf}$ on the left and  $I(z)-q_{inf}(z)$, $I(z)-I_{inf}(z)$, $q$ on the right and noiseless data with $\epsilon=5\times 10^{-1}$ and $c=40$.}
 \label{Section numerical examples sign section}
    \end{figure}

To illustrate the case of sign changing parameter we report the results in Figure \ref{Section numerical examples sign section}. First we show the reconstruction of $q+q_{inf} 1_D$ for $D$ of radius 0.8 where $\Omega$ is of radius 0.6 which are similar to the one from the previous examples. Then we show the comparison between $I(z)-q_{inf}(z)$ and $I(z)-I_{inf}(z)$ and as expected by remark \ref{remark num sign} the superiority of the second reconstruction similarly to our inspiration from \cite{DLSM}.

To conclude this numerical section we would like to report the numerical results for disjoint supports.   Figure \ref{Section numerical examples sign section 2 supports} shows the results for the case of a constant $q$ with 2 connected component; the distance between the two connected components is from the set $\{1.16, 0.08, 0.06, 0.04, 0.02, 0.01\}$. This case is covered by the theory but the results are very promising and theoretical analysis of the resolution limit of our method is an interesting subject for future work. 

    \begin{figure}[tbhp]
      \centering
\includegraphics[width=0.4\linewidth]{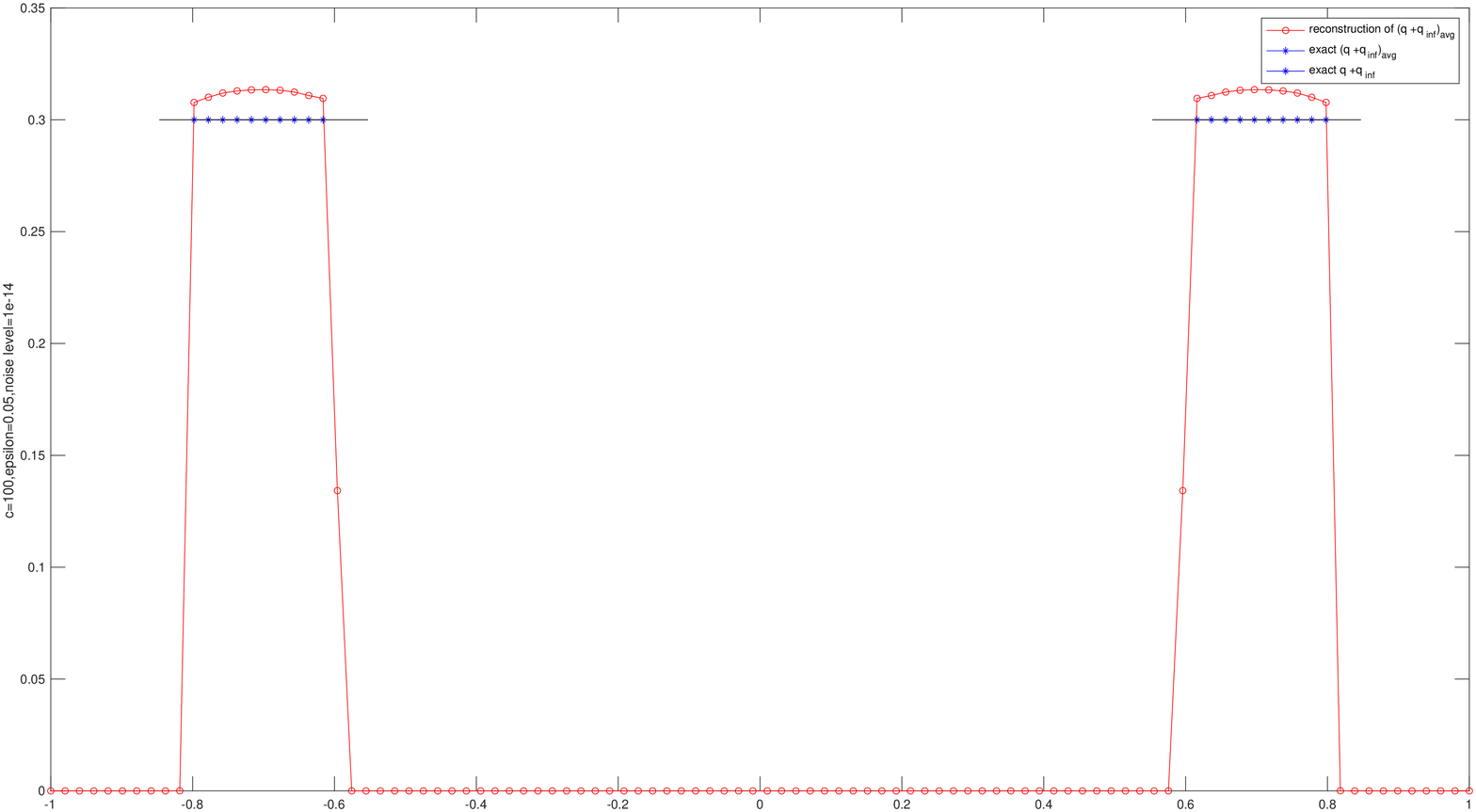}
\includegraphics[width=0.4\linewidth]{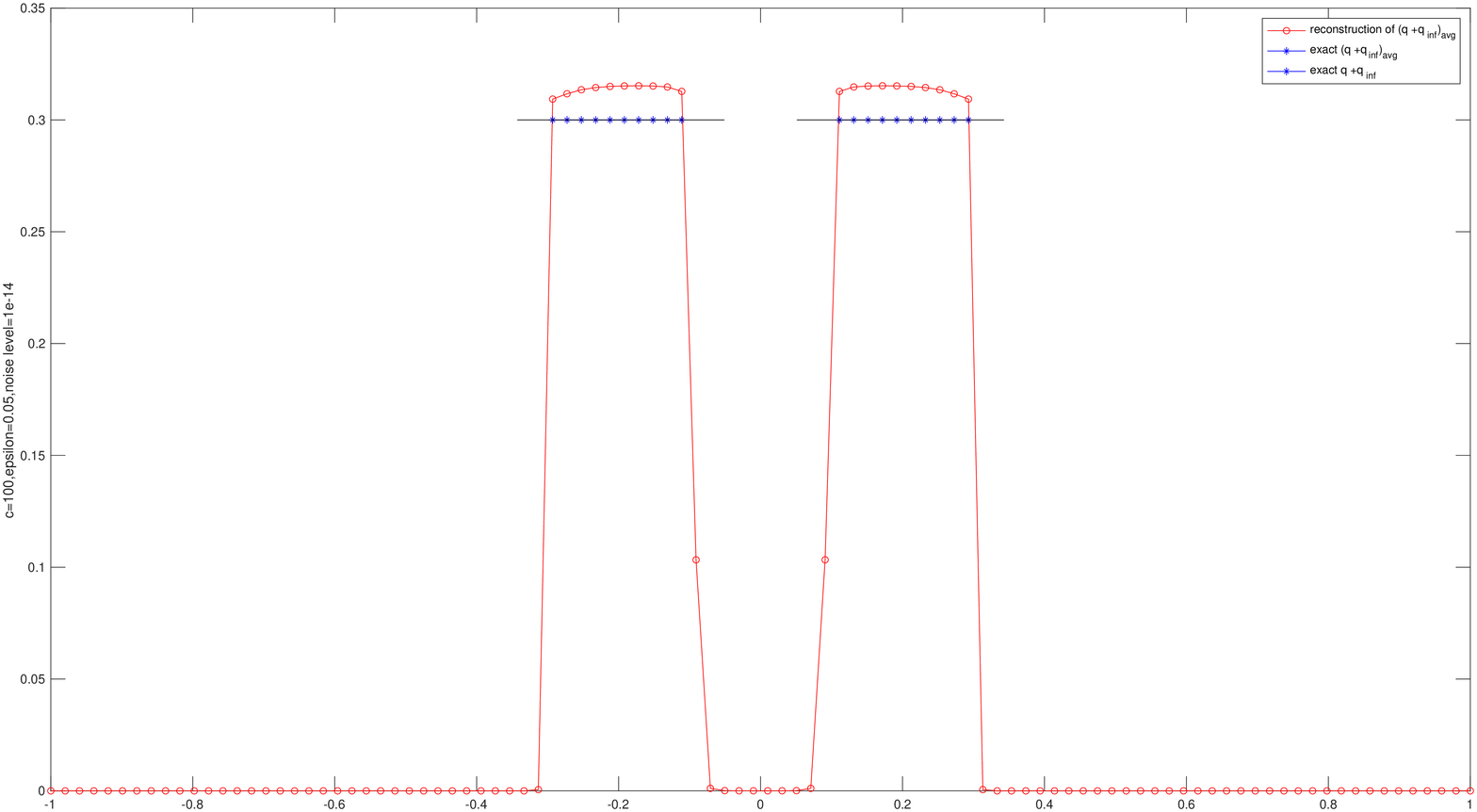}\\
\includegraphics[width=0.4\linewidth]{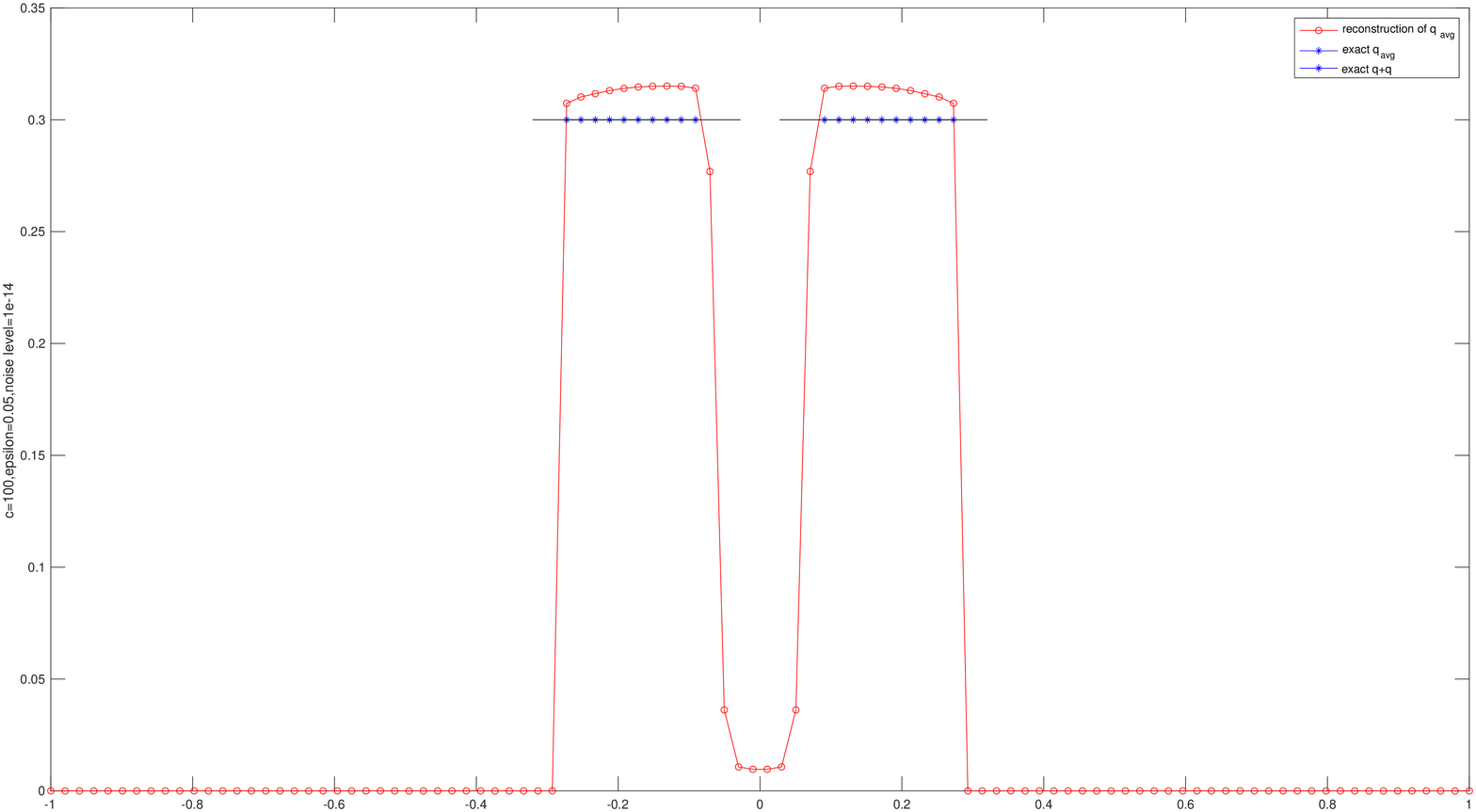}
\includegraphics[width=0.4\linewidth]{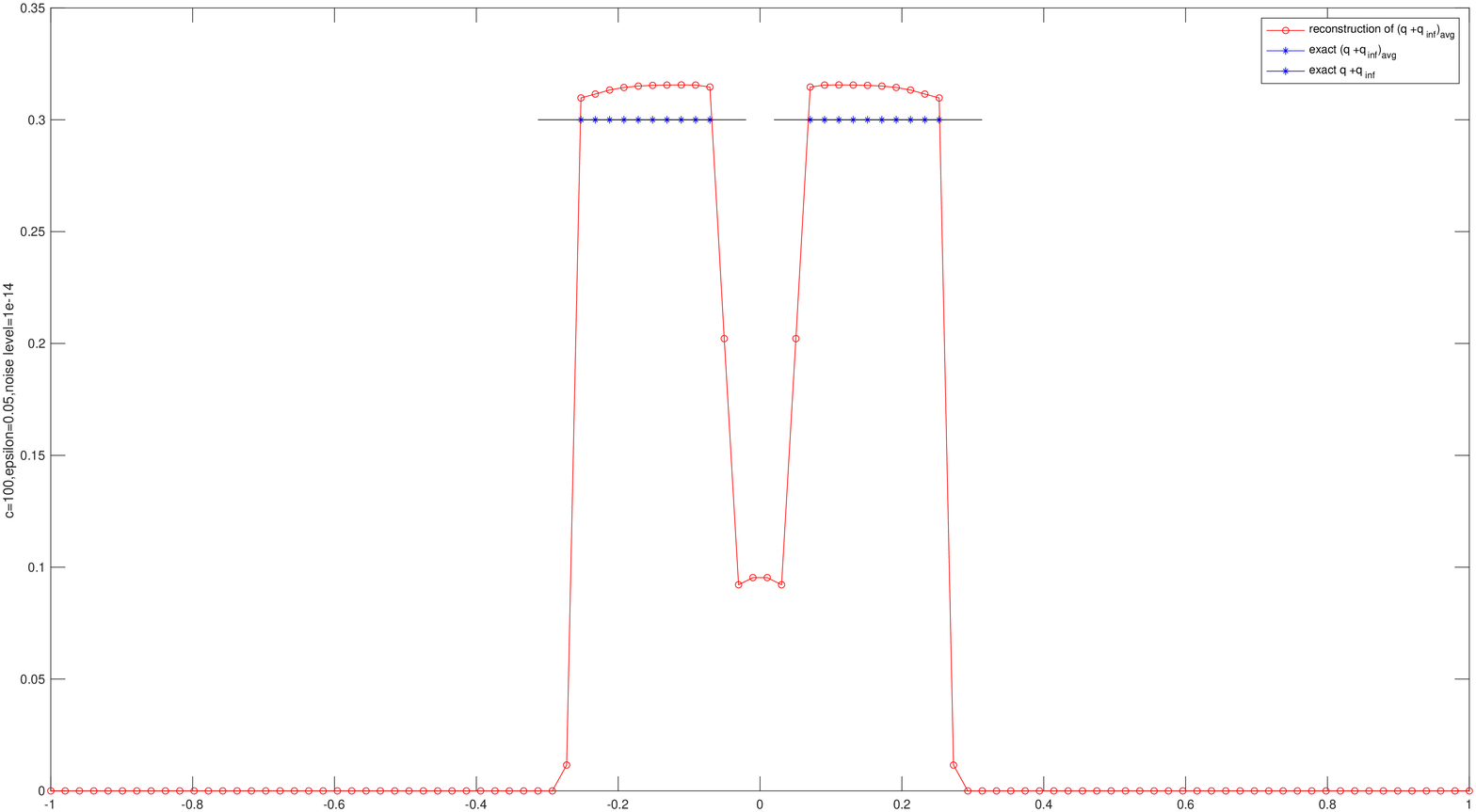}\\
\includegraphics[width=0.4\linewidth]{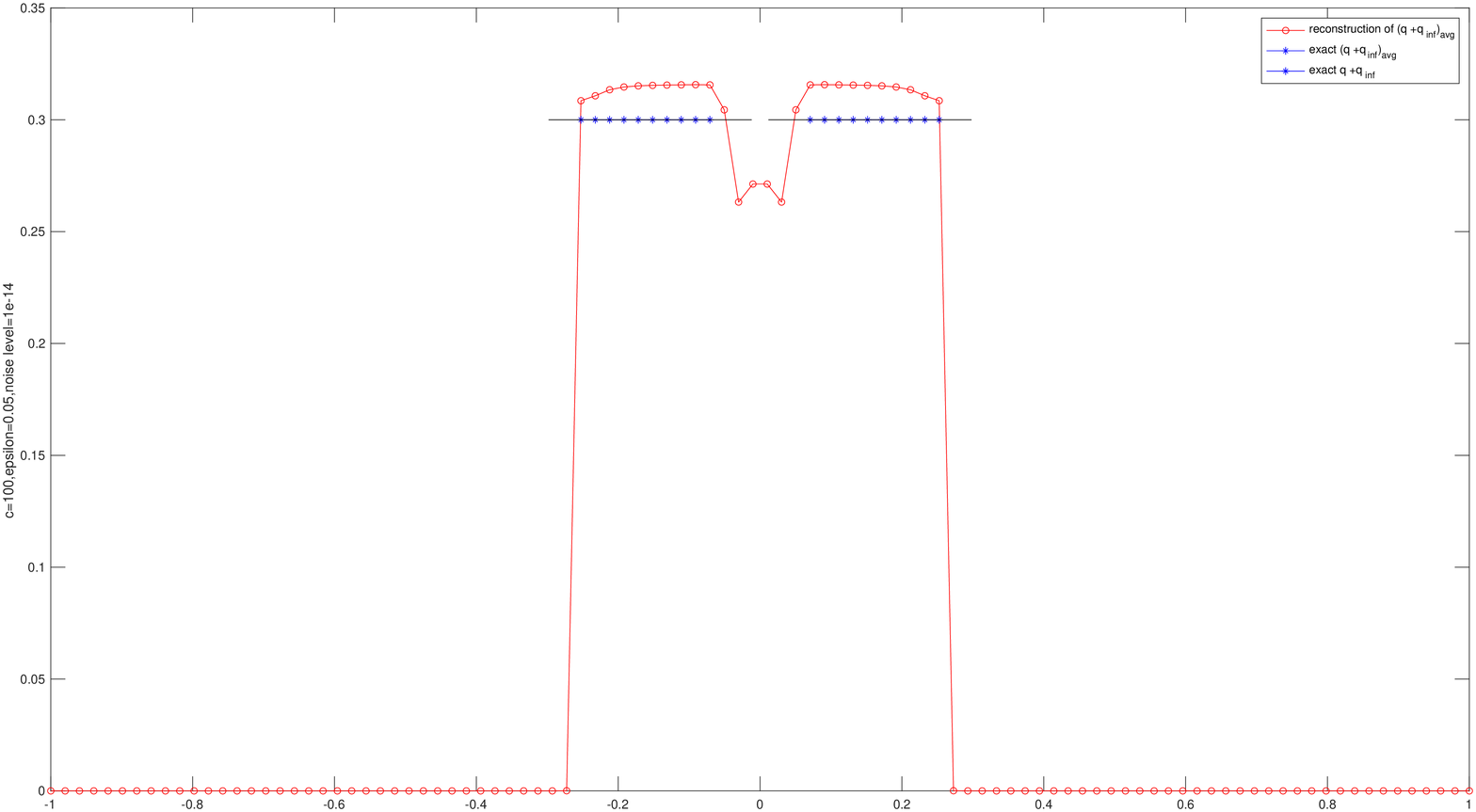}
\includegraphics[width=0.4\linewidth]{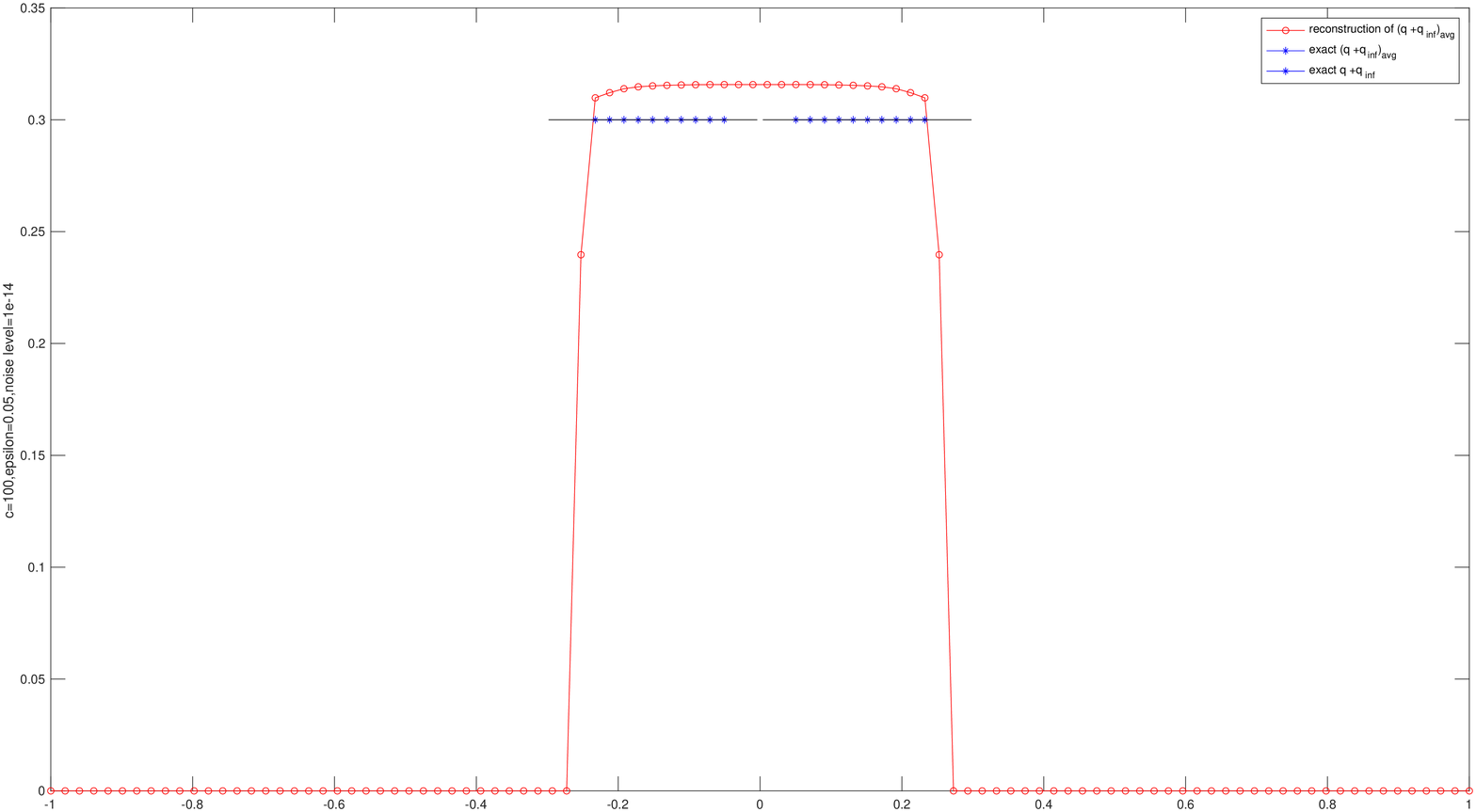}\\
     \caption{
     \linespread{1}
Plot of $I(z)$, $q_{avg}$, and $q$ for noiseless data and components getting closer and closer. $\epsilon=5\times 10^{-2}$ and $c=100$.}
 \label{Section numerical examples sign section 2 supports}
    \end{figure}

\section*{Acknowledgement}
The authors greatly thank Prof. Fioralba Cakoni, Prof. Bojan Guzina and Prof. Houssem Haddar for discussing this subject and encouraging us to work together.

\section*{References}
\bibliographystyle{plain}

\begin{thebibliography}{}

 \bibitem{arens04}
T~Arens.
Why linear sampling works,
{\it Inverse Problems \bf20}, 163--173, 2004.

 \bibitem{arenslechleiter09}
T~Arens and A~Lechleiter.
The linear sampling method revisited,
{\it Jour. Integral Equations and Applications \bf21}, 179--202, 2009.

 \bibitem{arenslechleiter15}
T~Arens and A~Lechleiter.
Indicator functions for shape reconstruction related to the linear sampling method,
{\it  SIAM J. Imaging Sci. \bf8}, no. 1, 513--535, 2015.

\bibitem{GLSMsignChange}
L~Audibert.
The Generalized Linear Sampling and factorization methods only depends on the sign of contrast on the boundary
{\it Inverse Probl. Imaging \bf 11}, no. 6, pp.~ 1107--1119, 2017. 

\bibitem{DLSM}
L~Audibert, A~Girard and H~ Haddar.
Identifying defects in an unknown background using differential measurements
{\it Inverse Probl. Imaging \bf 9}, no. 3, pp.~ 625--643, 2015. 

 
 
\bibitem{audiberthaddar15}
L~Audibert  and H~Haddar.
A generalized formulation of the linear sampling method with exact characterization of targets in terms of far field measurements,
{\it Inverse Problems \bf30}, 035011, 2015.


 \bibitem{bourgeois2008linear}
L~Bourgeois and E~Lun{\'e}ville.
\newblock The linear sampling method in a waveguide: a modal formulation.
\newblock {\it Inverse problems \bf24}(1), 015018, 2008.

 
  \bibitem{boyd05code}
J~  Boyd. 
Algorithm 840: computation of grid points, quadrature weights and derivatives for spectral element methods using prolate spheroidal wave functions—prolate elements,
{\it ACM Trans. Math. Software \bf31}, no. 1, 149--165, 2005.
  
  
\bibitem{cakoni2016qualitative}
F~Cakoni and D~Colton.
\newblock {\it Qualitative Approach to Inverse Scattering Theory}, Springer, 2016.

\bibitem{cakoni2016inverse}
F~Cakoni, D~Colton and H~Haddar.
\newblock {\it Inverse Scattering Theory and Transmission Eigenvalues},   CBMS-NSF, SIAM Publications {\bf98}, 2nd Edition, 2023.

\bibitem{ColtonKirsch96}
D~Colton and A~Kirsch. A simple method for solving inverse scattering problems in the resonance region,
{\it Inverse Problems \bf12}, pp. 383--393, 1996.

\bibitem{colton2012inverse}
D~Colton and R~Kress.
\newblock {\it Inverse Acoustic and Electromagnetic Scattering Theory}, Springer Nature, New York, 2019.


\bibitem{GriesmaierSchmiedecke-source}
R~Griesmaier and C~Schmiedecke.
A factorization method for multi-frequency inverse source problems with sparse far field measurements,
{\it SIAM J. Imag. Sci. \bf10}, pp.~2119--2139, 2017.

\bibitem{novikov22}
M~Isaev and R~Novikov.
Reconstruction from the Fourier transform on the ball via prolate spheroidal wave functions, {\it Journal de Math\'{e}matiques Pures et Appliqu\'{e}es \bf 163}, pp.318--333, 2022.


\bibitem{Kirisitsetal2021}
C~Kirisits, M~Quellmalz, M~Ritsch-Marte,O~Scherzer, E~Setterqvist, G~Steidl.
Fourier reconstruction for diffraction tomography of an object rotated into arbitrary orientations, {\it 
Inverse Problems \bf 11}, 115002,  2021.

\bibitem{Kirsch98} A~Kirsch.
	Characterization of the shape of a scattering obstacle using the spectral data of the far-field operator,
	{\it Inverse Problems \bf14}, pp. 1489--1512, 1998.


\bibitem{Kirsch17} A~Kirsch.
Remarks on the Born approximation and the Factorization Method,
{\it Applicable Analysis \bf 96}, no.~1, pp.~70--84, 2021.

\bibitem{Kirsch21} A~Kirsch.
 {\it An Introduction to the Mathematical Theory of Inverse Problems}, Springer Nature, Switzerland AG, 2021.
 
\bibitem{kirsch2008factorization}
A~Kirsch and N~Grinberg.
\newblock {\it The Factorization Method for Inverse Problems}, Oxford University Press, Oxford, 2008.

\bibitem{meng2021}
S~Meng.
A sampling type method in an electromagnetic waveguide.
\newblock {\it Inverse Probl. Imaging \bf15}(4), 745--762, 2021.


\bibitem{meng22}
S~Meng. Single mode multi-frequency factorization method for the inverse source problem in acoustic waveguides, 
{\it SIAM J. Appl. Math. \bf 83} (2), 394--417, 2023.

\bibitem{meng23data}
S~Meng. Data-driven basis for reconstructing the contrast in  inverse  scattering: Picard criterion, regularity, regularization, and stability, {\it SIAM J. Appl. Math. \bf83} (5), 2003--2026, 2023.

\bibitem{moskowSchotland08}
S~Moskow and J~Schotland. Convergence and stability of the inverse Born series for diffuse waves,
{\it Inverse Problems \bf24},  065004, 2008.

\bibitem{natterer2001book}
F~Natterer.
\newblock{\it The Mathematics of Computerized Tomography}, SIAM, 2001.


\bibitem{quarteroni2000book}
A~Quarteroni, R~Sacco and F~Saleri.
\newblock{\it Numerical Mathematics}, Springer, 2000.

\bibitem{quellmalzetal2023}
M~Quellmalz, P~Elbau, O~Scherze and G~Steidl.
Motion Detection in Diffraction Tomography by Common Circle Methods, arXiv preprint, arXiv:2209/08086.

\bibitem{Slepian61} D~Slepian and H~ Pollak. Prolateate Spheroidal Wave Functions, Fourier Analysis and Uncertainty -I, {\it Bell System Tech. J. \bf40}, pp. 43--64, 1961.  

\bibitem{Simons1} D~Simons, and D~Wang. Spatiospectral concentration in the Cartesian plane, {\it GEM - International Journal on Geomathematics \bf2}, pp 1--36, 2011.


\bibitem{Slepian64} D~Slepian. Prolateate Spheroidal Wave Functions, Fourier Analysis and Uncertainty -IV: Extensions to Many Dimensions; Generalized Prolate Spheroidal Functions, {\it Bell System Tech. J. \bf43}, pp. 3009--3057, 1964.  

\bibitem{Slepian78}
D~Slepian.
Prolate spheroidal wave functions, Fourier analysis, and uncertainty V: The discrete case,
{\it Bell System Tech. J. \bf57}, pp.1371--1430, 1978.


\bibitem{wang10}
L~Wang.
 Analysis of spectral approximations using prolate spheroidal wave functions,
{\it Math. Comp. \bf79}(270):807--827, 2010.

\bibitem{ZLWZ20}
J~Zhang,  H~Li, L-L~Wang and Z~Zhang.
Ball prolate spheroidal wave functions in arbitrary dimensions, 
{\it Appl. Comput. Harmon. Anal. \bf48}, no. 2, pp.~ 539--569, 2020. 

 

\end{thebibliography}

\end{document}